\documentclass[11pt]{amsart}

\usepackage[margin=1.25in]{geometry}
\usepackage[utf8]{inputenc}
\usepackage[T1]{fontenc}
\usepackage{lmodern}
\usepackage{microtype}
\usepackage{needspace}

\usepackage{amsmath,amssymb,amsthm,mathtools}
\usepackage{mathrsfs}
\usepackage{enumitem}
\usepackage[colorlinks=true,
            linkcolor=blue,
            citecolor=blue,
            urlcolor=blue]{hyperref}

\numberwithin{equation}{section}

\newtheorem{theorem}{Theorem}[section]
\newtheorem{proposition}[theorem]{Proposition}
\newtheorem{lemma}[theorem]{Lemma}
\newtheorem{corollary}[theorem]{Corollary}

\theoremstyle{definition}
\newtheorem{definition}[theorem]{Definition}

\theoremstyle{remark}
\newtheorem{remark}[theorem]{Remark}

\newcommand{\C}{\mathbb C}
\newcommand{\T}{\mathbb T}

\newcommand{\dd}{\,d}
\newcommand{\e}{\mathrm e}

\newcommand{\esssup}{\operatorname*{ess\,sup}}

\newcommand{\Lpqa}[3]{L_{#1}^{#2,#3}}
\newcommand{\Fpqa}[3]{\mathcal F_{#1}^{#2,#3}}

\newcommand{\norm}[1]{\left\lVert #1\right\rVert}
\newcommand{\abs}[1]{\left\lvert #1\right\rvert}

\title[Fock projections between mixed-norm spaces]
{Fock Projections between Gaussian Mixed-Norm Spaces:
Full-Range Boundedness and Compactness}

\author{Xiang Fang}
\address{National Yang Ming Chiao Tung University, Taiwan}
\email{xfang@nycu.edu.tw}

\author{Pham Trong Tien}
\address{VNU University of Science, Vietnam National University, Hanoi}
\email{phamtien@vnu.edu.vn}

\subjclass[2020]{Primary 30H20, 47B38; Secondary 46E30, 42B35, 32A15}

\keywords{Fock projection, Bargmann projection, mixed-norm Fock space,
quasi-Banach space,   drifted Gaussian sequence operator,
Berezin transform}

\date{\today}

\begin{document}
%================================================

\begin{abstract}
The Fock--Bargmann projection \(P_t\) is initially an orthogonal projection on a Gaussian Hilbert space. We characterize the boundedness and compactness of
\[
P_t:L_{\alpha}^{p_1,q_1}\longrightarrow \mathcal F_{\beta}^{p_2,q_2}
\]
and of its associated maximal projection
\[
P_t^+:L_{\alpha}^{p_1,q_1}\longrightarrow L_{\beta}^{p_2,q_2}
\]
for all \(t,\alpha,\beta>0\) and \(0<p_1,q_1,p_2,q_2\le\infty\). Set
\[
\Delta(t;\alpha,\beta):=t^2-2\beta t+\alpha\beta,
\qquad
d(p,q):=\frac1q-\frac1p.
\]
For both operators, boundedness holds exactly when \(p_1,q_1\ge1\) and either
\(\Delta(t;\alpha,\beta)<0\), or
\(\Delta(t;\alpha,\beta)=0\) with
\[
q_1\le q_2,
\qquad
d(p_2,q_2)\le d(p_1,q_1).
\]
Both operators are compact exactly when \(p_1,q_1\ge1\) and either
\(\Delta(t;\alpha,\beta)<0\), or
\(\Delta(t;\alpha,\beta)=0\) with both of the above boundary inequalities strict. Thus boundedness forces the source exponents into the Banach range, whereas when \(\Delta(t;\alpha,\beta)<0\), every target exponent is admissible, including quasi-Banach ones. Annular discretization reduces the problem to drifted Gaussian sequence operators, whose comparison with diagonal sequence operators yields the endpoint criteria and quantitative tail estimates. A dilation--multiplier transfer gives corresponding results for Gaussian-twisted Fock operators, including Berezin transforms.
\end{abstract}

\maketitle
%\tableofcontents

%================================================
\section{Introduction and main results}
\label{sec:introduction}
%================================================

%------------------------------------------------
\subsection{Analytic projections and the Fock-space setting}
\label{subsec:analytic-projections}
%------------------------------------------------

A basic structural problem in analytic function-space theory is to
determine when the orthogonal projection onto a holomorphic subspace,
initially defined in a Hilbert space, extends boundedly to the
surrounding non-Hilbert scales. Such an extension is more than an
estimate for a reproducing kernel: it identifies the analytic space as
a canonically complemented subspace and provides the projection
mechanism underlying reproducing formulas, duality, and the mapping
theory of analytic operators. The Riesz--Szegő and Bergman projections
are the classical models. Their \(L^p\), weighted \(L^p\), and
geometric theories show that projection boundedness records a delicate
interaction among holomorphy, the ambient measure, and the underlying
geometry; see
\cite{Duren1970,DurenSchuster2004,HedenmalmKorenblumZhu2000} and, for
more recent developments,
\cite{CalziPeloso2023,DengHuangZhaoZheng2001,
GreenWagner2024,LanzaniStein2012}.

\smallskip
\noindent
The Fock, or Bargmann, projection is the corresponding analytic
projection in the Gaussian setting; see
\cite{Folland1989,Zhu2012}. Its geometry differs essentially from that
of the circle or the unit disc. The underlying domain is the whole
complex plane, the reproducing kernel grows exponentially, and
Gaussian weights provide the compensating confinement at infinity.
Consequently, once the kernel parameter and the source and target
Gaussian weights are allowed to vary independently, boundedness is
governed by an exact balance between exponential kernel growth and
Gaussian decay. Mixed-norm spaces, introduced by Benedek and Panzone
\cite{BenedekPanzone1961}, refine this problem further by separating
angular integrability on circles from radial aggregation across the
Gaussian measure. The resulting off-diagonal problem therefore
measures simultaneously the transfer of Gaussian decay and of two
distinct forms of integrability.

\smallskip
\noindent
This formulation is also the natural setting for compactness. An
idempotent projection onto an infinite-dimensional subspace cannot be
compact on the same ambient space, whereas a projection acting between
different Gaussian or mixed-norm scales may be compact precisely when
it produces a strict gain. Recent work has developed the mixed-norm
boundedness theory of Fock projections and obtained the complete
off-diagonal classification when all source and target exponents lie
in the Banach range; see
\cite{Liu2024,LiuShiWei2027}. That theory does not decide whether
quasi-Banach source exponents are admissible, does not cover
quasi-Banach target spaces, and does not provide the corresponding
full compactness classification. These are not merely formal
extensions: below exponent one the usual convexity arguments are no
longer available, while at exact Gaussian balance the problem is
controlled by finer angular and radial effects that are suppressed
when a strict Gaussian margin is present.

\smallskip
\noindent
The purpose of this paper is to settle the boundedness and compactness
problems over the entire exponent scale. The resulting classification
has a simple structural form. Boundedness forces both source exponents
to lie in the Banach range. Subject to this necessary condition, a
strict Gaussian margin permits every target exponent, including the
full quasi-Banach range. At exact Gaussian balance, boundedness is
governed by two sharp comparisons between the angular and radial
exponents, and compactness holds exactly when both comparisons become
strict. Thus the original seven-parameter mapping problem reduces to
an interior--boundary dichotomy together with two endpoint
inequalities. The same classification is stable under suitable
dilations and Gaussian multipliers, leading also to results for
Gaussian-twisted Fock operators, Berezin transforms, and more general
Fock-kernel operators.

%------------------------------------------------

\subsection{Mixed-norm setting and main results}
\label{subsec:mapping-problem}

Mixed norms distinguish angular integrability on each circle from the radial accumulation of those means. This distinction disappears in the classical diagonal case $p=q$, but becomes decisive for off-diagonal projection problems.
In this paper, we equip \(\T\) with normalized Lebesgue measure
\(\frac{\dd\theta}{2\pi}\) and, for a measurable function \(f\) on \(\C\)
and \(r>0\), define its \(p\)-th integral mean by
\[
M_p(f,r)
:=
\begin{cases}
\displaystyle
\left(
\frac{1}{2\pi}
\int_0^{2\pi}
\abs{f(re^{i\theta})}^p\,\dd\theta
\right)^{\frac{1}{p}},
& 0<p<\infty, \\[4mm]
\displaystyle
\esssup_{0\leq\theta<2\pi}
\abs{f(re^{i\theta})},
& p=\infty.
\end{cases}
\]
Let \(\alpha>0\) and \(0<p,q\leq\infty\). For a measurable function
\(f\) on \(\C\), set
\[
\norm{f}_{p, q, \alpha}
:=
\begin{cases}
\displaystyle
\left(
q\alpha
\int_0^\infty
M_p(f,r)^q
\e^{-\frac{q\alpha}{2}r^2}
r\,\dd r
\right)^{\frac{1}{q}},
& 0<q<\infty, \\[4mm]
\displaystyle
\esssup_{r>0}
M_p(f,r)\e^{-\frac{\alpha}{2}r^2},
& q=\infty.
\end{cases}
\]
The Gaussian mixed-norm space \(\Lpqa{\alpha}{p}{q}\) consists of all
measurable functions \(f\) on \(\C\) for which
\(
\norm{f}_{p, q, \alpha}<\infty.
\)
The corresponding mixed-norm Fock space is
\[
\Fpqa{\alpha}{p}{q}
:=
\Lpqa{\alpha}{p}{q}\cap H(\C),
\] 
where \(H(\C)\) denotes the space of entire functions on \(\C\). We equip \(\Fpqa{\alpha}{p}{q}\) with the inherited quasi-norm. When \(p=q\),
\(\mathcal F_\alpha^{p,p}\) coincides isometrically, under the present
normalization, with the classical Fock space \(\mathcal F_\alpha^p\); we write
\[
\norm{f}_{p,\alpha}
:=
\norm{f}_{p,p,\alpha}.
\] 

\medskip
\noindent
For \(t>0\), let
\[
\dd\lambda_t(w)
:=
\frac{t}{\pi}\e^{-t|w|^2}\,\dd A(w),
\]
where \(\dd A\) denotes Lebesgue area measure on \(\C\). The Hilbert
Fock space \(\mathcal F_t^2\) is a closed subspace of
\(
L_t^2:=L^2(\C,\dd\lambda_t)
\)
with reproducing kernel
\[
K_t(z,w):=\e^{tz\overline w}.
\]
The orthogonal projection \(P_t:L_t^2\to F_t^2\) is given by
\[
(P_tf)(z)
:=
\int_{\mathbb C}
K_t(z,w)f(w)\,d\lambda_t(w),
\]
and the associated maximal Fock projection is
\[
(P_t^+f)(z)
:=
\int_{\mathbb C}
\abs{K_t(z,w)}f(w)\,d\lambda_t(w).
\]

\medskip
\noindent
For the off-diagonal problem considered below, we initially interpret
these kernel formulas on bounded compactly supported measurable
functions. We determine when they define bounded or compact operators
\[
P_t,\ P_t^+:
L_\alpha^{p_1,q_1}
\longrightarrow
L_\beta^{p_2,q_2},
\]
where
\[
t,\alpha,\beta>0,
\qquad
0<p_1,q_1,p_2,q_2\leq\infty.
\]
Thus the problem is off-diagonal in the Gaussian parameters, the
angular exponents, and the outer radial exponents.

%------------------------------------------------
\medskip
\noindent
For \(t,\alpha,\beta>0\) and \(0<p,q\leq\infty\), set
\[
\Delta(t;\alpha,\beta)
:=
t^2-2\beta t+\alpha\beta,
\qquad
d(p,q)
:=
\frac1q-\frac1p,
\]
with the convention \(\frac{1}{\infty}=0\). 
We refer to
\(
\Delta(t;\alpha,\beta)<0
\)
as the \emph{strict Gaussian region}, and to
\(
\Delta(t;\alpha,\beta)=0
\)
as the \emph{Gaussian boundary}.
\begin{theorem}[Boundedness classification]
\label{thm:main-boundedness}
Let \(t,\alpha,\beta>0\) and
\(
0<p_1,q_1,p_2,q_2\leq\infty.
\)
Then the following assertions are equivalent:
\begin{enumerate}[label=\textup{(\roman*)}]
\item
The maximal Fock projection \(
P_t^+:
L_\alpha^{p_1,q_1}
\longrightarrow
L_\beta^{p_2,q_2}
\) is bounded.

\item
The Fock projection \(
P_t:
L_\alpha^{p_1,q_1}
\longrightarrow
L_\beta^{p_2,q_2}\) is bounded.

\item
One has \(p_1,q_1\geq1\), and either
\[
\Delta(t;\alpha,\beta)<0,
\]
or
\[
\Delta(t;\alpha,\beta)=0,
\qquad
q_1\leq q_2,
\qquad
d(p_2,q_2)\leq d(p_1,q_1).
\]
\end{enumerate}
\end{theorem}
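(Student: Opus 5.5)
The proof follows the cycle (iii)\(\Rightarrow\)(i)\(\Rightarrow\)(ii)\(\Rightarrow\)(iii). The step (i)\(\Rightarrow\)(ii) is immediate: the pointwise bound \(\abs{P_tf}\le P_t^+\abs{f}\) holds, and the mixed norms are lattice norms.

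For the sufficiency (iii)\(\Rightarrow\)(i), I discretize in the radial variable.

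1. Decompose \(\C\) into annuli \(A_k=\{k\le |w|^2<k+1\}\).
2. Expand \(\abs{K_t(z,w)}=\e^{t|z||w|\cos(\theta-\varphi)}\) in angular Fourier modes, or more crudely bound its angular behaviour. The relevant quantities are the circle-averages of \(\e^{t r s\cos\theta}\), which behave like \((rs)^{-1/2}\e^{trs}\).
3. Use the Young-type angular convolution estimate on \(\T\): the \(L^{p_1}\to L^{p_2}\) norm of convolution with \(\e^{trs\cos\theta}\) is comparable to \(\e^{trs}(rs)^{-\gamma}\), with \(\gamma=\frac12(1-\frac1{p_1}+\frac1{p_2})\) when \(p_2\ge p_1\). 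The Laplace-type width of the kernel, of order \((rs)^{-1/2}\), gives the power.
4. This reduces \(P_t^+\) to a drifted Gaussian sequence operator \(T a(j)=\sum_k \e^{t\sqrt{jk}-\cdots}(jk)^{-\gamma}\cdots a_k\), acting from weighted \(\ell^{q_1}\) to weighted \(\ell^{q_2}\).
5. Complete the square in the exponent:
\[
-\tfrac{\beta}{2}j+t\sqrt{jk}-tk+\tfrac{\alpha}{2}k .
\]
When \(\Delta<0\), this is at most \(-c(\sqrt j-\sqrt k)^2-\varepsilon(j+k)\). The factor \(\e^{-\varepsilon(j+k)}\) absorbs every polynomial and every exponent loss, including \(q_2<1\) through \(\ell^{q_2}\subset\) weighted sums. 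Hölder in \(k\) uses \(q_1\ge1\) and \(p_1\ge1\), the latter being needed for the angular step. This gives boundedness for all targets.
6. When \(\Delta=0\), the exponent is exactly \(-c(\sqrt j-\sqrt k)^2\). The kernel then concentrates near \(k\approx \lambda j\) for a fixed ratio \(\lambda=\lambda(t,\alpha,\beta)\), with window width \(\sqrt j\), so one must compare it with a diagonal operator.
7. Summing the Gaussian over the window of size \(\sim\sqrt j\) gives a factor \(j^{1/2}\), or \(j^{1/2\cdot(1-1/q_1)}\) after Hölder. Combining this with the angular factor \(j^{-\gamma}\) and the measure normalizations, the net power of \(j\) is \(d(p_2,q_2)-d(p_1,q_1)\), up to bookkeeping.
8. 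A diagonal-type operator from \(\ell^{q_1}\) to \(\ell^{q_2}\) with multiplier \(j^{d_2-d_1}\) is bounded iff \(q_1\le q_2\) and \(d(p_2,q_2)\le d(p_1,q_1)\). The Gaussian off-diagonal spread is controlled by a Schur/Young convolution argument in the variable \(\sqrt j-\sqrt k\).

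This comparison is the step I expect to be the main obstacle: the exact exponent bookkeeping at the boundary, and handling \(q_2<1\) or \(p_2<p_1\). In the case \(p_2<p_1\) I would simply use the nesting \(M_{p_2}\le M_{p_1}\) on circles.

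For necessity (ii)\(\Rightarrow\)(iii), I test on explicit functions.

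1. \textbf{Source exponents in the Banach range.} Take \(f\) supported in a tiny disc or thin sector near a point \(w_0\), with a \(\delta\)-normalized indicator. The output \(P_tf\approx\) (mass of \(f\))\(\cdot K_t(\cdot,w_0)\), while \(\norm{f}\) scales with an \(L^{p_1}\) or \(L^{q_1}\) norm. If \(p_1<1\), angular concentration makes the ratio blow up; if \(q_1<1\), radial concentration does the same. Since \(P_t\) of a compactly supported function is entire, only the mass survives.
2. \textbf{Condition \(\Delta\le0\).} Test on \(f=\chi_{A_k}\overline{w}^{\,n}\)-type monomial pieces, or directly on \(f=K_s(\cdot,a)\e^{\cdots}\) localized near \(|w|^2\approx k\). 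Then \(P_tf\) is a multiple of \(z^n\), and the ratio of norms grows like \(\e^{c\,n}\) when \(\Delta>0\). Concretely, optimize \(\norm{z^n}_{\beta}\cdot\)(Gaussian integral)\(/\norm{f}_\alpha\) by Stirling.
3. \textbf{Boundary inequalities when \(\Delta=0\).} For \(q_1\le q_2\), use sums \(\sum_k a_k f_k\) of radially separated normalized blocks whose images are almost orthogonal in the radial direction. This embeds \(\ell^{q_1}\to\ell^{q_2}\) and forces \(q_1\le q_2\). For \(d(p_2,q_2)\le d(p_1,q_1)\), use single blocks with angular oscillation: take \(f=\overline{w}^n\) restricted to the annulus \(|w|^2\approx n/\alpha\), compared with a block concentrated in angle. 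The powers of \(n\) then match the diagonal multiplier, so boundedness forces the exponent inequality.

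Steps 6–8 of the sufficiency argument, and item 3 of the necessity argument, rely on the same sequence-operator comparison, which is why I would prove that comparison lemma first.
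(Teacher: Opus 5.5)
\textbf{Verdict: genuine gap.} Your sufficiency half is essentially the paper's argument. The gap is in the necessity of $d(p_2,q_2)\le d(p_1,q_1)$.

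\textbf{Sufficiency.} You use the angular Young inequality with $\norm{\e^{a\cos\theta}}_{L^\rho(\T)}\lesssim \e^{a}(1+a)^{-1/(2\rho)}$, the nesting $M_{p_2}\le M_{p_1}$ when $p_2\le p_1$, completing the square, and comparison of a drifted Gaussian sequence operator with a diagonal one. The paper indexes by unit shells in $\abs{w}$ rather than $\abs{w}^2$. This makes the kernel $\e^{-\frac{\beta}{4}\abs{\ell-\tau k}^2}$ uniformly summable and avoids your windows of length $\sqrt{j}$. The obstacle you flag, $q_2<1$ at the Gaussian boundary, never arises: there (iii) forces $q_2\ge q_1\ge 1$.

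\textbf{The gap.} Your tests do not establish $d(p_2,q_2)\le d(p_1,q_1)$.
\begin{itemize}
\item Monomial-type tests cannot detect it. Since $M_p(w^n h(\abs{w}),r)=r^n\abs{h(r)}$ and $M_p(z^n,r)=r^n$ do not depend on $p$, such functions and their images are blind to $p_1,p_2$.
\item As written, your block $\overline{w}^{\,n}h(\abs{w})$ is annihilated by $P_t$ for $n\ge1$. The $z^m$-coefficient of $P_tf$ is $\frac{t^m}{m!}\int f\,\overline{w}^{\,m}\,d\lambda_t$, and $\overline{w}^{\,n+m}h(\abs{w})$ has zero angular mean. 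The same slip breaks your test for $\Delta\le 0$, which works only with $w^n$.
\item The ``block concentrated in angle'' is never specified.
\item Reading off the exponent from the sequence-operator comparison cannot work. The shell estimate behind that comparison is only an upper bound for $P_t^+$. It gives no lower bound for $P_t$, which has cancellation.
\item Naive angular blocks are not sharp. A disc of radius $\delta/R$ centred at $\abs{w_0}=R$ has image $\approx(\text{weighted mass})\cdot K_t(\cdot,w_0)$ on $\abs{z}\lesssim R$. At $\Delta=0$ this yields only $d(p_2,q_2)-d(p_1,q_1)\le 2-\frac{1}{p_1}-\frac{1}{q_1}$, and for larger blocks the image is no longer explicit.
\end{itemize}

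\textbf{The missing ingredient} is a coherent-state packet whose image under $P_t$ is exact. The paper tests on $f_w(z)=\e^{z\overline{w}}\e^{(\alpha-t)\abs{z}^2}$. Lemma~\ref{lem:fock-test-formulas} gives $P_tf_w=\frac{t}{2t-\alpha}\exp\bigl(\frac{t}{2t-\alpha}z\overline{w}\bigr)$. Apply the two-sided estimate $\norm{\e^{tz\overline{w}}}_{p,q,\alpha}\asymp\exp\bigl(\frac{t^2}{2\alpha}\abs{w}^2\bigr)(1+\abs{w})^{d(p,q)}$ in both source and target. When $\Delta(t;\alpha,\beta)=0$ the Gaussian factors cancel exactly, leaving $(1+\abs{w})^{d(p_2,q_2)-d(p_1,q_1)}\lesssim 1$.

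\textbf{Two smaller points.}
\begin{itemize}
\item For $q_1\le q_2$ you do not need almost-orthogonal sums of separated blocks, whose quasi-norm lower bounds would be delicate when $p_2<1$. The single function $w^n\e^{(\alpha-t)\abs{w}^2}$ already gives $(1+n)^{\frac{1}{2q_2}-\frac{1}{2q_1}}\lesssim 1$, using $\norm{z^n}_{p,q,\alpha}\asymp (n!/\alpha^n)^{1/2}(1+n)^{\frac{1}{2q}-\frac14}$.
\item For the source exponents, a tiny disc only gives $\frac{1}{p_1}+\frac{1}{q_1}\le 2$. You need sets thin in one polar variable only: a thin annulus for $q_1$ and a thin sector for $p_1$. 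Take the lower bound from the value at $z=0$ via Lemma~\ref{lem:point-evaluation}.
\end{itemize}
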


\begin{theorem}[Compactness classification]
\label{thm:main-compactness}
Let \(t,\alpha,\beta>0\) and
\(
0<p_1,q_1,p_2,q_2\leq\infty.
\)
Then the following assertions are equivalent:
\begin{enumerate}[label=\textup{(\roman*)}]
\item
The maximal Fock projection \(
P_t^+:
L_\alpha^{p_1,q_1}
\longrightarrow
L_\beta^{p_2,q_2}
\) is compact.

\item
The Fock projection \(
P_t:
L_\alpha^{p_1,q_1}
\longrightarrow
L_\beta^{p_2,q_2}\) is compact.

\item
One has \(p_1,q_1\geq1\), and either
\[
\Delta(t;\alpha,\beta)<0,
\]
or
\[
\Delta(t;\alpha,\beta)=0,
\qquad
q_1 <  q_2,
\qquad
d(p_2,q_2) < d(p_1,q_1).
\]
\end{enumerate}
\end{theorem}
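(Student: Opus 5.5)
The statement immediately above is Theorem~\ref{thm:main-compactness}. I will prove it by the implication scheme (i)$\Rightarrow$(ii)$\Rightarrow$(iii)$\Rightarrow$(i), using Theorem~\ref{thm:main-boundedness} throughout.

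\textbf{(i)$\Rightarrow$(ii).} Compactness of $P_t^+$ should pass to $P_t$ by domination. Since $|P_tf|\le P_t^+|f|$ pointwise, I will show that $P_t$ maps the unit ball to a relatively compact set. Images under $P_t$ of bounded sets are locally uniformly bounded families of entire functions. A sequence therefore has a locally uniformly convergent subsequence, and the tails $\int_{|z|>R}$ of the target quasi-norm are controlled uniformly by the tails of $P_t^+|f|$. Those tails are uniformly small because $P_t^+$ is compact; I will make this precise via the standard characterization of relatively compact sets in $L_\beta^{p_2,q_2}$ for the relevant lattice-type family. This works also in the quasi-Banach range, since only the $q_2$-triangle inequality and dominated tails are used.

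\textbf{(ii)$\Rightarrow$(iii).} A compact operator is bounded, so Theorem~\ref{thm:main-boundedness} gives $p_1,q_1\ge1$ and either $\Delta<0$, or $\Delta=0$ with the non-strict inequalities. It remains to exclude equality at the boundary. Assume $\Delta=0$. There are two equality cases.
\begin{itemize}
\item If $q_1=q_2$, I will test on normalized functions concentrated on far-away annuli, essentially radial bumps $f_n$ supported near $|w|=r_n\to\infty$ and normalized in $L_\alpha^{p_1,q_1}$. At balance the kernel exactly transports such a bump to a bump near a dilated radius $(t/\beta)r_n$, with norm bounded below. The images then have disjoint essential supports and norms $\gtrsim1$, while $f_n\to0$ weakly, or at least no subsequence of the images converges.
\item If $d(p_2,q_2)=d(p_1,q_1)$ (with $q_1<q_2$), I will instead use test functions $w^{\,n}\overline{\phantom{w}}$-type monomials $\bar w^{k}$ times Gaussian factors. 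Their images are monomials $z^{k}$, whose angular behaviour saturates the $d$-comparison, and the normalized images again stay separated.
\end{itemize}
In both cases I will use the annular discretization: reduce to the diagonal sequence model and note that the extremal sequences there are standard unit vectors, or blocks, escaping to infinity.

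\textbf{(iii)$\Rightarrow$(i).} I will show compactness by norm approximation of $P_t^+$ by finite-rank or truncated operators $P_t^+\chi_{\{|w|<R\}}$ composed with $\chi_{\{|z|<R'\}}$. On bounded regions the kernel is bounded, so these truncations are compact.
\begin{itemize}
\item For $\Delta<0$ there is a strict Gaussian margin, which yields an exponentially decaying tail $e^{-\varepsilon R^2}$ in the kernel estimate. The error operator therefore has norm $\to0$, even for quasi-Banach targets, by a discrete Schur/Hölder estimate on annuli with geometric summation.
\item For $\Delta=0$ with strict inequalities, the discretized operator is a drifted Gaussian sequence operator dominated by a diagonal operator. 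Compactness of diagonal maps between mixed sequence spaces $\ell^{q_1}(\ell^{p_1})\to\ell^{q_2}(\ell^{p_2})$ with polynomial weights holds exactly when the weights tend to $0$. The strict inequalities produce weights decaying like $k^{-\delta}$ in both the angular frequency and the radial index.
\end{itemize}

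The main obstacle is this last step. The comparison of the drifted Gaussian operator with the diagonal operator must preserve the tail decay, not only boundedness. I would first obtain an explicit operator-norm bound for the tail pieces $\{|w|>R\}$ that tends to $0$, reusing the quantitative estimates behind Theorem~\ref{thm:main-boundedness}.
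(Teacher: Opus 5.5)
The genuine gap is in (ii)$\Rightarrow$(iii): both of your test families fail for $P_t$. A radial function $\phi(\abs w)$ is mapped by $P_t$ to the constant $2t\int_0^\infty\phi(s)\e^{-ts^2}s\,ds$, because only the zeroth Fourier mode of $\e^{tz\overline w}$ survives the angular integration. For bumps normalized in $\Lpqa{\alpha}{p_1}{q_1}$ at radius $r_n\to\infty$, this constant tends to $0$ because $2t>\alpha$, so radial bumps give no obstruction to $q_1=q_2$. The transport to radius $\tau r_n$ that you describe is a feature of $P_t^+$, not of $P_t$. For $P_t$ the bump must carry the matching angular frequency, namely $w^n\e^{(\alpha-t)\abs w^2}$. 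By Lemma~\ref{lem:fock-test-formulas} its image is $\bigl(\frac{t}{2t-\alpha}\bigr)^{n+1}z^n$, which gives normalized image norms $\asymp(1+n)^{\frac1{2q_2}-\frac1{2q_1}}$ at balance.

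For the defect equality the situation is worse. The function $\overline w^{\,k}\psi(\abs w)$ is annihilated by $P_t$ when $k\ge1$. If you use $w^k$ instead, the images $z^k$ satisfy $M_p(z^k,r)=r^k$ for every $p$, so the test cannot see $p_1,p_2$ at all. It only yields $(1+n)^{\frac1{2q_2}-\frac1{2q_1}}\to0$, which is no obstruction once $q_1<q_2$, i.e.\ precisely in your case. For example, at $\Delta(t;\alpha,\beta)=0$ with $(p_1,q_1)=(2,1)$ and $(p_2,q_2)=(\infty,2)$ one has $d(p_2,q_2)=d(p_1,q_1)=\frac12$, yet monomials give decay $(1+n)^{-1/4}$. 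What is missing is an angularly concentrated family: the Gaussian-weighted kernels $\e^{z\overline w}\e^{(\alpha-t)\abs z^2}$. By \eqref{eq:exponential-kernel-norm}, their normalized images have norm $\asymp(1+\abs w)^{d(p_2,q_2)-d(p_1,q_1)}$ at balance while tending to $0$ locally uniformly. Corollary~\ref{cor:ambient-operator-compactness}(b) then forces strict inequality. The annular discretization cannot replace these explicit tests: it only bounds $P_t^+$ from above, and the kernel of $P_t$ has cancellation, so lower bounds for $P_t$ do not come from the diagonal sequence model.

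There is also a gap in (i)$\Rightarrow$(ii) when $q_2=\infty$. Relative compactness forces uniformly small target tails only for $q_2<\infty$ (Proposition~\ref{prop:ambient-compactness}). In $\Lpqa{\beta}{p_2}{\infty}$, even the single function $\e^{\beta\abs z^2/2}$ has tail quasi-norm $1$ beyond every radius. So compactness of $P_t^+$ gives no tail control on $P_t^+\abs f$ there, and your domination argument stalls. The paper instead truncates the source. With $Q_Nf=f\chi_{\{\abs w\ge N\}}$, one has $P_t^+Q_Nf\to0$ locally uniformly, uniformly over the unit ball, because $2t>\alpha$. Compactness of $P_t^+$ and Corollary~\ref{cor:ambient-operator-compactness}(b) then give $\norm{P_t^+Q_N}\to0$. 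Hence $\norm{P_t-P_{t,N}}\le\norm{P_t^+Q_N}\to0$ with each $P_{t,N}$ compact, and this works for every $q_2$.

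Your sufficiency outline is essentially the paper's: source truncation plus the tail bounds of Proposition~\ref{prop:drifted-gaussian-operator}, which is exactly the tail-preserving diagonal comparison you flag as the main obstacle. However, drop the angular-frequency diagonal model, since $L^{p}(\T)$ norms are not $\ell^{p}$ norms of Fourier coefficients unless $p=2$. The paper treats the angle by Young's inequality at exponent $\max\{1,p_2\}$, using $\norm{\e^{a\cos\theta}}_{L^\rho(\T)}\lesssim\e^a(1+a)^{-1/(2\rho)}$. This leaves a scalar radial sequence operator whose critical exponent $\sigma+\frac1{q_2}$ equals either $\frac1{q_2}-\frac1{q_1}$ or $d(p_2,q_2)-d(p_1,q_1)$.
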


%------------------------------------------------

\subsection{Relation with previous work and contributions}

The classical $L^p$ theory of the Fock and maximal Fock projections is
presented in \cite[Subsection~2.2]{Zhu2012}. Hou and Liu subsequently
studied the $L^p$-to-$L^q$ boundedness and compactness of both operators
for $1\leq p,q<\infty$; see \cite{HouLiu2023}. Boundedness on
mixed-norm spaces was considered by Liu \cite{Liu2024}. More recently,
Liu, Shi, and Wei obtained the complete off-diagonal boundedness
classification in the Banach mixed-norm range
\[
1\leq p_1,q_1,p_2,q_2\leq\infty;
\]
see \cite[Theorem~1.1]{LiuShiWei2027}. In this range,
Theorem~\ref{thm:main-boundedness} agrees with their result.

\smallskip
\noindent
Theorem~\ref{thm:main-boundedness} completes the boundedness
classification over the full exponent scale. It reveals a sharp
source--target asymmetry: boundedness itself forces
$p_1,q_1\geq1$, whereas in the strict Gaussian region every target
exponent is admissible, including quasi-Banach ones. At the Gaussian
boundary, the remaining conditions are exactly
\[
q_1\leq q_2 
\qquad \text{and} \qquad
d(p_2,q_2)\leq d(p_1,q_1).
\]
Thus the seven-parameter mapping problem is organized by the sign of
$\Delta(t;\alpha,\beta)$ and, only at exact Gaussian balance, by two
explicit comparisons of mixed-norm integrability.

\smallskip
\noindent
Theorem~\ref{thm:main-compactness} gives the corresponding complete
compactness classification for both $P_t$ and $P_t^+$. Under the
necessary condition $p_1,q_1\geq1$, compactness holds throughout the
strict Gaussian region, while at the Gaussian boundary both mixed-norm
inequalities governing boundedness must become strict:
\[
q_1<q_2
\qquad \text{and} \qquad
d(p_2,q_2)<d(p_1,q_1).
\]
In particular, when specialized to the classical Gaussian
$L^p$-to-$L^q$ scale, the strict defect condition rules out
compactness at the Gaussian boundary. This differs from the boundary
alternative stated in \cite[Theorem~3.4]{HouLiu2023}; see
Remark~\ref{rem:hou-liu-boundary} below for a detailed comparison.

\smallskip
\noindent
The proof also yields quantitative source-tail estimates, with Gaussian
decay in the strict Gaussian region and polynomial decay at the
Gaussian boundary. Finally, a dilation--multiplier reduction transfers
the boundedness and compactness classifications to Gaussian-twisted
Fock operators, Berezin transforms, and more general Fock-kernel
operators.

\subsection{Main difficulties and proof strategy}

The full-range problem involves two difficulties that are absent from
the Banach mixed-norm theory. First, source exponents below one must be
excluded rather than treated by convexity arguments. Second,
quasi-Banach target exponents are genuinely admissible, so the
sufficiency proof must avoid direct use of Minkowski's inequality in
those exponents. At the Gaussian boundary there is an additional
endpoint difficulty: the Gaussian decay is exactly balanced, and the
remaining angular and radial powers become decisive.

\smallskip
\noindent
For necessity, we use four test families adapted to the different
obstructions. Thin radial shells and angular sectors force
$q_1\geq1$ and $p_1\geq1$, respectively. Gaussian-weighted monomials
detect the condition $\Delta(t;\alpha,\beta)\leq0$ and, at the
Gaussian boundary, the ordering $q_1\leq q_2$. Gaussian-weighted
exponential packets yield the remaining defect inequality
\[
d(p_2,q_2)\leq d(p_1,q_1).
\]
After normalization, the monomial and exponential families also form
escaping bounded families. Their images converge locally uniformly to
zero, while the target quasi-norms fail to vanish at the corresponding
endpoint equalities; this gives the strict conditions required for
compactness.

\smallskip
\noindent
For sufficiency, we first perform the angular convolution estimate at
a Banach exponent and then pass to the desired target angular exponent.
Annular discretization reduces the radial estimate to a drifted Gaussian
sequence operator of the form
\[
(\mathcal Tx)_\ell
=
\sum_{k=0}^\infty
\e^{-\gamma\abs{\ell-\tau k}^2}
\e^{-\eta k^2}
(1+k)^\sigma x_k,
\qquad
\tau=\frac{t}{\beta},
\qquad
\eta=-\frac{\Delta(t;\alpha,\beta)}{2\beta},
\]
with $\gamma>0$. A comparison with a diagonal sequence operator gives
the sharp boundedness conditions. When $\Delta(t;\alpha,\beta)<0$, the
residual Gaussian factor $\e^{-\eta k^2}$ absorbs every polynomial
loss; at the Gaussian boundary it disappears, leaving precisely the
two mixed-norm endpoint inequalities. The corresponding tail estimates,
combined with compact source truncations, yield compactness of $P_t^+$,
while a compactness-transfer argument gives the result for $P_t$.

\smallskip
\noindent
Section~\ref{sec:preliminaries} develops the mixed-norm preliminaries,
annular discretization, and the boundedness, compactness, and tail
estimates for drifted Gaussian sequence operators.
Section~\ref{sec:boundedness} combines the four testing arguments with
the unified shell estimate to prove
Theorem~\ref{thm:main-boundedness}. Section~\ref{sec:compactness}
uses normalized escaping families and source truncation to prove
Theorem~\ref{thm:main-compactness}. Section~\ref{sec:gaussian-twisted}
transfers the boundedness and compactness classifications to
Gaussian-twisted Fock operators, Berezin transforms, and Fock-kernel
operators.

\medskip
\noindent
\textbf{Notation and conventions.}
We write
\[
\mathbb N=\{1,2,\ldots\},
\qquad
\mathbb N_0=\{0,1,2,\ldots\},
\]
and use the convention \(\frac{1}{\infty}=0\). For \(1\leq r\leq\infty\),
the conjugate exponent \(r'\) is defined by
\[
\frac1r+\frac1{r'}=1.
\]
The characteristic function of a set \(E\) is denoted by \(\chi_E\).

\smallskip
\noindent
Throughout the paper, \(C>0\) denotes a constant that may change from
one occurrence to the next. Unless otherwise indicated, implicit
constants may depend only on the fixed structural parameters under
consideration, such as the Gaussian parameters and mixed-norm
exponents, but are independent of the functions and free variables
appearing in the estimates. We write \(A\lesssim B\) if \(A\leq CB\),
and \(A\asymp B\) if \(A\lesssim B\) and \(B\lesssim A\).

\smallskip
\noindent
Dependence on an auxiliary parameter is displayed explicitly, for
example by \(C_R\), \(C_N\), or \(\lesssim_R\). In particular,
constants in estimates involving shell or truncation indices are
independent of those indices unless otherwise stated.

%================================================

\section{Preliminaries and auxiliary estimates}
\label{sec:preliminaries}

This section develops the analytic and discrete tools used in the
boundedness and compactness arguments. The main device is an annular
discretization of the radial mixed quasi-norm, which converts estimates
for the Fock projections into weighted sequence estimates over unit
shells. The resulting discrete operators are drifted Gaussian matrices;
their boundedness, compactness, and tail estimates are studied in
Subsection~\ref{subsec:drifted-gaussian-operators}.

%------------------------------------------------

\subsection{Mixed-norm spaces and Fock projection formulas}
\label{subsec:mixed-norm-tools}
%------------------------------------------------
We first collect the basic mixed-norm facts, compactness criteria, and
Fock projection formulas needed later. We then establish an annular
estimate that controls the radial mixed quasi-norm by a weighted
sequence of unit-shell suprema. This estimate provides the passage
from the continuous projection problem to the sequence operators
studied in the next subsection.

\medskip
\noindent
For \(0<p,q\leq\infty\), put
\[
s_{p,q}
:= \min\{1,p,q\}.
\]
The mixed quasi-norms satisfy the \(s_{p,q}\)-triangle inequality
\[
\norm{f+g}_{p,q, \alpha}^{s_{p,q}}
\leq
\norm{f}_{p,q, \alpha}^{s_{p,q}}
+
\norm{g}_{p,q, \alpha}^{s_{p,q}}.
\]
Moreover, \(\Fpqa{\alpha}{p}{q}\) is an
\(s_{p,q}\)-Banach space; see, for instance,
\cite[Lemma~2.1]{FangTien2023}.

\medskip
\noindent
The following elementary embeddings follow from the monotonicity of
the angular \(L^p\)-means and the argument in
\cite[Proposition~3.1]{BlascoGalbis2024}.

\begin{lemma}
\label{lem:mixed-norm-embeddings}
Let \(\alpha>0\).

\begin{enumerate}[label=\textup{(\alph*)}]
\item
If \(0<p_1\leq p_2\leq\infty\) and \(0<q\leq\infty\), then
\[
\Lpqa{\alpha}{p_2}{q}
\hookrightarrow
\Lpqa{\alpha}{p_1}{q}
\qquad \text{and} \qquad
\norm{f}_{p_1, q, \alpha}
\leq
\norm{f}_{p_2, q, \alpha}.
\]

\item
If \(0<p\leq\infty\) and \(0<q_1\leq q_2\leq \infty\), then
\[
\Fpqa{\alpha}{p}{q_1}
\hookrightarrow
\Fpqa{\alpha}{p}{q_2}
\qquad \text{and} \qquad
\norm{f}_{p, q_2, \alpha}
\leq C_{q_1, q_2}
\norm{f}_{p, q_1, \alpha},
\]
where 
\[
C_{q_1,q_2}
:=
\begin{cases}
\displaystyle
\left(\frac{q_2}{q_1}\right)^{\frac{1}{q_2}},
& q_2<\infty,\\[3mm]
1,
& q_2=\infty.
\end{cases}
\]
\end{enumerate}
\end{lemma}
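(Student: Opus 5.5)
Part (a) follows from the monotonicity of normalized angular means. Since \(\frac{\dd\theta}{2\pi}\) is a probability measure on \(\T\), Jensen's (or Hölder's) inequality gives \(M_{p_1}(f,r)\le M_{p_2}(f,r)\) for every \(r>0\) whenever \(p_1\le p_2\), including \(p_2=\infty\). We insert this pointwise-in-\(r\) inequality into the definition of \(\norm{\cdot}_{p,q,\alpha}\). The weight \(q\alpha\,\e^{-\frac{q\alpha}{2}r^2}r\,\dd r\) depends only on \(q\) and \(\alpha\), and the same holds for the \(q=\infty\) formula. The inequality \(\norm{f}_{p_1,q,\alpha}\le\norm{f}_{p_2,q,\alpha}\) is therefore immediate, with constant \(1\). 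No holomorphy is needed here, which is why (a) is stated on \(L\)-spaces.

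For part (b) we use holomorphy, following \cite[Proposition~3.1]{BlascoGalbis2024}. The plan has three steps.

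\emph{Step 1.} For \(f\in H(\C)\), the function \(r\mapsto M_p(f,r)\) is nondecreasing for every \(0<p\le\infty\). This is the Hardy convexity theorem, valid also for \(p<1\) by subharmonicity of \(\abs{f}^p\).

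\emph{Step 2.} We first treat \(q_2=\infty\). Fix \(r>0\) and use monotonicity:
\[
M_p(f,r)^{q_1}\int_r^\infty q_1\alpha\, \e^{-\frac{q_1\alpha}{2}s^2}s\,\dd s
\le \norm{f}_{p,q_1,\alpha}^{q_1}.
\]
The integral equals \(\e^{-\frac{q_1\alpha}{2}r^2}\). Hence \(M_p(f,r)\e^{-\frac{\alpha}{2}r^2}\le\norm{f}_{p,q_1,\alpha}\), which is the case \(C_{q_1,\infty}=1\). This normalization is exactly why the factor \(q\alpha\) appears in the definition.

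\emph{Step 3.} For \(q_1<q_2<\infty\), write \(g(r):=M_p(f,r)\e^{-\frac{\alpha}{2}r^2}\) and split
\[
\norm{f}_{p,q_2,\alpha}^{q_2}= q_2\alpha\int_0^\infty g(r)^{q_1}g(r)^{q_2-q_1}r\,\dd r .
\]
Bound \(g(r)^{q_2-q_1}\) by \(\norm{f}_{p,q_1,\alpha}^{q_2-q_1}\) using Step 2. The remaining integral \(q_2\alpha\int g^{q_1}r\,\dd r\) equals \(\frac{q_2}{q_1}\norm{f}_{p,q_1,\alpha}^{q_1}\), since the weight in \(\norm{\cdot}_{p,q_1,\alpha}\) is \(q_1\alpha\,g(r)^{q_1}r\,\dd r\). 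This yields \(\norm{f}_{p,q_2,\alpha}^{q_2}\le\frac{q_2}{q_1}\norm{f}_{p,q_1,\alpha}^{q_2}\), and hence the stated constant \(C_{q_1,q_2}=(q_2/q_1)^{1/q_2}\). The case \(q_1=q_2\) is trivial.

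The only real input is Step 1, monotonicity of integral means for \(0<p<1\) and for \(p=\infty\). I would cite it as classical (subharmonicity of \(\abs{f}^p\) plus the maximum principle on the disc applied via Hardy's theorem) rather than reprove it. After that, everything is the elementary computation above, which I expect to pose no obstacle.
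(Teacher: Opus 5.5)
Your proof is correct and follows the route the paper indicates. For (a) you use monotonicity of the normalized angular means in \(p\). For (b) you use monotonicity of \(r\mapsto M_p(f,r)\) for entire \(f\) together with the tail-integral bound \(M_p(f,r)\e^{-\frac{\alpha}{2}r^2}\le\norm{f}_{p,q_1,\alpha}\) (the Blasco--Galbis argument the paper cites), which gives exactly the constant \(C_{q_1,q_2}\); the paper only cites these ingredients, and your computation supplies the details.
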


The following standard local estimate will be used repeatedly.

\begin{lemma}
\label{lem:point-evaluation}
Let \(\alpha>0\) and \(0<p,q\leq\infty\). For every \(R>0\), there
exists a constant
\(
C=C(\alpha,p,q,R)>0
\)
such that
\[
\sup_{\abs{z}\leq R}\abs{f(z)}
\leq
C\norm{f}_{p,q,\alpha}
\]
for every \(f\in\Fpqa{\alpha}{p}{q}\). Consequently,
convergence in \(\Fpqa{\alpha}{p}{q}\) implies locally uniform
convergence on \(\C\).
\end{lemma}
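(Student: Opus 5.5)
The plan is to combine subharmonicity of \(\abs{f}^s\) on a small disc with a two-step averaging: first over angles, then over radii. Fix \(R>0\) and \(z\) with \(\abs{z}\le R\). Choose \(s\) with \(0<s<\min\{p,q\}\) when both are finite; when \(p=\infty\) or \(q=\infty\), choose \(0<s<\min\{p,q\}\) or \(0<s\le\) the finite one, which is still admissible. Since \(f\) is entire, \(\abs{f}^s\) is subharmonic. Applying the sub-mean value property on the disc \(D(z,1)\), and enlarging to the annulus-type region \(\{w: \abs{w}\le R+1\}\), gives
\[
\abs{f(z)}^s\le \frac{1}{\pi}\int_{D(z,1)}\abs{f(w)}^s\,dA(w)\le \frac{1}{\pi}\int_0^{R+1}\int_0^{2\pi}\abs{f(re^{i\theta})}^s\,d\theta\, r\,dr .
\]

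Next pass to integral means. The right-hand side equals \(2\int_0^{R+1}M_s(f,r)^s\,r\,dr\). Monotonicity of angular means (Hölder on \(\T\) with normalized measure) gives \(M_s(f,r)\le M_p(f,r)\). Hence
\[
\abs{f(z)}^s\le 2\int_0^{R+1}M_p(f,r)^s\,r\,dr .
\]

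Now insert the Gaussian weight, which is bounded above and below on \([0,R+1]\): \(1\le \e^{\frac{s\alpha}{2}r^2}\le \e^{\frac{s\alpha}{2}(R+1)^2}\). Apply Hölder's inequality in \(r\) on the finite measure \(r\,dr\) on \([0,R+1]\) with exponent \(q/s\ge1\) (if \(q=\infty\), bound directly by the ess sup). This yields
\[
\int_0^{R+1}M_p(f,r)^s r\,dr\le C\Big(\int_0^{R+1}M_p(f,r)^q\e^{-\frac{q\alpha}{2}r^2}r\,dr\Big)^{s/q}\le C\norm{f}_{p,q,\alpha}^s,
\]
with \(C\) depending only on \(\alpha,p,q,R\). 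The case \(q=\infty\) gives \(M_p(f,r)\le \e^{\frac\alpha2 (R+1)^2}\norm{f}_{p,\infty,\alpha}\) for a.e. \(r\le R+1\), which suffices. Taking \(s\)-th roots gives the estimate.

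The consequence then follows by applying the estimate to \(f_n-f\): convergence \(\norm{f_n-f}_{p,q,\alpha}\to0\) forces uniform convergence on each disc \(\abs{z}\le R\).

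The only delicate point is the endpoint \(p=\infty\) or \(q=\infty\), where ess sups replace integrals. This is handled because \(M_s(f,r)\le M_\infty(f,r)\) still holds, and ess-sup bounds integrate over the finite measure without loss.
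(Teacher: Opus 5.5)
Your proof is correct, but it takes a different route from the paper's.

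The paper works on a single circle. Subharmonicity of $\abs{f}^p$ (or the maximum principle when $p=\infty$) gives $\sup_{\abs{z}\le R}\abs{f(z)}\lesssim M_p(f,R+1)$. Monotonicity of $r\mapsto M_p(f,r)$ then bounds $M_p(f,R+1)$ by the \emph{outer} part of the radial integral. Since $q\alpha\int_{R+1}^\infty \e^{-\frac{q\alpha}{2}r^2}r\,dr=\e^{-\frac{q\alpha}{2}(R+1)^2}$, this gives $M_p(f,R+1)\le \e^{\frac{\alpha}{2}(R+1)^2}\norm{f}_{p,q,\alpha}$.

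You instead apply the area sub-mean value property of $\abs{f}^s$, with $s\le\min\{p,q\}$, on unit discs. You then use H\"older's inequality in the angular and radial variables, so only the \emph{inner} part $[0,R+1]$ of the radial integral enters.

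Your version does not need monotonicity of the integral means, which for $p<1$ is itself a subharmonicity fact (Hardy's convexity theorem). It also avoids a Poisson-kernel bound on the circle, and the small exponent $s$ handles all quasi-Banach exponents in the same way. It even gives the local estimate $\sup_{\abs{z}\le R}\abs{f(z)}\lesssim\norm{f}_{p,q,\alpha;[0,R+1]}$. The paper's version is a two-line argument with an explicit Gaussian constant $\e^{\frac{\alpha}{2}(R+1)^2}$ and no auxiliary exponent.

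Two minor wording points: the region $\{\abs{w}\le R+1\}$ is a disc, not an annulus. Your choice of $s$ can be stated simply as any finite $s$ with $0<s\le\min\{p,q\}$.
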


\begin{proof}
By subharmonicity of \(\abs{f}^p\) when \(p<\infty\), and by the
maximum principle when \(p=\infty\),
\[
\sup_{\abs{z}\leq R}\abs{f(z)}
\lesssim
M_p(f,R+1)
\leq
\e^{\frac{\alpha}{2}(R+1)^2}
\norm{f}_{p,q,\alpha}.
\]
Here the second inequality follows from the monotonicity of
\(M_p(f,\cdot)\) and the definition of the mixed quasi-norm, with the
usual supremum interpretation when \(q=\infty\).
\end{proof}

\medskip
\noindent
For \(f\in L_\alpha^{p,q}\) and \(R>0\), define the local and tail quasi-norms, respectively, by
\[
\norm{f}_{p,q,\alpha;[0, R]}
:=
\begin{cases}
\displaystyle
\left(
q\alpha
\int_0^R
M_p(f,r)^q
\e^{-\frac{q\alpha}{2}r^2}
r\,\dd r
\right)^{\frac{1}{q}},
& 0<q<\infty,\\[4mm]
\displaystyle
\esssup_{0< r\leq R}
M_p(f,r)\e^{-\frac{\alpha}{2}r^2},
& q=\infty,
\end{cases}
\]
and
\[
\norm{f}_{p,q,\alpha;[R, \infty)}
:=
\begin{cases}
\displaystyle
\left(
q\alpha
\int_R^\infty
M_p(f,r)^q
\e^{-\frac{q\alpha}{2}r^2}
r\,\dd r
\right)^{\frac{1}{q}},
& 0<q<\infty,\\[4mm]
\displaystyle
\esssup_{r\geq R}
M_p(f,r)\e^{-\frac{\alpha}{2}r^2},
& q=\infty.
\end{cases}
\]

\begin{proposition}
\label{prop:ambient-compactness}
Let $\alpha>0$, $0<p,q\le\infty$, and let
\(\mathcal A\subset C(\mathbb C)\cap L^{p,q}_\alpha\).
Suppose that $\mathcal A$ is relatively compact in $C(\mathbb C)$
with respect to locally uniform convergence and satisfies
\[
\lim_{R\to\infty}
\sup_{f\in\mathcal A}
\|f\|_{p,q,\alpha;[R,\infty)}=0.
\]
Then $\mathcal A$ is relatively compact in $L^{p,q}_\alpha$.
Moreover, when $q<\infty$, the uniform tail condition is necessary for
relative compactness.

\smallskip
\noindent
In particular, if $\mathcal A$ is a bounded subset of $\mathcal F^{p,q}_\alpha$,
then the uniform
tail condition implies that $\mathcal A$ is relatively compact in
$\mathcal F^{p,q}_\alpha$, and the converse holds when $q<\infty$.
\end{proposition}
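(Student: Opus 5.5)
The plan is the usual $\varepsilon$-net (total boundedness) argument in the quasi-metric space $L^{p,q}_\alpha$, which is complete. Put $s=s_{p,q}$ and use the metric $\rho(f,g)=\|f-g\|_{p,q,\alpha}^{s}$ supplied by the $s$-triangle inequality. Given $\varepsilon>0$, the tail hypothesis provides $R$ with $\sup_{f\in\mathcal A}\|f\|_{p,q,\alpha;[R,\infty)}<\varepsilon$. For $f,g\in\mathcal A$ split
\[
f-g=(f-g)\chi_{\{|z|\le R\}}+f\chi_{\{|z|>R\}}-g\chi_{\{|z|>R\}}.
\]
This gives
\[
\|f-g\|^s_{p,q,\alpha}\le \|(f-g)\chi_{\{|z|\le R\}}\|^s_{p,q,\alpha}+2\varepsilon^s .
\]
For the local piece, the $M_p$-means are controlled by the supremum norm, so
\[
\|(f-g)\chi_{\{|z|\le R\}}\|_{p,q,\alpha}\le C_{R}\sup_{|z|\le R}|f-g|.
\]
Here $C_R^q=q\alpha\int_0^R e^{-q\alpha r^2/2}r\,dr\le 2$ when $q<\infty$, and $C_R=1$ when $q=\infty$. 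Relative compactness of $\mathcal A$ in $C(\mathbb C)$ under locally uniform convergence makes the restrictions to $\{|z|\le R\}$ totally bounded in sup norm. Choosing a finite $\delta$-net $f_1,\dots,f_N\in\mathcal A$ for these restrictions with $C_R\delta<\varepsilon$ then gives a finite $3^{1/s}\varepsilon$-net of $\mathcal A$ in $L^{p,q}_\alpha$. Total boundedness plus completeness of $L^{p,q}_\alpha$ (a standard mixed Lebesgue space fact, mirroring the cited completeness of $\mathcal F^{p,q}_\alpha$) yields relative compactness.

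For necessity when $q<\infty$, suppose $\mathcal A$ is relatively compact in $L^{p,q}_\alpha$. Given $\varepsilon$, take a finite $\varepsilon$-net $g_1,\dots,g_N\in L^{p,q}_\alpha$. Each fixed $g_j$ has vanishing tail, since by dominated convergence $\int_R^\infty M_p(g_j,r)^q e^{-q\alpha r^2/2}r\,dr\to0$ as $R\to\infty$; this is exactly where $q<\infty$ is needed. The tail quasi-norm also satisfies the $s$-triangle inequality, so
\[
\|f\|^s_{[R,\infty)}\le \|f-g_j\|^s+\|g_j\|^s_{[R,\infty)}\le 2\varepsilon^s
\]
for all $f\in\mathcal A$ once $R$ is large enough for all $j$.

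For the Fock statement, let $\mathcal A\subset\mathcal F^{p,q}_\alpha$ be bounded. By Lemma~\ref{lem:point-evaluation} it is locally uniformly bounded, and Montel's theorem makes it relatively compact in $C(\mathbb C)$. The first part then gives relative compactness in $L^{p,q}_\alpha$. Limits stay in $\mathcal F^{p,q}_\alpha$ because this space is closed in $L^{p,q}_\alpha$. To see closedness, note that a sequence converging in $L^{p,q}_\alpha$ is Cauchy in $\mathcal F^{p,q}_\alpha$, which is complete ($s$-Banach). Its $\mathcal F$-limit is entire, and it agrees a.e. with the $L$-limit because both norms coincide. The converse is the necessity part above.

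The main obstacle is bookkeeping rather than substance. One must handle the quasi-norm through the $s$-power metric, treat the $q=\infty$ case of the local estimate (where it is just $\sup_{|z|\le R}|f-g|$), and confirm completeness of the ambient quasi-Banach space $L^{p,q}_\alpha$.
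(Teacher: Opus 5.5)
Your argument is correct, but it takes a different route from the paper. The paper argues sequentially. Given $(f_n)\subset\mathcal A$, it extracts a subsequence converging locally uniformly to some $f\in C(\mathbb C)$. It then applies Fatou's lemma on $[R,\infty)$ to get $f\in L^{p,q}_\alpha$ with $\|f\|_{p,q,\alpha;[R,\infty)}\le\varepsilon$. Finally, it combines local uniform convergence on $[0,R]$ with the $s$-triangle inequality on the tail to conclude $f_n\to f$ in $L^{p,q}_\alpha$.

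Because that proof constructs the limit explicitly, it never needs completeness of $L^{p,q}_\alpha$; the paper neither states nor cites this, citing only completeness of $\mathcal F^{p,q}_\alpha$. In the analytic case the limit is automatically entire, being a locally uniform limit of entire functions. So no closedness of $\mathcal F^{p,q}_\alpha$ in $L^{p,q}_\alpha$ is needed, and the $L^{p,q}_\alpha$-limit is identified with the locally uniform limit.

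Your total-boundedness argument avoids Fatou and the explicit limit, and its estimates are all fine (your local constant actually satisfies $C_R\le 1$). The price is two extra inputs. First, you need completeness of the ambient space $L^{p,q}_\alpha$. This is true by the usual Riesz--Fischer argument using the $s$-triangle inequality and monotone convergence, but the paper does not supply it. Second, you need the closedness step for $\mathcal F^{p,q}_\alpha$, which you derive correctly from the cited completeness of $\mathcal F^{p,q}_\alpha$. Your necessity argument for $q<\infty$ coincides with the paper's.
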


\begin{proof}
Let $(f_n)_n$ be a sequence in $\mathcal A$. By relative
compactness with respect to locally uniform convergence, after passing to a subsequence there
exists $f\in C(\mathbb C)$ such that
\[
f_n\longrightarrow f
\qquad\text{locally uniformly on }\mathbb C.
\]
Hence, for every $R>0$,
\[
\|f_n-f\|_{p,q,\alpha;[0,R]}\longrightarrow0.
\]
Put $s:=s_{p,q}=\min\{1,p,q\}$. Given $\varepsilon>0$, choose $R>0$
such that
\[
\sup_{g\in\mathcal A}
\|g\|_{p,q,\alpha;[R,\infty)}<\varepsilon.
\]
Fatou's lemma when \(q<\infty\), and the
definition when \(q=\infty\), show that
\[
\|f\|_{p,q,\alpha;[R,\infty)}
\le
\liminf_{n\to\infty}
\|f_n\|_{p,q,\alpha;[R,\infty)}
\le\varepsilon.
\]
Thus $f\in L^{p,q}_\alpha$, and
\[
\|f_n-f\|_{p,q,\alpha;[R,\infty)}^s
\le
\|f_n\|_{p,q,\alpha;[R,\infty)}^s
+
\|f\|_{p,q,\alpha;[R,\infty)}^s
\le 2\varepsilon^s.
\]
Combining the local and tail estimates yields
\[
\limsup_{n\to\infty}
\|f_n-f\|_{p,q,\alpha}^s
\le 2\varepsilon^s.
\]
Since $\varepsilon>0$ is arbitrary,
\[
\|f_n-f\|_{p,q,\alpha}\longrightarrow0.
\]
Hence $\mathcal A$ is relatively compact in $L^{p,q}_\alpha$.

\medskip
\noindent
Suppose now that $q<\infty$ and that $\mathcal A$ is relatively
compact. Given $\varepsilon>0$, choose $f_1,\ldots,f_N\in L^{p,q}_\alpha$
such that every $f\in\mathcal A$ satisfies
\[
\|f-f_j\|_{p,q,\alpha}<\varepsilon
\]
for some $j$. Since
\(f_j \in L^{p,q}_{\alpha} \) for \(1 \leq j \leq N\),
we may choose $R>0$ such that
\[
\max_{1\le j\le N}
\|f_j\|_{p,q,\alpha;[R,\infty)}<\varepsilon.
\]
It follows that
\[
\|f\|_{p,q,\alpha;[R,\infty)}^s
\le
\|f-f_j\|_{p,q,\alpha}^s
+
\|f_j\|_{p,q,\alpha;[R,\infty)}^s
<2\varepsilon^s.
\]
Taking the supremum over $f\in\mathcal A$ proves the necessity.

\medskip
\noindent
Finally, if $\mathcal A$ is bounded in $\mathcal F^{p,q}_\alpha$, then
Lemma~\ref{lem:point-evaluation} gives local uniform boundedness.
Montel's theorem therefore shows that $\mathcal A$ is relatively
compact  with
respect to locally uniform convergence. The analytic assertion now
follows from the ambient one.
\end{proof}

\medskip
\noindent
The preceding proposition yields the following operator-theoretic
consequence, which will be used in the compactness argument for the
Fock projections.

\begin{corollary}
\label{cor:ambient-operator-compactness}
Let $X$ be a quasi-normed linear space and let
\(
T:X\longrightarrow C(\mathbb C)\cap L^{p,q}_\alpha
\)
be linear and bounded with respect to the $L^{p,q}_\alpha$ quasi-norm.
\begin{enumerate}[label=\textup{(\alph*)}]
\item Suppose that the image under $T$ of the unit ball of $X$ is
relatively compact with respect to locally uniform convergence. If
\[
\lim_{R\to\infty}
\sup_{\norm{f}_X\leq1}
\norm{Tf}_{p,q,\alpha;[R,\infty)}=0,
\]
then $T$ is compact. When $q<\infty$, the converse also holds.

\item If $T$ is compact, $(f_n)_n$ is bounded in $X$, and
\(
Tf_n\longrightarrow0\) locally uniformly on \(\mathbb C\),
then
\[
\norm{Tf_n}_{p,q,\alpha}\longrightarrow0.
\]
\end{enumerate}

\smallskip
\noindent
In particular, if
\(
T:X\longrightarrow\mathcal F^{p,q}_\alpha
\)
is bounded, then the local relative-compactness hypothesis in
\textup{(a)} is automatic.
\end{corollary}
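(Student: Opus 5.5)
Part (a) reduces to Proposition~\ref{prop:ambient-compactness}, applied to \(\mathcal A:=\{Tf:\norm{f}_X\le1\}\subset C(\C)\cap L^{p,q}_\alpha\). The hypotheses of (a) give relative compactness of \(\mathcal A\) for locally uniform convergence and the uniform tail condition. Hence \(\mathcal A\) is relatively compact in \(L^{p,q}_\alpha\), which is the definition of compactness of \(T\).

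For the converse in (a), assume \(q<\infty\) and that \(T\) is compact. Then \(\mathcal A\) is relatively compact in \(L^{p,q}_\alpha\), and the necessity part of Proposition~\ref{prop:ambient-compactness} gives
\[
\lim_{R\to\infty}\sup_{f\in\mathcal A}\norm{f}_{p,q,\alpha;[R,\infty)}=0,
\]
which is the displayed tail condition.

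For (b), I would use a subsequence argument. Suppose \(\norm{Tf_n}_{p,q,\alpha}\not\to0\). Then some subsequence satisfies \(\norm{Tf_{n_k}}_{p,q,\alpha}\ge\varepsilon>0\). Since \((f_n)\) is bounded and \(T\) is compact, a further subsequence has \(Tf_{n_k}\to g\) in \(L^{p,q}_\alpha\).

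The main point is to identify \(g=0\). Fix \(R>0\). By the \(s\)-triangle inequality on \([0,R]\), the local quasi-norms \(\norm{Tf_{n_k}}_{p,q,\alpha;[0,R]}\) tend to \(\norm{g}_{p,q,\alpha;[0,R]}\) up to \(s\)-power comparison. Locally uniform convergence \(Tf_{n_k}\to0\) forces these local quasi-norms to tend to \(0\), because the weight is finite on \([0,R]\). Hence \(\norm{g}_{p,q,\alpha;[0,R]}=0\) for every \(R\), so \(g=0\) a.e. and \(\norm{g}_{p,q,\alpha}=0\). This contradicts \(\norm{Tf_{n_k}}_{p,q,\alpha}\ge\varepsilon\).

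The final assertion follows from Lemma~\ref{lem:point-evaluation}. It shows that the image of the unit ball, being bounded in \(\mathcal F^{p,q}_\alpha\), is locally uniformly bounded, and Montel's theorem then gives relative compactness for locally uniform convergence. No step here seems hard. The only care needed is in (b), where one argues along subsequences and compares limits in the quasi-norm with locally uniform limits.
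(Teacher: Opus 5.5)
Your proposal is correct and follows essentially the same route as the paper. Part (a) and its converse come from applying Proposition~\ref{prop:ambient-compactness} to the image of the unit ball, part (b) uses a subsequence argument with the $s$-triangle inequality on $[0,R]$ to identify the limit as $0$, and the final assertion follows from Lemma~\ref{lem:point-evaluation} together with Montel's theorem.
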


\begin{proof}
Part~\textup{(a)} follows by applying
Proposition~\ref{prop:ambient-compactness} to the image under $T$ of
the unit ball of $X$. When $q<\infty$, the converse follows from the
necessity part of that proposition.

\medskip
\noindent
For part~\textup{(b)}, suppose otherwise. After passing to a
subsequence, we may assume that
\[
\norm{Tf_n}_{p,q,\alpha}\geq\varepsilon
\]
for some $\varepsilon>0$ and every $n$. By compactness, after passing
to a further subsequence, we may assume that
\[
Tf_n\longrightarrow g
\qquad\text{in }L^{p,q}_\alpha
\]
for some $g\in L^{p,q}_\alpha$.
Put $s:=s_{p,q}=\min\{1,p,q\}$. For every $R>0$, the $s$-triangle
inequality gives
\[
\norm{g}_{p,q,\alpha;[0,R]}^s
\leq
\norm{g-Tf_n}_{p,q,\alpha}^s
+
\norm{Tf_n}_{p,q,\alpha;[0,R]}^s.
\]
The first term tends to zero by convergence in $L^{p,q}_\alpha$, while
the second tends to zero by locally uniform convergence. Hence
\[
\norm{g}_{p,q,\alpha;[0,R]}=0
\quad \text{for every} \quad R>0,
\]
so $g=0$ in $L^{p,q}_\alpha$. Consequently,
\[
\norm{Tf_n}_{p,q,\alpha}\longrightarrow0
\]
along the chosen subsequence, contradicting the lower bound.

\medskip
\noindent
Finally, if $T$ takes values in $\mathcal F^{p,q}_\alpha$, boundedness
of $T$ and Lemma~\ref{lem:point-evaluation} give local uniform
boundedness of the image of the unit ball. Montel's theorem then shows
that this image is relatively compact with respect to locally uniform
convergence.
\end{proof}

\medskip
\noindent
We shall also use the following standard estimates. Since
\(
M_p(z^n,r)=r^n,
\)
we have
\[
\norm{z^n}_{p,q,\alpha}
=
\norm{z^n}_{q,\alpha}
.
\]
Thus, by \cite[p.~40]{Zhu2012} and Stirling's formula,
\begin{equation}
\label{eq:monomial-norm}
\norm{z^n}_{p,q,\alpha}
\asymp
\left(\frac{n!}{\alpha^n}\right)^{\frac12}
(1+n)^{\frac{1}{2q}-\frac14},
\qquad n\in\mathbb N_0,
\end{equation}
with the convention \(\frac{1}{\infty}=0\). Moreover, by
\cite[Proposition~2.3]{LiuShiWei2027}, for \(t>0\) and \(w\in\C\),
\begin{equation}
\label{eq:exponential-kernel-norm}
\norm{\e^{t z\overline w}}_{p,q,\alpha}
\asymp
\exp\left(
\frac{t^2}{2\alpha}\abs{w}^2
\right)
(1+\abs{w})^{d(p,q)}.
\end{equation}
The implicit constants in \eqref{eq:monomial-norm} and
\eqref{eq:exponential-kernel-norm} depend only on
\(\alpha, q\) and on \(\alpha,t, p,q\), respectively.

\medskip
\noindent
We next record a few elementary formulas for the Fock projections.
Whenever the defining integral converges absolutely,
\begin{equation}
\label{eq:pointwise-domination}
\abs{P_tf(z)}
\leq
P_t^+\abs{f}(z).
\end{equation}

\smallskip
\noindent
The following identities describe the action of \(P_t\) on the test
functions used in the necessity arguments.

\begin{lemma}
\label{lem:fock-test-formulas}
Let \(t>0\) and \(\gamma<t\).

\begin{enumerate}[label=\textup{(\alph*)}]
\item
For every \(n\in\mathbb N_0\),
\[
P_t\left(
w^n\e^{\gamma\abs{w}^2}
\right)(z)
=
\left(
\frac{t}{t-\gamma}
\right)^{n+1}
z^n.
\]

\item
For every \(\lambda>0\) and \(\zeta\in\C\),
\[
P_t\left(
\e^{\lambda w\overline \zeta}
\e^{\gamma\abs{w}^2}
\right)(z)
=
\frac{t}{t-\gamma}
\exp\left(
\frac{t\lambda}{t-\gamma}z\overline \zeta
\right).
\]
\end{enumerate}
\end{lemma}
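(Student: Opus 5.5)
Both identities follow from a direct computation in polar coordinates, using the orthogonality of the monomials on circles. Since \(\gamma<t\), the integrand \(K_t(z,w)\,w^n\e^{\gamma\abs{w}^2}\e^{-t\abs{w}^2}\) is dominated by \(\e^{t\abs{z}\abs{w}}\abs{w}^n\e^{-(t-\gamma)\abs{w}^2}\), which is integrable. Both defining integrals therefore converge absolutely, and we may expand the kernel and interchange sum and integral.

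\smallskip
\noindent
For (a), we write
\[
K_t(z,w)=\sum_{k\ge0}\frac{t^k z^k\overline w^k}{k!},
\]
and the series converges absolutely and uniformly on compacta. After dominated convergence (the bound \(\sum_k t^k\abs{z}^k\abs{w}^k/k!=\e^{t\abs{z}\abs{w}}\) is integrable against the weight above), we integrate term by term. Angular integration kills every term except \(k=n\), since \(\int_0^{2\pi}\e^{i(n-k)\theta}\dd\theta=2\pi\delta_{nk}\). The surviving term is
\[
\frac{t^{n+1}z^n}{\pi\, n!}\int_{\C}\abs{w}^{2n}\e^{-(t-\gamma)\abs{w}^2}\dd A(w)
=\frac{t^{n+1}z^n}{n!}\int_0^\infty s^{n}\e^{-(t-\gamma)s}\dd s
=\frac{t^{n+1}}{(t-\gamma)^{n+1}}z^n .
\]
Here we used \(\dd A=r\,\dd r\,\dd\theta\) and the substitution \(s=r^2\). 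This gives (a).

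\smallskip
\noindent
For (b), we expand
\[
\e^{\lambda w\overline\zeta}=\sum_n\frac{\lambda^n\overline\zeta^n w^n}{n!}.
\]
This is again justified by domination: the total integrand is bounded by \(\e^{(\lambda\abs{\zeta}+t\abs{z})\abs{w}}\e^{-(t-\gamma)\abs{w}^2}\), which is integrable. Linearity of the absolutely convergent integral together with (a) then gives
\[
P_t\bigl(\e^{\lambda w\overline\zeta}\e^{\gamma\abs{w}^2}\bigr)(z)
=\sum_n\frac{\lambda^n\overline\zeta^n}{n!}\Bigl(\frac{t}{t-\gamma}\Bigr)^{n+1}z^n
=\frac{t}{t-\gamma}\exp\Bigl(\frac{t\lambda}{t-\gamma}z\overline\zeta\Bigr).
\]

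\smallskip
\noindent
The only point requiring care is justifying the interchange of the sum and the integral, which the integrable majorants above handle. The hypothesis \(\gamma<t\) is exactly what makes these majorants integrable; otherwise the argument is a routine Gaussian moment computation.
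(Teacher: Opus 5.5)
Your proof is correct, but it takes a different route from the paper.

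The paper expands nothing. For an arbitrary \(h\in\mathcal F^2_{t-\gamma}\), it writes \(tz\overline{w}=(t-\gamma)u\overline{w}\) with \(u=\frac{t}{t-\gamma}z\). Then \(\frac{t}{\pi}\int_{\C}h(w)\e^{tz\overline{w}}\e^{-(t-\gamma)\abs{w}^2}\,dA(w)\) is \(\frac{t}{t-\gamma}\) times the reproducing formula of \(\mathcal F^2_{t-\gamma}\) evaluated at \(u\). This gives the single identity \(P_t\bigl(h(w)\e^{\gamma\abs{w}^2}\bigr)(z)=\frac{t}{t-\gamma}\,h\bigl(\frac{t}{t-\gamma}z\bigr)\), and (a) and (b) are the special choices \(h(w)=w^n\) and \(h(w)=\e^{\lambda w\overline{\zeta}}\).

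You instead expand the kernel, use orthogonality of monomials on circles, and evaluate the Gamma integral. In effect this reproves the reproducing property of \(\mathcal F^2_{t-\gamma}\) on monomials. You then get (b) by summing the power series of \(\e^{\lambda w\overline{\zeta}}\) against (a).

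What each buys:
\begin{enumerate}
\item The paper's argument is shorter and yields the dilation formula for every \(h\in\mathcal F^2_{t-\gamma}\) at once. It relies on a standard fact and leaves absolute convergence implicit; Cauchy--Schwarz in \(L^2(d\lambda_{t-\gamma})\) supplies it.
\item Your argument is fully self-contained and makes the interchange of sum and integral explicit through the majorants \(\e^{(t\abs{z}+\lambda\abs{\zeta})\abs{w}}\e^{-(t-\gamma)\abs{w}^2}\). The cost is a separate summation step for (b).
\end{enumerate}
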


\begin{proof}
For \(h\in\mathcal F_{t-\gamma}^2\), the reproducing property gives
\[
P_t\left(
h(w)\e^{\gamma\abs{w}^2}
\right)(z)
=
\frac{t}{\pi}
\int_{\C}
h(w)\e^{tz\overline w}
\e^{- (t - \gamma)\abs{w}^2}\,\dd A(w)\\
=
\frac{t}{t - \gamma}
h\left(\frac{t}{t - \gamma}z\right).
\]
Taking \(h(w)=w^n\) and
\(h(w)=\e^{\lambda w\overline \zeta}\), respectively,  proves
\textup{(a)} and \textup{(b)}.
\end{proof}

\medskip
\noindent
We finish this subsection with an annular estimate used in the shell
arguments for \(P_t\) and \(P_t^+\). For a measurable
function \(f\) on \(\C\), set
\[
B_{p,\alpha}(f;k)
:=
\operatorname*{ess\,sup}_{k\leq r<k+1}
M_p(f,r)\e^{-\frac{\alpha}{2}r^2},
\qquad k\in\mathbb N_0.
\]
Then
\[
\norm{f}_{p,\infty,\alpha}
=
\sup_{k\geq0}B_{p,\alpha}(f;k).
\]

\begin{proposition}[Annular discretization]
\label{prop:annular-discretization}
Let \(\alpha>0\), \(0<p\leq\infty\), and \(0<q<\infty\).
Then every measurable function \(f\) on \(\C\) satisfies
\[
\norm{f}_{p,q,\alpha}^q
\lesssim
\sum_{k=0}^\infty
(1+k)B_{p,\alpha}(f;k)^q.
\]
The implicit constant depends only on \(\alpha\) and \(q\).
\end{proposition}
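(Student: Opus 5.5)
The plan is to split the radial integral defining \(\norm{f}_{p,q,\alpha}^q\) over the unit shells \([k,k+1)\), \(k\in\mathbb N_0\), and bound each shell separately by the corresponding unit-shell supremum. Since \(q<\infty\), monotone convergence gives
\[
\norm{f}_{p,q,\alpha}^q
=
q\alpha\sum_{k=0}^\infty
\int_k^{k+1}
M_p(f,r)^q\e^{-\frac{q\alpha}{2}r^2}\,r\,\dd r .
\]
Here we use the convention that if some \(B_{p,\alpha}(f;k)=\infty\), the right-hand side of the claimed estimate is infinite and there is nothing to prove. The key observation is that the product \(M_p(f,r)^q\e^{-\frac{q\alpha}{2}r^2}\) is precisely \(\bigl(M_p(f,r)\e^{-\frac{\alpha}{2}r^2}\bigr)^q\). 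No Gaussian weight therefore needs to be moved outside the integral, and the weight is absorbed exactly into the definition of \(B_{p,\alpha}(f;k)\).

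Next I bound each shell integral. For almost every \(r\in[k,k+1)\) we have \(M_p(f,r)\e^{-\frac{\alpha}{2}r^2}\le B_{p,\alpha}(f;k)\), directly from the definition of the essential supremum. Hence
\[
q\alpha\int_k^{k+1}
\bigl(M_p(f,r)\e^{-\frac{\alpha}{2}r^2}\bigr)^q\,r\,\dd r
\le
q\alpha\,B_{p,\alpha}(f;k)^q\int_k^{k+1}r\,\dd r
=
q\alpha\,\frac{2k+1}{2}\,B_{p,\alpha}(f;k)^q .
\]
Since \(\frac{2k+1}{2}\le 1+k\), this is at most \(q\alpha(1+k)B_{p,\alpha}(f;k)^q\). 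Summing over \(k\) yields the claim with constant \(q\alpha\), which depends only on \(\alpha\) and \(q\), as asserted.

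The only point needing care is measurability of \(r\mapsto M_p(f,r)\) and the use of an essential rather than a pointwise supremum. I would handle the first by Tonelli's theorem in polar coordinates when \(p<\infty\). For \(p=\infty\), the essential supremum over \(\theta\) is measurable in \(r\) up to null sets, which is implicitly assumed already in the definition of the norm. Null sets in \(r\) do not affect the integrals, so the essential supremum bound suffices. I therefore expect no real obstacle: the estimate is a direct shell-by-shell comparison.
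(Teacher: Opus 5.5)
Your proof is correct and follows the paper's argument: you split the radial integral over the shells $[k,k+1)$, bound the weighted mean almost everywhere on each shell by $B_{p,\alpha}(f;k)$, and use $\int_k^{k+1} r\,\dd r=\frac{2k+1}{2}\le 1+k$, which gives the explicit constant $q\alpha$. The paper states this only in one line, so your write-up simply supplies the omitted details.
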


\begin{proof}
This follows by decomposing the radial integral over the intervals
\([k,k+1)\), \(k\in\mathbb N_0\).
\end{proof}

%------------------------------------------------
\subsection{Drifted Gaussian sequence operators}
\label{subsec:drifted-gaussian-operators}

The annular estimates for the Fock projections lead to discrete
operators whose kernels are localized near a drifted diagonal
\(\ell\approx\tau k\). We study these operators independently. The
main point is that, after transferring the target shell weight through
the Gaussian kernel, their operator quasi-norms are equivalent to those of
suitable diagonal sequence operators. The corresponding comparison
for the tail operators yields both compactness and quantitative tail
estimates.

\medskip
\noindent
For \(0<q\leq\infty\), let \(\ell^q\) denote the usual sequence
space on \(\mathbb N_0\). For a sequence
\(y=(y_\ell)_{\ell\geq0}\), define
\[
\norm{y}_{\ell_{\mathrm{sh}}^q}
:=
\begin{cases}
\displaystyle
\left(
\sum_{\ell=0}^\infty
(1+\ell)\abs{y_\ell}^q
\right)^{\frac1q},
&0<q<\infty,\\[4mm]
\displaystyle
\sup_{\ell\geq0}\abs{y_\ell},
&q=\infty.
\end{cases}
\]
For \(0<q_2<q_1\leq\infty\), define
\[
q_*
:=
\frac{1}{d(q_1,q_2)}
=
\left(
\frac1{q_2}-\frac1{q_1}
\right)^{-1}.
\]

\medskip
\noindent
We first record two elementary facts.

\begin{lemma}
\label{lem:drifted-gaussian-matrix}
Let \(\gamma,\tau>0\). The operator
\[
(Gx)_\ell
:=
\sum_{k=0}^\infty
\e^{-\gamma\abs{\ell-\tau k}^2}x_k,
\qquad \ell\in\mathbb N_0,
\]
is bounded on \(\ell^q\) for every \(0<q\leq\infty\).
\end{lemma}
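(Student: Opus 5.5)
We prove boundedness of \(G\) on \(\ell^q\) by treating the two endpoint cases \(q=\infty\) and \(q\leq 1\) directly, and then obtaining \(1<q<\infty\) by interpolation (or, equivalently, by a Schur-type estimate). In every case the key input is uniform control of the row sums and column sums of the kernel \(\e^{-\gamma\abs{\ell-\tau k}^2}\). To be precise, we first verify that
\[
A:=\sup_{\ell\geq0}\sum_{k=0}^\infty \e^{-\gamma\abs{\ell-\tau k}^2}<\infty,
\qquad
B:=\sup_{k\geq0}\sum_{\ell=0}^\infty \e^{-\gamma\abs{\ell-\tau k}^2}<\infty .
\]
For \(B\), fix \(k\). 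The points \(\ell\in\mathbb N_0\) are \(1\)-separated, so the sum is dominated by \(\sum_{j\in\mathbb Z}\e^{-\gamma\,\mathrm{dist}(\tau k-j)^2}\), which is at most \(2\sum_{m\geq0}\e^{-\gamma m^2}\). For \(A\), fix \(\ell\). The points \(\tau k\) are \(\tau\)-separated, so the sum is at most \(2\sum_{m\geq0}\e^{-\gamma\tau^2m^2}\). Both bounds depend only on \(\gamma\) and \(\tau\).

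\emph{The case \(q=\infty\).} Here \(\abs{(Gx)_\ell}\leq A\norm{x}_{\ell^\infty}\) for every \(\ell\), so \(G\) is bounded on \(\ell^\infty\).

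\emph{The case \(0<q\leq1\).} This is the step that requires care, because Minkowski's inequality is unavailable. We use instead the \(q\)-triangle inequality \(\abs{\sum a_k}^q\leq\sum\abs{a_k}^q\), which gives
\[
\norm{Gx}_{\ell^q}^q
\leq
\sum_\ell\sum_k \e^{-q\gamma\abs{\ell-\tau k}^2}\abs{x_k}^q
\leq
B_q\norm{x}_{\ell^q}^q .
\]
Here \(B_q\) is the column-sum constant \(B\) computed with \(\gamma\) replaced by \(q\gamma>0\), and the separation argument above shows it is finite.

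\emph{The case \(1<q<\infty\).} Hölder's inequality applied with the splitting \(\e^{-\gamma\abs{\cdot}^2}=\e^{-\gamma\abs{\cdot}^2/q'}\,\e^{-\gamma\abs{\cdot}^2/q}\) yields the Schur test bound
\[
\norm{Gx}_{\ell^q}\leq A^{1/q'}B^{1/q}\norm{x}_{\ell^q}.
\]
Alternatively, Riesz--Thorin interpolation between \(\ell^1\) and \(\ell^\infty\) gives the same conclusion.

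We also need \(Gx\) to be well defined for every \(x\in\ell^q\). This is automatic, since \(\ell^q\subset\ell^\infty\) and \(A<\infty\), so each defining series converges absolutely.

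The only real obstacle is the quasi-Banach range \(q<1\), and it is handled by the \(q\)-subadditivity trick together with the observation that raising the kernel to the power \(q\) simply replaces \(\gamma\) by \(q\gamma\), which keeps the Gaussian decay.
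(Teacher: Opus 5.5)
Your proposal is correct and follows essentially the same route as the paper: uniform bounds on the row and column sums of the translated Gaussian kernel, Schur's test for $1\le q\le\infty$ (your H\"older splitting or Riesz--Thorin amounts to the same thing), and the $q$-triangle inequality with $\gamma$ replaced by $q\gamma$ for $0<q<1$. Your separation argument for the finiteness of the row and column sums, and your remark on absolute convergence of the defining series, make explicit what the paper summarizes as uniform summability of translated Gaussian sequences.
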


\begin{proof}
The uniform summability of translated Gaussian sequences gives
\[
\sup_{\ell\geq0}
\sum_{k=0}^\infty
\e^{-\gamma\abs{\ell-\tau k}^2}
+
\sup_{k\geq0}
\sum_{\ell=0}^\infty
\e^{-\gamma\abs{\ell-\tau k}^2}
<\infty.
\]
Hence the assertion follows from Schur's test when
\(1\leq q\leq\infty\), and from the \(q\)-triangle inequality,
using the same summability with \(\gamma\) replaced by \(q\gamma\),
when \(0<q<1\).
\end{proof}

\medskip
\noindent
For a nonnegative sequence \(v=(v_k)_{k\geq0}\), consider the
diagonal operator
\[
(D_vx)_k:=v_kx_k,
\qquad k\geq0.
\]
The following elementary characterization of diagonal sequence
operators will be used repeatedly.

\begin{lemma}
\label{lem:diagonal-sequence-operator}
Let \(0<q_1,q_2\leq\infty\).

\begin{enumerate}[label=\textup{(\alph*)}]
\item
If \(q_1\leq q_2\), then
\(
D_v:\ell^{q_1}\longrightarrow\ell^{q_2}
\)
is bounded if and only if \(v\in\ell^\infty\), and
\[
\norm{D_v}_{\ell^{q_1}\to\ell^{q_2}}
=
\norm{v}_{\ell^\infty}.
\]
Moreover, \(D_v\) is compact if and only if
\(
v_k\longrightarrow0.
\)

\item
Suppose that \(q_2<q_1\).
Then
\(
D_v:\ell^{q_1}\longrightarrow\ell^{q_2}
\)
is bounded if and only if \(v\in\ell^{q_*}\), and
\[
\norm{D_v}_{\ell^{q_1}\to\ell^{q_2}}
=
\norm{v}_{\ell^{q_*}}.
\]
In this case, boundedness and compactness are equivalent.
\end{enumerate}
\end{lemma}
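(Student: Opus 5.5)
We prove the two parts separately. Throughout we use the single-coordinate test vectors $e_k$, which satisfy $\|D_v e_k\|_{\ell^{q_2}}=v_k$ and $\|e_k\|_{\ell^{q_1}}=1$.

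\emph{Part (a).} Take $q_1\le q_2$. The inclusion $\ell^{q_1}\hookrightarrow\ell^{q_2}$ has norm one, so
\[
\|D_vx\|_{\ell^{q_2}}\le \|v\|_{\ell^\infty}\|x\|_{\ell^{q_2}}\le\|v\|_{\ell^\infty}\|x\|_{\ell^{q_1}}.
\]
Testing on $e_k$ gives the reverse inequality $\|D_v\|\ge\sup_k v_k$. This proves the norm identity.

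For compactness, suppose $v_k\to0$. Let $P_N$ be truncation to the first $N$ coordinates. Then $D_vP_N$ has finite rank, and
\[
\|D_v-D_vP_N\|=\sup_{k\ge N}v_k\to0
\]
by the norm identity just proved. Operator-norm limits of finite-rank operators are compact, and this holds in quasi-Banach spaces as well, by the usual total-boundedness argument with the $s$-triangle inequality. Hence $D_v$ is compact.

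Conversely, suppose $v_{k_j}\ge\varepsilon$ along a subsequence $k_j$. The vectors $e_{k_j}$ form a bounded sequence, and for $i\ne j$,
\[
\|D_ve_{k_i}-D_ve_{k_j}\|_{\ell^{q_2}}\ge \max\{v_{k_i},v_{k_j}\}\ge\varepsilon.
\]
So the images have no convergent subsequence, and $D_v$ is not compact.

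\emph{Part (b), boundedness.} Take $q_2<q_1$, so $q_2<\infty$ and $1/q_2=1/q_1+1/q_*$. Hölder's inequality with the exponents $q_1/q_2$ and $q_*/q_2$, or the sup bound when $q_1=\infty$ (where $q_*=q_2$), gives $\|vx\|_{\ell^{q_2}}\le\|v\|_{\ell^{q_*}}\|x\|_{\ell^{q_1}}$.

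For sharpness with $q_1<\infty$, take $x_k=v_k^{q_*/q_1}$ restricted to $k\le N$. Then $v_k x_k=v_k^{q_*/q_2}$, because
\[
1+\frac{q_*}{q_1}=q_*\Bigl(\frac1{q_*}+\frac1{q_1}\Bigr)=\frac{q_*}{q_2}.
\]
Hence
\[
\|D_vx\|_{\ell^{q_2}}=\Bigl(\sum_{k\le N}v_k^{q_*}\Bigr)^{1/q_2},\qquad
\|x\|_{\ell^{q_1}}=\Bigl(\sum_{k\le N}v_k^{q_*}\Bigr)^{1/q_1}.
\]
The ratio is $\bigl(\sum_{k\le N}v_k^{q_*}\bigr)^{1/q_*}$. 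When $q_1=\infty$, the test vector $x=\chi_{[0,N]}$ gives the same conclusion directly. Letting $N\to\infty$ yields $\|D_v\|=\|v\|_{\ell^{q_*}}$ and shows that boundedness forces $v\in\ell^{q_*}$.

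\emph{Part (b), compactness.} Since $q_*<\infty$, membership $v\in\ell^{q_*}$ implies that the tail $\|v\chi_{[N,\infty)}\|_{\ell^{q_*}}\to0$. By the norm identity, $\|D_v-D_vP_N\|\to0$, so $D_v$ is a norm limit of finite-rank operators and is compact. Thus boundedness and compactness are equivalent in this case.

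The only delicate points are the extremizer in (b) and the endpoint $q_1=\infty$, both handled above. No step is a serious obstacle.
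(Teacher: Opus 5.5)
Your proof is correct and follows essentially the same route as the paper. In (a) you use the norm-one embedding, the coordinate vectors $e_k$, finite-rank truncation and the separated images $D_ve_{k_j}$; in (b) you use H\"older, the extremal vectors $x_k=v_k^{q_*/q_1}$ (or indicator vectors when $q_1=\infty$), and tail truncation. The only cosmetic difference is that you test on initial segments $[0,N]$ instead of arbitrary finite sets $E$, which yields the same supremum; you should add the trivial caveat that the normalization needs $\sum_{k\le N}v_k^{q_*}>0$.
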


\begin{proof}
Suppose first that \(q_1\leq q_2\). Since
\(\ell^{q_1}\hookrightarrow\ell^{q_2}\),
\[
\norm{D_vx}_{\ell^{q_2}}
\leq
\norm{v}_{\ell^\infty}\norm{x}_{\ell^{q_2}}
\leq
\norm{v}_{\ell^\infty}\norm{x}_{\ell^{q_1}}.
\]
Testing on the coordinate vectors \(e_k\) gives the reverse
inequality and hence
\[
\norm{D_v}_{\ell^{q_1}\to\ell^{q_2}}
=
\norm{v}_{\ell^\infty}.
\]
For
\[
v_k^{[N]}:=v_k\chi_{\{k\leq N\}},
\]
the operator \(D_{v^{[N]}}\) has finite rank and
\[
\norm{D_v-D_{v^{[N]}}}_{\ell^{q_1}\to\ell^{q_2}}
=
\sup_{k>N}v_k.
\]
Thus \(v_k\to0\) implies compactness. Conversely, if
\(v_k\not\to0\), there exist \(\varepsilon>0\) and distinct indices
\(k_j\) such that \(v_{k_j}\geq\varepsilon\). The sequence
\((D_ve_{k_j})_j\) is then uniformly separated in \(\ell^{q_2}\), so
\(D_v\) is not compact.

\smallskip
\noindent
Suppose now that \(q_2<q_1\).
H\"older's inequality gives
\[
\norm{D_vx}_{\ell^{q_2}}
\leq
\norm{v}_{\ell^{q_*}}
\norm{x}_{\ell^{q_1}}.
\]
Conversely, if \(q_1<\infty\), then for every finite
\(E\subset\mathbb N_0\) such that
\[
\sum_{j\in E}v_j^{q_*}>0,
\]
the sequence
\[
x_k
:=
\frac{
v_k^{\frac{q_*}{q_1}}\chi_E(k)
}{
\left(\sum_{j\in E}v_j^{q_*}\right)^{\frac{1}{q_1}}
}
\]
satisfies \(\norm{x}_{\ell^{q_1}}=1\) and
\[
\norm{D_vx}_{\ell^{q_2}}
=
\left(
\sum_{k\in E}v_k^{q_*}
\right)^{\frac{1}{q_*}}.
\]
When \(q_1=\infty\), the same conclusion follows by testing on
\(x=\chi_E\). Taking the supremum over finite sets \(E\) proves
\[
\norm{D_v}_{\ell^{q_1}\to\ell^{q_2}}
=
\norm{v}_{\ell^{q_*}}.
\]
Finally,
\[
\norm{D_v-D_{v^{[N]}}}_{\ell^{q_1}\to\ell^{q_2}}
=
\left(
\sum_{k>N}v_k^{q_*}
\right)^{\frac{1}{q_*}}
\longrightarrow0.
\]
Hence every bounded \(D_v\) is compact when \(q_2<q_1\).
\end{proof}

\medskip
\noindent
For \(\gamma_0\geq0\), \(\gamma_1,\tau>0\), and \(\sigma \in\mathbb R\), define,
initially on finitely supported sequences,
\[
(\mathcal T x)_\ell
:=
\sum_{k=0}^\infty
\e^{-\gamma_1\abs{\ell-\tau k}^2}
\e^{-\gamma_0k^2}
(1+k)^{\sigma} x_k,
\qquad \ell\in\mathbb N_0.
\]
For \(N\in\mathbb N_0\), let
\[
(\mathcal T_Nx)_\ell
:=
\sum_{k>N}
\e^{-\gamma_1\abs{\ell-\tau k}^2}
\e^{-\gamma_0k^2}
(1+k)^{\sigma} x_k.
\]
Thus \(\mathcal T-\mathcal T_N\) has finite rank.

\begin{proposition}[Drifted Gaussian sequence operators]
\label{prop:drifted-gaussian-operator}
Let \(\gamma_0\geq0\), \(\gamma_1,\tau>0\), \(\sigma\in\mathbb R\), and
\(0<q_1,q_2\leq\infty\).

\begin{enumerate}[label=\textup{(\alph*)}]
\item
If \(\gamma_0>0\), then
\(
\mathcal T:
\ell^{q_1}
\longrightarrow
\ell_{\mathrm{sh}}^{q_2}
\)
is compact. Moreover, 
\[
\norm{\mathcal T_N}_{\ell^{q_1}\to\ell_{\mathrm{sh}}^{q_2}}
\lesssim \e^{- \frac{\gamma_0}{2} N^2},
\qquad N\in\mathbb N_0.
\]

\item
Suppose that \(\gamma_0=0\) and \(q_1\leq q_2\). Then
\(
\mathcal T:
\ell^{q_1}
\longrightarrow
\ell_{\mathrm{sh}}^{q_2}
\)
is bounded if and only if \(\sigma + \frac{1}{q_2}\leq0\), and compact if and only if
\(\sigma + \frac{1}{q_2}<0\). Moreover, whenever \(\sigma+ \frac{1}{q_2}\leq0\),
\[
\norm{\mathcal T_N}_{\ell^{q_1}\to\ell_{\mathrm{sh}}^{q_2}}
\asymp
(1+N)^{\sigma + \frac{1}{q_2}}.
\]

\item
Suppose that \(\gamma_0=0\) and \(q_2<q_1\).
Then boundedness and compactness of
\(
\mathcal T:
\ell^{q_1}
\longrightarrow
\ell_{\mathrm{sh}}^{q_2}
\)
are equivalent, and they hold if and only if
\(
\sigma + \frac{1}{q_2} + \frac1{q_*} < 0.
\)
Under this condition,
\[
\norm{\mathcal T_N}_{\ell^{q_1}\to\ell_{\mathrm{sh}}^{q_2}}
\asymp
(1+N)^{\sigma + \frac{1}{q_2} + \frac1{q_*}}.
\]
\end{enumerate}
The implicit constants depend only on
\(\gamma_0,\gamma_1,\tau,\sigma,q_1\), and \(q_2\).
\end{proposition}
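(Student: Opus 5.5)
The plan is to compare \(\mathcal T\) with a diagonal operator by transferring the shell weight \((1+\ell)\) through the Gaussian kernel. The key observation is that the kernel \(\e^{-\gamma_1\abs{\ell-\tau k}^2}\) localizes \(\ell\) to within \(O(1)\) of \(\tau k\). Consequently \(1+\ell\asymp 1+k\) on the essential support, up to Gaussian-absorbable factors \((1+\abs{\ell-\tau k})^{c}\).

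Write \(u_k:=\e^{-\gamma_0k^2}(1+k)^\sigma\). For \(q_2<\infty\) set
\[
v_k:=u_k(1+k)^{1/q_2},
\]
and for \(q_2=\infty\) set \(v_k:=u_k\). The main claim to establish is the two-sided estimate
\[
\norm{\mathcal T x}_{\ell^{q_2}_{\mathrm{sh}}}\lesssim \norm{G'D_v\abs{x}}_{\ell^{q_2}}
\]
for a modified drifted Gaussian matrix \(G'\) with a slightly smaller \(\gamma\). This uses
\[
(1+\ell)^{1/q_2}\le C(1+k)^{1/q_2}(1+\abs{\ell-\tau k})^{1/q_2}
\]
and absorbs the polynomial factor into the Gaussian. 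Combined with Lemma~\ref{lem:drifted-gaussian-matrix}, this gives
\[
\norm{\mathcal T}_{\ell^{q_1}\to\ell^{q_2}_{\mathrm{sh}}}\lesssim\norm{D_v}_{\ell^{q_1}\to\ell^{q_2}},
\]
and likewise \(\norm{\mathcal T_N}\lesssim\norm{D_{v\chi_{\{k>N\}}}}\). Lemma~\ref{lem:diagonal-sequence-operator} then yields all sufficiency statements and the upper tail bounds. In case (a), \(\sup_{k>N}v_k\) and \(\big(\sum_{k>N}v_k^{q_*}\big)^{1/q_*}\) are both \(\lesssim \e^{-\gamma_0N^2/2}\), since \(\e^{-\gamma_0k^2/2}\) dominates every polynomial. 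In case (b) the bound is \(\sup_{k>N}(1+k)^{\sigma+1/q_2}\asymp(1+N)^{\sigma+1/q_2}\) when the exponent is \(\le0\). In case (c), \(\sum_{k>N}(1+k)^{q_*(\sigma+1/q_2)}\) converges iff \(q_*(\sigma+1/q_2)<-1\) and has tail \(\asymp(1+N)^{q_*(\sigma+1/q_2)+1}\); taking the \(1/q_*\) power gives the stated rate. Compactness follows since \(\mathcal T-\mathcal T_N\) has finite rank and \(\norm{\mathcal T_N}\to0\). For \(q_2<1\) I would use the \(q_2\)-triangle inequality instead of Schur, as in Lemma~\ref{lem:drifted-gaussian-matrix}.

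For the lower bounds, the matrix is nonnegative, so \((\mathcal Tx)_\ell\ge\sum_k \e^{-\gamma_1\abs{\ell-\tau k}^2}u_kx_k\) for \(x\ge0\). Restricting to sequences supported on a sparse set \(k\in m\mathbb N_0\) with \(m\tau\ge 2\) (and a residue class), each \(k\) has a distinct nearest integer \(\ell_k\) with \(\abs{\ell_k-\tau k}\le1/2\). Keeping only these \(\ell\) gives
\[
\norm{\mathcal T x}_{\ell^{q_2}_{\mathrm{sh}}}\gtrsim \norm{D_v x}_{\ell^{q_2}},
\]
since \(1+\ell_k\asymp1+k\). Splitting \(\mathbb N_0\) into finitely many residue classes mod \(m\) loses only constants, since \(v_k\) is comparable on neighbours. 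Thus
\[
\norm{D_{v\chi_{\{k>N\}}}}\lesssim\norm{\mathcal T_N}.
\]
This yields necessity of \(\sigma+1/q_2\le0\) in (b) and of \(\sigma+1/q_2+1/q_*<0\) in (c), together with the lower tail rates.

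For non-compactness in (b) at \(\sigma+1/q_2=0\), I would use \(\norm{\mathcal T_N}\asymp1\) and the fact that the coordinate images \(\mathcal Te_{mk}\) have essentially disjoint supports. These are uniformly separated bounded images, as in the diagonal lemma. In (c), boundedness implies compactness via the tail decay.

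The main obstacle is the upper comparison in the quasi-Banach range \(q_1\) or \(q_2<1\), with the weight transfer uniform in \(N\). There the Schur argument fails, and I would instead bound \(\abs{(\mathcal T x)_\ell}^{q_2}\le\sum_k(\cdots)^{q_2}\) when \(q_2\le1\), then apply the \(q_1\le q_2\) or Hölder diagonal estimate. For \(q_1<1\le q_2\), I would first use \(\ell^{q_1}\hookrightarrow\ell^{\max(q_1,\cdot)}\) where allowed. Some care is also needed in case (c) with \(q_1=\infty\).
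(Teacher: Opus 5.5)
Your proposal is correct and is essentially the paper's proof: the weight transfer $(1+\ell)^{1/q_2}\lesssim(1+k)^{1/q_2}(1+\abs{\ell-\tau k})^{1/q_2}$ combined with Lemmas~\ref{lem:drifted-gaussian-matrix} and~\ref{lem:diagonal-sequence-operator} gives the upper bounds, and distinct output indices $\ell_k\approx\tau k$ on finitely many sparse classes give the lower bounds. The paper takes $m_k=\lfloor\tau k\rfloor$ with a partition into injectivity classes instead of residue classes, and it recombines the pieces through the explicit $\ell^\infty$ and $\ell^{q_*}$ formulas of Lemma~\ref{lem:diagonal-sequence-operator}, so comparability of $v_k$ on neighbours is neither needed nor true when $\gamma_0>0$. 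The quasi-Banach ``obstacle'' you flag is already covered by those two lemmas, which hold for all $0<q_1,q_2\le\infty$; for non-compactness at $\sigma+\frac1{q_2}=0$, the paper simply notes that $\mathcal Te_k\to0$ coordinatewise while $\norm{\mathcal Te_k}_{\ell^{q_2}_{\mathrm{sh}}}\asymp1$, which avoids quantifying your ``essentially disjoint supports''.
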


\begin{proof}
Set
\[
v_k
:=
\e^{-\gamma_0k^2}
(1+k)^{\sigma+\frac1{q_2}},
\qquad k\geq0.
\]
We first show that
\begin{equation}
\label{eq:drifted-diagonal-equivalence}
\norm{\mathcal T}_{\ell^{q_1}\to\ell_{\mathrm{sh}}^{q_2}}
\asymp
\norm{D_v}_{\ell^{q_1}\to\ell^{q_2}}.
\end{equation}
Since
\[
1+\ell
\lesssim
(1+k)\bigl(1+\abs{\ell-\tau k}\bigr) \qquad \text{and} \qquad
(1+s)^{\frac{1}{q_2}}\e^{-\frac{\gamma_1}{2}s^2}
\lesssim 1,
\qquad s\geq0,
\]
we have
\[
(1+\ell)^{\frac{1}{q_2}}
\e^{-\gamma_1\abs{\ell-\tau k}^2}
\lesssim
(1+k)^{\frac{1}{q_2}}
\e^{-\frac{\gamma_1}{2}\abs{\ell-\tau k}^2}.
\]
Consequently,
\[
\begin{aligned}
(1+\ell)^{\frac{1}{q_2}}\abs{(\mathcal Tx)_\ell}
&\lesssim
\sum_{k=0}^\infty
\e^{-\frac{\gamma_1}{2}\abs{\ell-\tau k}^2}
v_k\abs{x_k}.
\end{aligned}
\]
Lemma~\ref{lem:drifted-gaussian-matrix}, applied with
\(\gamma= \frac{\gamma_1}{2}\), therefore gives
\[
\norm{\mathcal Tx}_{\ell_{\mathrm{sh}}^{q_2}}
\lesssim
\norm{D_v\abs{x}}_{\ell^{q_2}}.
\]
Consequently,
\begin{equation}
\label{eq:drifted-diagonal-upper}
\norm{\mathcal T}_{\ell^{q_1}\to\ell_{\mathrm{sh}}^{q_2}}
\lesssim
\norm{D_v}_{\ell^{q_1}\to\ell^{q_2}}.
\end{equation}

\smallskip
\noindent
For the reverse estimate, set
\[
m_k:=\lfloor\tau k\rfloor,
\qquad k\geq0.
\]
Then
\[
\abs{m_k-\tau k}<1
\qquad\text{and}\qquad
1+m_k\asymp1+k.
\]
Moreover, if \(m_k=m_{k'}\), then
\[
\tau\abs{k-k'}<1.
\]
Thus the fibers of \(k\mapsto m_k\) have uniformly bounded
cardinality, and \(\mathbb N_0\) can be partitioned into finitely
many sets \(E_1,\ldots,E_J\) such that \(k\mapsto m_k\) is injective
on each \(E_j\).

\smallskip
\noindent
Let \(x\) be a nonnegative sequence supported on \(E_j\). For \(k\in E_j\),
\[
(\mathcal Tx)_{m_k}
\geq
\e^{-\gamma_1}
\e^{-\gamma_0k^2}
(1+k)^\sigma x_k.
\]
Since the coordinates \(m_k\), \(k\in E_j\), are distinct and
\(1+m_k\asymp1+k\), we obtain
\[
\norm{\mathcal Tx}_{\ell_{\mathrm{sh}}^{q_2}}
\geq
\left\|
\left(
(1+m_k)^{\frac1{q_2}}
(\mathcal Tx)_{m_k}
\right)_{k\in E_j}
\right\|_{\ell^{q_2}} \gtrsim
\left\|
\left(
v_kx_k
\right)_{k\in E_j}
\right\|_{\ell^{q_2}}
=
\norm{D_vx}_{\ell^{q_2}},
\]
with the usual supremum interpretation when \(q_2=\infty\).
Applying this estimate to \(\abs{x}\chi_{E_j}\) yields
\[
\norm{D_{v\chi_{E_j}}}_{\ell^{q_1}\to\ell^{q_2}}
\lesssim
\norm{\mathcal T}_{\ell^{q_1}\to\ell_{\mathrm{sh}}^{q_2}},
\qquad 1\leq j\leq J.
\]
By Lemma~\ref{lem:diagonal-sequence-operator},
\[
\norm{D_v}_{\ell^{q_1}\to\ell^{q_2}}
=
\begin{cases}
\displaystyle
\max_{1\leq j\leq J}
\norm{D_{v\chi_{E_j}}}_{\ell^{q_1}\to\ell^{q_2}},
& q_1\leq q_2,\\[4mm]
\displaystyle
\left(
\sum_{j=1}^J
\norm{D_{v\chi_{E_j}}}_{\ell^{q_1}\to\ell^{q_2}}^{q_*}
\right)^{\frac1{q_*}},
& q_2<q_1.
\end{cases}
\]
Since \(J\) is fixed, it follows that
\[
\norm{D_v}_{\ell^{q_1}\to\ell^{q_2}}
\lesssim
\norm{\mathcal T}_{\ell^{q_1}\to\ell_{\mathrm{sh}}^{q_2}}.
\]
Together with \eqref{eq:drifted-diagonal-upper}, this proves
\eqref{eq:drifted-diagonal-equivalence}.

\medskip
\noindent
Applying the same argument to the sets
\[
E_j\cap\{k>N\},
\qquad 1\leq j\leq J,
\]
gives
\begin{equation}
\label{eq:drifted-tail-equivalence}
\norm{\mathcal T_N}_{\ell^{q_1}\to\ell_{\mathrm{sh}}^{q_2}}
\asymp
\norm{D_{v^{(N)}}}_{\ell^{q_1}\to\ell^{q_2}},
\qquad
v_k^{(N)}:=v_k\chi_{\{k>N\}},
\end{equation}
with constants independent of \(N\).

\smallskip
\noindent
Suppose first that \(\gamma_0>0\). The Gaussian factor in \(v_k\)
dominates every polynomial power. Hence
Lemma~\ref{lem:diagonal-sequence-operator} and
\eqref{eq:drifted-tail-equivalence} give
\[
\norm{\mathcal T_N}_{\ell^{q_1}\to\ell_{\mathrm{sh}}^{q_2}}
\lesssim
\e^{- \frac{\gamma_0}{2} N^2}.
\]
 Since
\(\mathcal T-\mathcal T_N\) has finite rank, \(\mathcal T\) is
compact. This proves \textup{(a)}.

\smallskip
\noindent
Suppose next that \(\gamma_0=0\) and \(q_1\leq q_2\). Then
\[
v_k=(1+k)^{\sigma+\frac1{q_2}}.
\]
By Lemma~\ref{lem:diagonal-sequence-operator}\textup{(a)} and
\eqref{eq:drifted-diagonal-equivalence}, \(\mathcal T\) is bounded if
and only if
\(
\sigma+\frac1{q_2}\leq0.
\)
Under this condition, by \eqref{eq:drifted-tail-equivalence},
\[
\norm{\mathcal T_N}_{\ell^{q_1}\to\ell_{\mathrm{sh}}^{q_2}}
\asymp
\sup_{k>N}(1+k)^{\sigma+\frac1{q_2}}\\
\asymp
(1+N)^{\sigma+\frac1{q_2}}.
\]
Thus \(\mathcal T\) is compact when
\(\sigma+\frac{1}{q_2}<0\).

\medskip
\noindent
Conversely, suppose that
\(
\sigma+\frac1{q_2}=0.
\)
Then \(v_k=1\). Since each \(e_k\) is supported on one of the sets
\(E_j\), the upper and reverse estimates used in proving
\eqref{eq:drifted-diagonal-equivalence} give
\[
\norm{\mathcal Te_k}_{\ell_{\mathrm{sh}}^{q_2}}
\asymp
\norm{D_ve_k}_{\ell^{q_2}}
=1,
\qquad k\geq0.
\]
Moreover, for every fixed \(\ell\),
\[
(\mathcal Te_k)_\ell
=
\e^{-\gamma_1\abs{\ell-\tau k}^2}(1+k)^\sigma
\longrightarrow0.
\]
If \(\mathcal T\) were compact, then some subsequence
\((\mathcal Te_{k_j})_j\) would converge in
\(\ell_{\mathrm{sh}}^{q_2}\). Convergence in this space implies
coordinatewise convergence, so its limit would necessarily be zero.
This contradicts
\(
\norm{\mathcal Te_{k_j}}_{\ell_{\mathrm{sh}}^{q_2}}
\asymp1.
\)
Hence \(\mathcal T\) is not compact.

\medskip
\noindent
Finally, suppose that \(\gamma_0 = 0\) and \(q_2<q_1\). By
Lemma~\ref{lem:diagonal-sequence-operator}\textup{(b)} and
\eqref{eq:drifted-diagonal-equivalence}, \(\mathcal T\) is bounded
if and only if
\[
\left(
(1+k)^{\sigma+\frac1{q_2}}
\right)_{k\geq0}
\in\ell^{q_*},
\qquad \text{
or equivalently,} \qquad
q_*
\left(
\sigma+\frac1{q_2}
\right)
<-1.
\]
This is precisely
\[
\sigma+\frac1{q_2}+\frac1{q_*}<0.
\]
Under this condition, \eqref{eq:drifted-tail-equivalence} and an integral comparison give
\[
\norm{\mathcal T_N}_{\ell^{q_1}\to\ell_{\mathrm{sh}}^{q_2}}
\asymp
\left(
\sum_{k>N}
(1+k)^{q_*\left(\sigma+\frac1{q_2}\right)}
\right)^{\frac{1}{q_*}} \asymp
(1+N)^{\sigma+\frac1{q_2}+\frac1{q_*}}
\longrightarrow0.
\]
Thus \(\mathcal T\) is compact whenever it is bounded, proving
\textup{(c)}.
\end{proof}

%================================================
\section{Boundedness of Fock projections}
\label{sec:boundedness}
%================================================

We now prove the boundedness classification in
Theorem~\ref{thm:main-boundedness}. By the pointwise domination
\eqref{eq:pointwise-domination}, boundedness of $P_t^+$ implies
boundedness of $P_t$. It therefore suffices to prove two substantive
implications. First, assuming that $P_t$ is bounded, four testing
families yield the source restrictions, the Gaussian condition, and
the two endpoint inequalities
(Proposition~\ref{prop:boundedness-necessary}). Conversely, under
these parameter conditions, the unified shell estimate
(Lemma~\ref{lem:unified-shell-estimate}) and annular discretization
reduce $P_t^+$ to a drifted Gaussian sequence operator, whose
boundedness gives the converse implication
(Proposition~\ref{prop:boundedness-sufficient}).

\subsection{Necessary conditions for \(P_t\)}
\label{subsec:boundedness-necessity}

The Gaussian-weighted monomial and exponential-packet tests below
are related to the Banach-range arguments in
\cite[Section~3]{LiuShiWei2027}. We include streamlined
self-contained proofs, since the same tests apply throughout the
quasi-Banach target range. The source-exponent obstructions, which do
not arise under the Banach-range assumptions of that paper, are
obtained from thin radial shells and thin angular sectors.

\begin{proposition}
\label{prop:boundedness-necessary}
Suppose that
\(
P_t:
\Lpqa{\alpha}{p_1}{q_1}
\longrightarrow
\Lpqa{\beta}{p_2}{q_2}
\)
is bounded. Then
\[
p_1,q_1\geq1
\qquad\text{and}\qquad
\Delta(t;\alpha,\beta)\leq0.
\]
If \(\Delta(t;\alpha,\beta)=0\), then
\[
q_1\leq q_2
\qquad\text{and}\qquad
d(p_2,q_2)\leq d(p_1,q_1).
\]
\end{proposition}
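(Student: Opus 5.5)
We test $P_t$ on four families of functions, as announced in the introduction, and compare source and target quasi-norms using \eqref{eq:monomial-norm}, \eqref{eq:exponential-kernel-norm} and Lemma~\ref{lem:fock-test-formulas}. Boundedness of $P_t$ on bounded compactly supported functions gives $\norm{P_tf}_{p_2,q_2,\beta}\le C\norm{f}_{p_1,q_1,\alpha}$. Lemma~\ref{lem:point-evaluation} then turns this into a pointwise bound, since $P_tf$ is entire.

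\emph{Source restrictions.} For $q_1\ge1$, take thin radial shells $f_\delta=\chi_{\{r_0\le|w|<r_0+\delta\}}$. Then $P_tf_\delta(0)=\lambda_t(\text{shell})\asymp\delta$, while $\norm{f_\delta}_{p_1,q_1,\alpha}\asymp\delta^{1/q_1}$. By Lemma~\ref{lem:point-evaluation}, $|P_tf_\delta(0)|\lesssim\norm{P_tf_\delta}_{p_2,q_2,\beta}$, which gives $\delta\lesssim\delta^{1/q_1}$ as $\delta\to0$, hence $q_1\ge1$. For $p_1\ge1$, take thin sectors $f_\delta=\chi_{\{1\le|w|\le2,\ |\arg w|<\delta\}}$. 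Then $P_tf_\delta(0)\asymp\delta$ and $\norm{f_\delta}_{p_1,q_1,\alpha}\asymp\delta^{1/p_1}$, which forces $p_1\ge1$. (If $p_1=\infty$ or $q_1=\infty$ there is nothing to prove.)

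\emph{Gaussian condition and $q_1\le q_2$.} Apply Lemma~\ref{lem:fock-test-formulas}(a) with $g_n=w^n\e^{\gamma|w|^2}$, choosing $\gamma<t$. This gives $P_tg_n=(t/(t-\gamma))^{n+1}z^n$. Since $M_{p_1}(g_n,r)=r^n\e^{\gamma r^2}$, the source norm is a monomial norm with parameter $\alpha-2\gamma$ (we need $\gamma<\alpha/2$). Using \eqref{eq:monomial-norm}, the ratio of target to source norms is asymptotic to
\[
\Bigl(\tfrac{t^2(\alpha-2\gamma)}{(t-\gamma)^2\beta}\Bigr)^{n/2}(1+n)^{\frac1{2q_2}-\frac1{2q_1}}.
\]
We then optimize over $\gamma$. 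The base $t^2(\alpha-2\gamma)/((t-\gamma)^2\beta)$ is maximized at $\gamma=t-\alpha$, provided this value is admissible. Its maximum value, $t^2/(\alpha\beta)$ or a related expression, exceeds $1$ exactly when $\Delta>0$. The main obstacle is this algebra together with the admissibility of $\gamma$ (the constraints $\gamma<\min\{t,\alpha/2\}$). We check it by writing $h(\gamma)=\beta(t-\gamma)^2-t^2(\alpha-2\gamma)$. At $\gamma_0=t-t^2/\beta$ one has $h(\gamma_0)=\Delta\cdot t^2/\beta$, and one verifies that $\gamma_0<\alpha/2$ precisely when $\Delta>0$ or $\Delta=0$ (using $\alpha\beta=2\beta t-t^2$). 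Thus boundedness forces $h\ge0$ everywhere, i.e.\ $\Delta\le0$. When $\Delta=0$, choosing $\gamma=\gamma_0$ makes the exponential base exactly $1$, and the polynomial factor $(1+n)^{\frac1{2q_2}-\frac1{2q_1}}$ must stay bounded, forcing $q_1\le q_2$.

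\emph{Defect inequality.} At $\Delta=0$, with the same $\gamma_0$, apply Lemma~\ref{lem:fock-test-formulas}(b) to $e_\zeta=\e^{\lambda w\bar\zeta}\e^{\gamma_0|w|^2}$. The source norm equals $\norm{\e^{\lambda w\bar\zeta}}_{p_1,q_1,\alpha-2\gamma_0}$, and the image is a constant times $\e^{\frac{t\lambda}{t-\gamma_0}z\bar\zeta}$. By \eqref{eq:exponential-kernel-norm} the ratio is
\[
\exp\bigl(c|\zeta|^2\bigr)(1+|\zeta|)^{d(p_2,q_2)-d(p_1,q_1)}.
\]
The same identity $h(\gamma_0)=0$ shows that the Gaussian exponent $c$ vanishes. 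Letting $|\zeta|\to\infty$ then yields $d(p_2,q_2)\le d(p_1,q_1)$. Finally, we justify applying $P_t$ to these non-compactly supported test functions by truncating to discs and passing to the limit, using monotone convergence on the source side and locally uniform convergence together with Fatou's lemma on the target side.
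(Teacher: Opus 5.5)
Your proof follows the paper's argument step for step. The shared structure is: point evaluation at the origin on thin radial shells and thin sectors forces $q_1,p_1\ge1$; the Gaussian-weighted monomials $w^n\e^{\gamma|w|^2}$ give $\Delta(t;\alpha,\beta)\le0$ and, at the boundary, $q_1\le q_2$; and the Gaussian-weighted exponential packets with $\gamma=\alpha-t$ give $d(p_2,q_2)\le d(p_1,q_1)$. Your closing truncation-plus-Fatou remark justifies applying $P_t$ to the non-compactly supported test functions. The paper leaves this point implicit, so it is a worthwhile addition.

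The middle step, however, contains algebraic slips that need fixing.

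\emph{The maximizer.} The base $\frac{t^2(\alpha-2\gamma)}{(t-\gamma)^2\beta}$ is maximized at $\gamma=\alpha-t$, not at $t-\alpha$. This point is admissible only when $\alpha<2t$, and the maximum value is $\frac{t^2}{\beta(2t-\alpha)}$, not $\frac{t^2}{\alpha\beta}$. This is exactly the paper's route: it first rules out $\alpha\ge2t$ by letting $\gamma\uparrow t$, and then takes $\gamma=\alpha-t$.

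\emph{The sign of $h(\gamma_0)$.} In your alternative check the correct value is
\[
h(\gamma_0)=-\frac{t^2}{\beta}\,\Delta(t;\alpha,\beta),
\qquad
\gamma_0=t-\frac{t^2}{\beta}.
\]
With the sign you wrote, the inference from $h\ge0$ would give $\Delta\ge0$ rather than $\Delta\le0$.

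\emph{Admissibility of $\gamma_0$.} The condition $\gamma_0<\alpha/2$ is equivalent to $\Delta>-t^2$, not to $\Delta\ge0$. Only the implication from $\Delta\ge0$ is needed, so this is an overstatement rather than an error in the logic.

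Once corrected, your choice of $\gamma_0$ is a legitimate and slightly more uniform variant of the paper's argument. If $\Delta>0$, then $\gamma_0$ is admissible and $h(\gamma_0)<0$, so the base exceeds $1$ and boundedness fails. The case $\alpha\ge2t$ forces $\Delta>0$, so it needs no separate treatment. At $\Delta=0$ one has $\gamma_0=\alpha-t$, so your remaining steps coincide with the paper's.
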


\begin{proof}
We divide the proof into three steps. 

\medskip
\noindent
\emph{Step 1. Source exponents.} We show that \(p_1 \geq 1 \) and \( q_1 \geq 1\). Since each assertion is
automatic when the corresponding exponent is infinite, it suffices to
consider \(q_1<\infty\) when proving \(q_1\geq1\), and \(p_1<\infty\)
when proving \(p_1\geq1\).

\smallskip
\noindent
For \(0<\varepsilon< 1\), using 
\(f_\varepsilon
:=
\chi_{\{1\leq\abs{w}<1+\varepsilon\}}\), we get
\[
\norm{f_\varepsilon}_{p_1,q_1,\alpha} = \left( q_1 \alpha \int_1^{1 + \varepsilon} e^{-\frac{q_1 \alpha}{2} r^2}r\dd r \right)^{\frac{1}{q_1}}
\lesssim
\varepsilon^{\frac{1}{q_1}},
\]
whereas
\[
(P_tf_\varepsilon)(0)
=
2t\int_1^{1+\varepsilon}\e^{-tr^2}r\,dr
\gtrsim
\varepsilon.
\]
Since \(P_tf_\varepsilon\) is entire, Lemma~\ref{lem:point-evaluation}
and the boundedness of \(P_t\) give
\[
|(P_tf_\varepsilon)(0)| \lesssim \|P_t\|_{\Lpqa{\alpha}{p_1}{q_1}
\longrightarrow
\Lpqa{\beta}{p_2}{q_2}} \|f_{\varepsilon\|_{p_1, q_1, \alpha}}, \qquad \text{hence, } \qquad
\varepsilon
\lesssim
\varepsilon^{\frac{1}{q_1}}.
\]
Letting \(\varepsilon\to0^+\) yields \(q_1\geq1\).

\smallskip
\noindent
Next, for \(0 < \varepsilon < 1\), using \(f_\varepsilon:=\chi_{E_\varepsilon}\) with
\(
E_\varepsilon
:=
\{
re^{i\theta}:0\leq r<1,\ 0\leq\theta<\varepsilon
\}
\),
we obtain
\[
\norm{f_\varepsilon}_{p_1,q_1,\alpha} \ \lesssim \ 
\varepsilon^{\frac{1}{p_1}},
\]
while
\[
(P_tf_\varepsilon)(0)
=
\frac{t}{\pi}
\int_0^1\int_0^\varepsilon
\e^{-tr^2}r\,d\theta\,dr
\ \gtrsim \
\varepsilon.
\]
The preceding point-evaluation argument gives
\[
\varepsilon
\lesssim
\varepsilon^{\frac{1}{p_1}},
\]
and letting \(\varepsilon\to0^+\) yields \(p_1\geq1\).

\medskip
\noindent
\emph{Step 2. Gaussian obstructions.}
We show that \(\Delta(t; \alpha, \beta) \leq 0\).
Let
\[
\gamma<\min\left\{t,\frac{\alpha}{2}\right\},
\qquad
\qquad \text{and} \qquad
f_{\gamma,n}(w)
:=
w^n\e^{\gamma\abs{w}^2},
\qquad n\in\mathbb N.
\]
By \eqref{eq:monomial-norm},
\[
\norm{f_{\gamma,n}}_{p_1,q_1,\alpha}
\asymp
\left(\frac{n!}{(\alpha - 2\gamma)^n}\right)^{\frac{1}{2}}
(1+n)^{\frac1{2q_1}-\frac14}.
\]
Lemma~\ref{lem:fock-test-formulas} gives
\[
P_tf_{\gamma,n}(z)
=
\left(\frac{t}{t-\gamma}\right)^{n+1}z^n,
\]
and hence, again by \eqref{eq:monomial-norm},
\[
\norm{P_tf_{\gamma,n}}_{p_2,q_2,\beta}
\asymp
\left(\frac{t}{t-\gamma}\right)^{n+1}
\left(\frac{n!}{\beta^n}\right)^{\frac{1}{2}}
(1+n)^{\frac1{2q_2}-\frac14}.
\]
The boundedness of \(P_t\) therefore implies
\begin{equation}
\label{eq:monomial-base-obstruction}
\left(\frac{t}{t-\gamma}\right)^{n+1} \left(\frac{\alpha - 2\gamma}{\beta}\right)^{\frac{n}{2}}
(1+n)^{\frac1{2q_2}-\frac1{2q_1}}
\lesssim 1.
\end{equation}
Taking \(n\)-th roots in \eqref{eq:monomial-base-obstruction} and letting \(n \to \infty\), we obtain
\[
\frac{t^2 (\alpha - 2 \gamma)}{(t-\gamma)^2\beta} 
\leq 1.
\]
If \(\alpha\geq2t\), letting \(\gamma\uparrow t\) in the last inequality gives a contradiction. Thus
\(\alpha<2t\), and we may take \(\gamma=\alpha-t\). Then 
\(
\alpha-2\gamma=t-\gamma=2t-\alpha,
\)
hence,
\[
\frac{t^2}{\beta(2t-\alpha)}\leq1,
\qquad \text{
or equivalently,} \qquad
\Delta(t;\alpha,\beta)
=
t^2-\beta(2t-\alpha)
\leq0.
\]

\medskip
\noindent
\emph{Step 3. Obstructions at the Gaussian boundary.} Assume that \(\Delta(t;\alpha,\beta)=0\). Then \(
t^2=\beta(2t-\alpha)
\). We show that
\[
q_1\leq q_2
\qquad\text{and}\qquad
d(p_2,q_2)\leq d(p_1,q_1).
\]
Taking \(\gamma=\alpha-t\) in
\eqref{eq:monomial-base-obstruction}, the factor that is
exponential in \(n\) is equal to one. Hence
\[
(1+n)^{\frac1{2q_2}-\frac1{2q_1}}
\lesssim1,
\qquad n\geq1,
\]
and therefore \(q_1\leq q_2\).

\smallskip
\noindent
For \(w\in\C\), define
\[
f_w(z)
:=
\e^{z\overline{w}}\e^{(\alpha-t)\abs{z}^2}.
\]
By \eqref{eq:exponential-kernel-norm},
\[
\norm{f_w}_{p_1,q_1,\alpha}
\asymp
\exp\left(
\frac{\abs{w}^2}{2(2t-\alpha)}
\right)
(1+\abs{w})^{d(p_1,q_1)}.
\]
Moreover, Lemma~\ref{lem:fock-test-formulas} gives
\[
P_tf_w(z)
=
\frac{t}{2t-\alpha}
\exp\left(
\frac{t}{2t-\alpha}z\overline{w}
\right),
\]
and hence, by \eqref{eq:exponential-kernel-norm} again,
\[
\norm{P_tf_w}_{p_2,q_2,\beta}
\asymp
\exp\left(
\frac{t^2\abs{w}^2}
     {2\beta(2t-\alpha)^2}
\right)
(1+\abs{w})^{d(p_2,q_2)}.
\]
Since \(t^2=\beta(2t-\alpha)\), the Gaussian factors coincide.
Boundedness therefore gives
\[
(1+\abs{w})^{d(p_2,q_2)}
\lesssim
(1+\abs{w})^{d(p_1,q_1)}.
\]
Letting \(\abs{w}\to\infty\), we obtain
\(
d(p_2,q_2)\leq d(p_1,q_1).
\)
\end{proof}

\subsection{A unified shell estimate for \(P_t^+\)} \label{subsec:unified-shell-estimate}

We next derive a shell estimate that reduces the maximal Fock
projection to the drifted Gaussian sequence operators studied in
Subsection~\ref{subsec:drifted-gaussian-operators}. In the Banach
mixed-norm range, sufficiency was proved in
\cite[Section~4]{LiuShiWei2027} using continuous radial kernel
estimates. Our shell argument is self-contained, applies also to
quasi-Banach targets, and provides the tail estimates needed in the
compactness argument.

\medskip
\noindent
For \(1\leq p_1, p_2\leq\infty\), define the angular convolution exponent
\(\rho(p_1, p_2)\) by
\[
\frac1{\rho(p_1, p_2)}
:=
\begin{cases}
1, & p_2 \leq p_1,\\[2mm]
1+\dfrac{1}{p_2}-\dfrac{1}{p_1}, & p_1<p_2.
\end{cases}
\]
Throughout this subsection, set
\[
p_2^\sharp:=\max\{1,p_2\},
\qquad
\rho_{12}:=\rho(p_1,p_2^\sharp),
\qquad
\sigma:=\frac1{q_1'}-\frac1{\rho_{12}},
\]
and
\[
\tau:=\frac{t}{\beta},
\qquad
\eta:=-\frac{\Delta(t;\alpha,\beta)}{2\beta}.
\]
Thus \(\eta>0\) in the strict Gaussian region, whereas
\(\eta=0\) at the Gaussian boundary.

\medskip
\noindent
For \(k\in\mathbb N_0\), define
\[
X_{p,q,\alpha}(f;k)
:=
\begin{cases}
\displaystyle
\left(
q\alpha\int_k^{k+1}
M_p(f,s)^q\e^{-\frac{q\alpha}{2}s^2}s\,ds
\right)^{\frac{1}{q}},
& 0<q<\infty,\\[4mm]
B_{p,\alpha}(f;k),
& q=\infty.
\end{cases}
\]
Then
\begin{equation}
\label{eq:source-shell-sequence-norm}
\norm{\bigl(X_{p,q,\alpha}(f;k)\bigr)_{k\geq0}}_{\ell^q}
=
\norm{f}_{p,q,\alpha}.
\end{equation}

\begin{lemma}[Unified shell estimate]
\label{lem:unified-shell-estimate}
Assume that
\[
1\leq p_1,q_1\leq\infty,
\qquad
0<p_2\leq\infty,
\qquad
\Delta(t;\alpha,\beta)\leq0.
\]
Then, for every nonnegative measurable function \(g\),
\begin{equation}
\label{eq:unified-shell-estimate}
B_{p_2,\beta}(P_t^+g;\ell)
\lesssim
\sum_{k=0}^\infty
\e^{-\frac{\beta}{4}\abs{\ell-\tau k}^2}
\e^{-\eta k^2}
(1+k)^\sigma
X_{p_1,q_1,\alpha}(g;k),
\qquad \ell\geq0,
\end{equation}
where both sides are allowed to take the value \(+\infty\).
 The implicit constant depends only on
\(t,\alpha,\beta,p_1,q_1\), and \(p_2\).
\end{lemma}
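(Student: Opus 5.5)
Fix $\ell$ and $r\in[\ell,\ell+1)$. Polar coordinates give
\[
P_t^+g(re^{i\varphi})=\frac{t}{\pi}\int_0^\infty \e^{-ts^2}s\int_0^{2\pi}\e^{trs\cos(\varphi-\theta)}g(se^{i\theta})\,d\theta\,ds .
\]
The inner integral is an angular convolution of $g(s\,\cdot)$ with the kernel $\Phi_{rs}(\theta):=\e^{trs\cos\theta}$. We first take the $L^{p_2^\sharp}(\T)$-norm in $\varphi$. Because $p_2^\sharp\ge1$, Minkowski's integral inequality lets the $L^{p_2^\sharp}$ norm pass inside the $s$-integral. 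Young's convolution inequality on $\T$ with $1+\frac1{p_2^\sharp}=\frac1{\rho_{12}}+\frac1{p_1}$ then gives the shell estimate when $p_1<p_2^\sharp$. When $p_2^\sharp\le p_1$ we use $\rho_{12}=1$ together with the trivial embedding $L^{p_1}\subset L^{p_2^\sharp}$ on the probability space $\T$. Finally, $M_{p_2}\le M_{p_2^\sharp}$ handles quasi-Banach targets. Altogether
\[
M_{p_2}(P_t^+g,r)\lesssim\int_0^\infty \e^{-ts^2}s\,\|\Phi_{rs}\|_{L^{\rho_{12}}(\T)}M_{p_1}(g,s)\,ds .
\]
The Laplace-type estimate $\|\e^{\lambda\cos\theta}\|_{L^\rho(\T)}\asymp \e^{\lambda}(1+\lambda)^{-\frac1{2\rho}}$ then yields the kernel bound $\e^{trs}(1+rs)^{-\frac1{2\rho_{12}}}$.

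Next we split the $s$-integral over $[k,k+1)$ and apply Hölder in $s$ with exponents $q_1,q_1'$ against the weight defining $X_{p_1,q_1,\alpha}(g;k)$:
\[
\int_k^{k+1}(\cdots)\,ds\lesssim X_{p_1,q_1,\alpha}(g;k)\Bigl(\int_k^{k+1}\bigl[\e^{-ts^2+trs+\frac\alpha2 s^2}(1+rs)^{-\frac1{2\rho_{12}}}s^{1-\frac1{q_1}}\bigr]^{q_1'}ds\Bigr)^{1/q_1'} .
\]
Then we multiply by $\e^{-\frac\beta2 r^2}$ and complete the square in the exponent $-\frac\beta2r^2+trs-(t-\frac\alpha2)s^2$. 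With $\tau=t/\beta$ one gets
\[
-\tfrac\beta2(r-\tau s)^2+\bigl(\tfrac{t^2}{2\beta}-t+\tfrac\alpha2\bigr)s^2=-\tfrac\beta2(r-\tau s)^2-\eta s^2 ,
\]
which is exactly where $\eta=-\Delta/(2\beta)\ge0$ comes from.

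The main obstacle is the polynomial bookkeeping. The $s^{q_1'}$-integral over a unit interval on which the exponent has a Gaussian of width $\sim1$ in $s$ near $s\approx r/\tau$ contributes only $O(1)$. But the correct power $(1+k)^{1/q_1'-1/\rho_{12}}$ needs the factor $(1+rs)^{-1/(2\rho_{12})}\approx(1+k)^{-1/\rho_{12}}$ when $r\approx\tau k$. It also needs $s^{1-1/q_1}=s^{1/q_1'}$. So we cannot simply bound the $L^{q_1'}$ integral by its supremum; we must account for these powers. We plan to bound $(1+rs)^{-1/(2\rho_{12})}\le (1+s)^{-1/\rho_{12}}(1+|r-\tau s|)^{C}$-type corrections. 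The spare Gaussian $\e^{-\frac\beta4(r-\tau s)^2}$ absorbs all polynomial factors in $|r-\tau s|$, leaving $\e^{-\frac\beta4|\ell-\tau k|^2}$ after adjusting for $r\in[\ell,\ell+1)$, $s\in[k,k+1)$ (constants change). Also $\e^{-\eta s^2}\le \e^{-\eta k^2}$. Small $k$ or small $rs$ are handled separately by crude bounds.

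Summing over $k$ gives the right-hand side of \eqref{eq:unified-shell-estimate} for each $r$, and taking the essential supremum over $r\in[\ell,\ell+1)$ yields $B_{p_2,\beta}(P_t^+g;\ell)$. The case $q_1=\infty$ is identical with $L^1$ in $s$. Since $g\ge0$, all steps are valid with values in $[0,\infty]$.
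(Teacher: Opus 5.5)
Your proposal is correct and follows essentially the same route as the paper. The shared steps are polar coordinates, Minkowski and Young's inequality in $L^{p_2^\sharp}(\T)$ with exponent $\rho_{12}$, the Bessel bound $\e^{a}(1+a)^{-1/(2\rho_{12})}$, the completed square producing $\eta$, the cell inequality $1+k\lesssim(1+rs)^{1/2}(1+\abs{r-\tau s})$, and H\"older on $[k,k+1)$; the only cosmetic difference is that you apply H\"older before bounding the kernel on the cell. The cell inequality holds uniformly, so no separate small-$k$ treatment is needed. One bookkeeping point: passing from $(r-\tau s)^2$ to $\abs{\ell-\tau k}^2$ itself forces a slight decrease of the Gaussian constant. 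To land exactly on $\frac{\beta}{4}$, absorb the polynomial factors with a small fraction of $\e^{-\frac{\beta}{2}(r-\tau s)^2}$ rather than half of it.
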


\begin{proof}
For \(z=re^{i\theta}\) and \(w=se^{i\phi}\), set
\[
E_a(\theta):=\e^{a\cos\theta},
\qquad
g_s(\phi):=g(se^{i\phi}).
\]
Since convolution on \(\T\) is taken with respect to normalized
Lebesgue measure, Tonelli's theorem gives
\[
P_t^+g(re^{i\theta})
=
2t\int_0^\infty
(E_{trs}*g_s)(\theta)
\e^{-ts^2}s\,ds.
\]
The definition of \(\rho_{12}\) ensures that
\begin{equation}
\label{eq:angular-young-estimate}
\norm{E_a*h}_{L^{p_2^\sharp}(\T)}
\leq
\norm{E_a}_{L^{\rho_{12}}(\T)}
\norm{h}_{L^{p_1}(\T)}.
\end{equation}
Indeed, if \(p_1<p_2^\sharp\), then
\[
1+\frac1{p_2^\sharp}
=
\frac1{p_1}+\frac1{\rho_{12}},
\]
and \eqref{eq:angular-young-estimate} follows from Young's convolution
inequality. If \(p_2^\sharp\leq p_1\), then \(\rho_{12}=1\), and the
same conclusion follows from Young's inequality in \(L^{p_1}(\T)\)
and the embedding
\[
L^{p_1}(\T)\hookrightarrow L^{p_2^\sharp}(\T).
\]
We refer to \cite[Theorem~1.2.12]{Grafakos2014} for Young's
convolution inequality.

\smallskip
\noindent
Since \(p_2^\sharp\geq1\), by Minkowski's integral inequality
\cite[Theorem~6.19(a)]{Folland1999}, with the usual supremum
argument when \(p_2^\sharp=\infty\), and the preceding convolution
estimate \eqref{eq:angular-young-estimate}, we obtain
\[
\begin{aligned}
M_{p_2^\sharp}(P_t^+g,r)
&\leq
2t\int_0^\infty
\norm{E_{trs}*g_s}_{L^{p_2^\sharp}(\T)}
\e^{-ts^2}s\,ds\\
&\leq
2t\int_0^\infty
\norm{E_{trs}}_{L^{\rho_{12}}(\T)}
M_{p_1}(g,s)\e^{-ts^2}s\,ds.
\end{aligned}
\]

\medskip
\noindent
Moreover, for \(1\leq\rho<\infty\), the integral representation of the modified
Bessel function \(I_0\) gives
\[
\norm{E_a}_{L^\rho(\T)}^\rho
=
\frac1{2\pi}\int_{-\pi}^{\pi}
\e^{\rho a\cos\theta}\,d\theta
=
I_0(\rho a).
\]
Using the standard estimate for \(I_0\), see, for example, \cite[formulas~9.6.19 and~9.7.1]{AbramowitzStegun1964},
we obtain
\[
\norm{E_a}_{L^\rho(\T)}
\lesssim
\e^a(1+a)^{-\frac1{2\rho}}.
\]
For \(\rho=\infty\), the same estimate is immediate from
\(
\norm{E_a}_{L^\infty(\T)}=\e^a.
\) 
Since \(\rho_{12}\) is fixed,
\[
\norm{E_a}_{L^{\rho_{12}}(\T)}
\lesssim
\e^a(1+a)^{-\frac1{2\rho_{12}}}.
\]
Consequently,
\[
M_{p_2^\sharp}(P_t^+g,r)
\lesssim
\int_0^\infty
\e^{trs}(1+rs)^{-\frac1{2\rho_{12}}}
M_{p_1}(g,s)\e^{-ts^2}s\,ds.
\]
Since \(p_2\leq p_2^\sharp\) and \(\T\) has normalized measure,
\[
M_{p_2}(P_t^+g,r)
\leq
M_{p_2^\sharp}(P_t^+g,r).
\]
Hence, 
\[
M_{p_2}(P_t^+g,r)
\lesssim
\int_0^\infty
\e^{trs}(1+rs)^{-\frac1{2\rho_{12}}}
M_{p_1}(g,s)\e^{-ts^2}s\,ds.
\]
\medskip
\noindent
The Gaussian identity
\[
trs-ts^2-\frac{\beta r^2}{2}
=
-\frac{\beta}{2}(r-\tau s)^2
-\frac{\alpha s^2}{2}
-\eta s^2
\]
therefore yields
\[
M_{p_2}(P_t^+g,r)\e^{-\frac{\beta}{2}r^2} \lesssim
\int_0^\infty
\e^{-\frac{\beta}{2}(r-\tau s)^2}
\e^{-\eta s^2}
(1+rs)^{-\frac1{2\rho_{12}}}
M_{p_1}(g,s)\e^{-\frac{\alpha}{2}s^2}s\,ds.
\]
Let \(r\in[\ell,\ell+1)\) and \(s\in[k,k+1)\). The elementary
relations
\[
\abs{r-\tau s}
\geq
\abs{\ell-\tau k}- (1 + \tau)
\qquad
\text{and} \qquad
1+k
\lesssim
(1+rs)^{\frac{1}{2}}\bigl(1+\abs{r-\tau s}\bigr)
\]
imply, after slightly decreasing the Gaussian constant, that
\[
\e^{-\frac{\beta}{2}(r-\tau s)^2}
\e^{-\eta s^2} (1+rs)^{-\frac1{2\rho_{12}}}
\lesssim
\e^{-\frac{\beta}{4}\abs{\ell-\tau k}^2}
\e^{-\eta k^2}(1+k)^{-\frac1{\rho_{12}}}.
\]
Finally, Hölder's inequality on \([k,k+1)\), with the usual interpretation when
\(q_1=\infty\), gives
\[
\int_k^{k+1}M_{p_1}(g,s)\e^{-\frac{\alpha}{2}s^2}s\,ds
\lesssim
(1+k)^{\frac1{q_1'}}
X_{p_1,q_1,\alpha}(g;k).
\]
Combining the preceding estimates and taking the supremum over
\(r\in[\ell,\ell+1)\) proves
\eqref{eq:unified-shell-estimate}.
\end{proof}

\subsection{Sufficient conditions for \(P_t^+\)}
\label{subsec:boundedness-sufficiency}

We now combine Lemma~\ref{lem:unified-shell-estimate} with
Proposition~\ref{prop:drifted-gaussian-operator} to prove the converse
implication.

\begin{proposition}
\label{prop:boundedness-sufficient}
Let
\(
1\leq p_1,q_1\leq\infty\) and \(
0<p_2,q_2\leq\infty\).
Suppose that either
\[
\Delta(t;\alpha,\beta)<0,
\]
or
\[
\Delta(t;\alpha,\beta)=0,
\qquad
q_1\leq q_2,
\qquad
d(p_2,q_2)\leq d(p_1,q_1).
\]
Then
\(
P_t^+:
\Lpqa{\alpha}{p_1}{q_1}
\longrightarrow
\Lpqa{\beta}{p_2}{q_2}
\)
is bounded.
\end{proposition}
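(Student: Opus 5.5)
The plan is to reduce the continuous estimate to Proposition~\ref{prop:drifted-gaussian-operator} through Lemma~\ref{lem:unified-shell-estimate} and Proposition~\ref{prop:annular-discretization}. Since \(\abs{P_t^+g}\le P_t^+\abs{g}\) pointwise and \(\abs{g}\) has the same mixed quasi-norm as \(g\), it suffices to treat nonnegative \(g\) (bounded and compactly supported, as in our initial interpretation of the kernel formulas). Set
\[
x_k:=X_{p_1,q_1,\alpha}(g;k),\qquad y_\ell:=B_{p_2,\beta}(P_t^+g;\ell).
\]
By \eqref{eq:source-shell-sequence-norm}, \(\norm{x}_{\ell^{q_1}}=\norm{g}_{p_1,q_1,\alpha}\).

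On the target side, for \(q_2<\infty\), Proposition~\ref{prop:annular-discretization} gives \(\norm{P_t^+g}_{p_2,q_2,\beta}\lesssim\norm{y}_{\ell_{\mathrm{sh}}^{q_2}}\). For \(q_2=\infty\) we have equality with \(\sup_\ell y_\ell=\norm{y}_{\ell_{\mathrm{sh}}^\infty}\). Lemma~\ref{lem:unified-shell-estimate} applies because \(p_1,q_1\ge1\) and \(\Delta\le0\). It yields \(y_\ell\lesssim(\mathcal Tx)_\ell\) coordinatewise, where \(\mathcal T\) is the drifted Gaussian operator with parameters
\[
\gamma_1=\frac{\beta}{4},\qquad \gamma_0=\eta,\qquad \tau=\frac{t}{\beta},\qquad \sigma=\frac1{q_1'}-\frac1{\rho_{12}}.
\]
Since \(\ell^{q}_{\mathrm{sh}}\) quasi-norms are monotone in the absolute values of the coordinates, we get \(\norm{P_t^+g}_{p_2,q_2,\beta}\lesssim\norm{\mathcal T}_{\ell^{q_1}\to\ell^{q_2}_{\mathrm{sh}}}\norm{g}_{p_1,q_1,\alpha}\). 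The proof therefore reduces to showing that \(\mathcal T:\ell^{q_1}\to\ell^{q_2}_{\mathrm{sh}}\) is bounded.

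In the strict Gaussian region we have \(\eta>0\), and Proposition~\ref{prop:drifted-gaussian-operator}(a) gives boundedness (indeed compactness) for every real \(\sigma\) and all \(q_1,q_2\). This is where the quasi-Banach targets come for free: the shell estimate already produced a \(p_2\)-independent bound through \(p_2^\sharp\).

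At the Gaussian boundary, \(\eta=0\). The hypothesis \(q_1\le q_2\) puts us in Proposition~\ref{prop:drifted-gaussian-operator}(b), so boundedness is equivalent to
\[
1-\frac1{q_1}-\frac1{\rho_{12}}+\frac1{q_2}\le0.
\]
This splits into two cases. If \(p_2^\sharp\le p_1\), then \(1/\rho_{12}=1\) and the condition is just \(q_1\le q_2\). This case includes every \(p_2<1\), since then \(p_2^\sharp=1\le p_1\). If \(p_1<p_2^\sharp\), then necessarily \(p_2\ge1\) and \(p_2^\sharp=p_2\), so \(1/\rho_{12}=1+1/p_2-1/p_1\). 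The condition then becomes \(1/q_2-1/p_2\le 1/q_1-1/p_1\), which is exactly \(d(p_2,q_2)\le d(p_1,q_1)\). The conventions \(1/\infty=0\) handle the endpoint exponents uniformly.

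The main point requiring care is this exponent bookkeeping at the boundary. In particular, one must check that replacing \(p_2\) by \(p_2^\sharp\) loses nothing. When \(p_2<1\), we must confirm that the required condition \(q_1\le q_2\) is implied by the hypotheses; it is simply assumed. We must also confirm that \(d(p_2,q_2)\le d(p_1,q_1)\) is not needed in that case. This is consistent, because the angular part then only uses \(L^{p_1}\hookrightarrow L^1\) on \(\T\).
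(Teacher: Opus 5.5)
Your proposal is correct and is essentially the paper's proof: the unified shell estimate and annular discretization reduce $P_t^+$ to the drifted Gaussian operator with $\gamma_1=\frac{\beta}{4}$ and $\gamma_0=\eta$. Part (a) of Proposition~\ref{prop:drifted-gaussian-operator} then handles $\Delta(t;\alpha,\beta)<0$, and part (b), together with the same two-case computation of $\sigma+\frac1{q_2}$, handles $\Delta(t;\alpha,\beta)=0$.

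The paper adds one preliminary step. Using $t-\frac{\alpha}{2}>0$ and a radial H\"older argument, it checks that the defining integral converges absolutely and locally uniformly for every $f\in L_\alpha^{p_1,q_1}$. You should also drop your restriction to compactly supported $g$. It is unnecessary, since Lemma~\ref{lem:unified-shell-estimate} holds for all nonnegative measurable $g$. It also costs something, because such functions are not dense when $p_1$ or $q_1$ equals $\infty$.
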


\begin{proof}
Retain the notation from Subsection~\ref{subsec:unified-shell-estimate}.
We first prove absolute convergence of the defining integral. Since
\(\Delta(t;\alpha,\beta)\leq0\),
\(
\beta(2t-\alpha)\geq t^2>0,
\)
and hence
\[
\mu:=t-\frac{\alpha}{2}>0.
\]
Fix \(R>0\). For \(\abs z\leq R\), since \(p_1\geq1\),
\[
\begin{aligned}
\int_{\C}
\abs{f(w)}
\e^{t\operatorname{Re}(z\bar w)}
\e^{-t\abs{w}^2}\,dA(w) 
\lesssim
\int_0^\infty
M_{p_1}(f,s)
\e^{-\frac{\alpha}{2}s^2}
\e^{tRs-\mu s^2}s\,ds.
\end{aligned}
\]
Since \(\mu >0\), the function
\(
s\longmapsto\e^{tRs- \mu s^2}
\)
belongs to the space
\(
L^{q_1'}\bigl((0,\infty),s\,ds\bigr),
\)
with the usual endpoint interpretations when \(q_1=1\) or
\(q_1=\infty\). Hence Hölder's inequality gives
\[
\int_{\C}
\abs{f(w)}
\e^{t\operatorname{Re}(z\bar w)}
\e^{-t\abs{w}^2}\,dA(w)
\lesssim
\norm{f}_{p_1,q_1,\alpha},
\]
where the implicit constant depends on \(\alpha, t, q_1\), and \(R\). Thus the defining integrals for both \(P_t^+f\) and \(P_tf\)
converge absolutely and locally uniformly in \(z\). In particular,
\(P_t^+f\) is continuous and \(P_tf\) is entire.

\medskip
\noindent
We next prove the mixed-norm estimates. Set
\[
g:=\abs f,
\qquad
Y_k:=X_{p_1,q_1,\alpha}(g;k),
\qquad k\geq0.
\]
By \eqref{eq:source-shell-sequence-norm},
\[
\norm{Y}_{\ell^{q_1}}
=
\norm{f}_{p_1,q_1,\alpha}.
\]
Lemma~\ref{lem:unified-shell-estimate} and
Proposition~\ref{prop:annular-discretization}, with the defining
supremum used when \(q_2=\infty\), give
\begin{equation}
\label{eq:positive-projection-sequence-bound}
\norm{P_t^+g}_{p_2,q_2,\beta}
\lesssim
\norm{\mathcal TY}_{\ell_{\mathrm{sh}}^{q_2}},
\end{equation}
where
\[
(\mathcal TY)_\ell
:=
\sum_{k=0}^\infty
\e^{-\frac{\beta}{4}\abs{\ell-\tau k}^2}
\e^{-\eta k^2}
(1+k)^\sigma Y_k.
\]

\smallskip
\noindent
\emph{The strict Gaussian region.}
If \(\Delta(t;\alpha,\beta)<0\), then \(\eta>0\).
Proposition~\ref{prop:drifted-gaussian-operator}\textup{(a)} gives
\(
\norm{\mathcal TY}_{\ell_{\mathrm{sh}}^{q_2}}
\lesssim
\norm{Y}_{\ell^{q_1}}.
\)
Together with \eqref{eq:positive-projection-sequence-bound}, this
yields
\[
\norm{P_t^+g}_{p_2,q_2,\beta}
\lesssim
\norm{f}_{p_1,q_1,\alpha}.
\]

\smallskip
\noindent
\emph{The Gaussian boundary.}
Suppose that \(\Delta(t;\alpha,\beta)=0\). Then \(\eta=0\).
If \(p_2^\sharp\leq p_1\), then \(\rho_{12}=1\), and
\[
\sigma+\frac1{q_2}
=
\frac1{q_2}-\frac1{q_1}
\leq0.
\]
If \(p_1<p_2^\sharp\), then necessarily
\(p_2^\sharp=p_2\), and
\[
\sigma+\frac1{q_2}
=
\frac1{q_1'}
-\left(
1+\frac1{p_2}-\frac1{p_1}
\right)
+\frac1{q_2}=
d(p_2,q_2)-d(p_1,q_1)
\leq0.
\]
Since \(q_1\leq q_2\),
Proposition~\ref{prop:drifted-gaussian-operator}\textup{(b)} gives
\(
\norm{\mathcal TY}_{\ell_{\mathrm{sh}}^{q_2}}
\lesssim
\norm{Y}_{\ell^{q_1}}.
\)
Hence
\[
\norm{P_t^+g}_{p_2,q_2,\beta}
\lesssim
\norm{f}_{p_1,q_1,\alpha}.
\]
Finally, since 
\(
\abs{P_t^+f}
\leq
P_t^+\abs f\), we obtain
\[
\norm{P_t^+f}_{p_2,q_2,\beta}
\lesssim
\norm{f}_{p_1,q_1,\alpha},
\]
which completes the proof.
\end{proof}

\medskip
\noindent
Theorem~\ref{thm:main-boundedness} immediately yields the following
special cases.

\begin{corollary}
\label{cor:equal-gaussian-parameters}
Let \(t, \alpha>0\) and
\(0<p_1,q_1,p_2,q_2\leq\infty\). Then
\(
P_t,P_t^+:
\Lpqa{\alpha}{p_1}{q_1}
\longrightarrow
\Lpqa{\alpha}{p_2}{q_2}
\)
are bounded if and only if
\[
t=\alpha,
\qquad
p_1,q_1\geq1,
\qquad
q_1\leq q_2,
\qquad \text{and} \qquad
d(p_2,q_2)\leq d(p_1,q_1).
\]
\end{corollary}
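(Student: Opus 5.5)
This follows by specializing Theorem~\ref{thm:main-boundedness} to \(\beta=\alpha\). The only work is to find out when \(\Delta(t;\alpha,\alpha)\) is nonpositive.

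First compute
\[
\Delta(t;\alpha,\alpha)=t^2-2\alpha t+\alpha^2=(t-\alpha)^2 .
\]
This quantity is never negative. So the strict Gaussian alternative \(\Delta<0\) in Theorem~\ref{thm:main-boundedness}\textup{(iii)} cannot occur. The boundary alternative \(\Delta=0\) holds exactly when \(t=\alpha\).

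Next, apply the equivalence of \textup{(i)}, \textup{(ii)} and \textup{(iii)} in Theorem~\ref{thm:main-boundedness}. It says that \(P_t\) or \(P_t^+\) is bounded from \(\Lpqa{\alpha}{p_1}{q_1}\) to \(\Lpqa{\alpha}{p_2}{q_2}\) if and only if \(p_1,q_1\geq1\), \(\Delta=0\), \(q_1\leq q_2\), and \(d(p_2,q_2)\leq d(p_1,q_1)\). Substituting \(\Delta=0\iff t=\alpha\) gives exactly the stated list of conditions.

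Equivalently, one can read off the two directions from the components of the proof. Necessity comes from Proposition~\ref{prop:boundedness-necessary}: it gives \((t-\alpha)^2\leq0\), hence \(t=\alpha\), and then the boundary inequalities. Sufficiency comes from Proposition~\ref{prop:boundedness-sufficient} in its Gaussian-boundary case, together with the pointwise domination \eqref{eq:pointwise-domination}, which passes boundedness from \(P_t^+\) to \(P_t\).

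There is no real obstacle here. The only point to check is the perfect-square identity, which shows that equal Gaussian weights always force the boundary regime.
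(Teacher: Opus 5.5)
Your proposal is correct and matches the paper's approach: the corollary is the specialization \(\beta=\alpha\) of Theorem~\ref{thm:main-boundedness}. The identity \(\Delta(t;\alpha,\alpha)=(t-\alpha)^2\) excludes the strict Gaussian region and makes the boundary alternative equivalent to \(t=\alpha\), which is exactly the computation behind the paper's ``immediately yields.''
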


\begin{corollary}
\label{cor:equal-mixed-exponents}
Let \(t, \alpha,\beta>0\) and \(0<p,q\leq\infty\). Then
\(
P_t,P_t^+:
\Lpqa{\alpha}{p}{q}
\longrightarrow
\Lpqa{\beta}{p}{q}
\)
are bounded if and only if
\[
p,q\geq1
\qquad\text{and}\qquad
\Delta(t;\alpha,\beta)\leq0.
\]
\end{corollary}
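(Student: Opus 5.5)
This is a direct specialization of Theorem~\ref{thm:main-boundedness}, which is already established by combining Proposition~\ref{prop:boundedness-necessary}, Proposition~\ref{prop:boundedness-sufficient} and the pointwise domination \eqref{eq:pointwise-domination}. The plan is to substitute \(p_1=p_2=p\), \(q_1=q_2=q\) into condition~(iii) of that theorem and simplify.

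First, the boundary conditions become trivial. With equal exponents we have \(q_1=q\leq q=q_2\), and
\[
d(p_2,q_2)=d(p,q)=d(p_1,q_1),
\]
so both endpoint inequalities hold with equality. Condition~(iii) therefore reads: \(p,q\geq1\), and either \(\Delta(t;\alpha,\beta)<0\) or \(\Delta(t;\alpha,\beta)=0\). This is exactly \(p,q\geq1\) together with \(\Delta(t;\alpha,\beta)\leq0\).

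By the equivalence (i)\(\Leftrightarrow\)(ii)\(\Leftrightarrow\)(iii) in Theorem~\ref{thm:main-boundedness}, this characterizes boundedness of both \(P_t\) and \(P_t^+\) simultaneously.

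There is no genuine obstacle. The only point to check is the conventions when \(p\) or \(q\) equals \(\infty\). The identity \(d(p,q)=d(p,q)\) holds trivially under the convention \(\frac1\infty=0\), so nothing further is needed.
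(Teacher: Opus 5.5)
Your proposal is correct and matches the paper, which obtains this corollary as an immediate specialization of Theorem~\ref{thm:main-boundedness}. With \(p_1=p_2=p\) and \(q_1=q_2=q\), both boundary inequalities hold with equality, so condition~(iii) reduces to \(p,q\geq1\) and \(\Delta(t;\alpha,\beta)\leq0\) for both \(P_t\) and \(P_t^+\).
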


%================================================
\section{Compactness of Fock projections}
\label{sec:compactness}
%================================================
We now prove the compactness classification in
Theorem~\ref{thm:main-compactness}. The argument has three parts.
First, the normalized Gaussian-weighted monomial and exponential
families from the boundedness proof show that compactness of $P_t$
forces the two boundary inequalities to be strict
(Proposition~\ref{prop:compactness-necessary}). Second, we prove the
structural implication that compactness of $P_t^+$ implies compactness
of $P_t$
(Proposition~\ref{prop:maximal-implies-fock-compact}). Finally, under
the strict parameter conditions, we approximate $P_t^+$ by compact
source truncations, with the source tails controlled by
Proposition~\ref{prop:drifted-gaussian-operator}
(Proposition~\ref{prop:compactness-sufficient}).

\medskip
\noindent
As shown in the proof of
Proposition~\ref{prop:boundedness-sufficient}, under the equivalent
conditions in Theorem~\ref{thm:main-boundedness}, the integrals
defining \(P_tf\) and \(P_t^+f\) converge absolutely and locally
uniformly in \(z\in\mathbb C\) for every
\(f\in\Lpqa{\alpha}{p_1}{q_1}\). Consequently, \(P_tf\) is entire and
\(P_t^+f\) is continuous. Thus
\[
P_t:
\Lpqa{\alpha}{p_1}{q_1}
\longrightarrow
\Fpqa{\beta}{p_2}{q_2},
\qquad
P_t^+:
\Lpqa{\alpha}{p_1}{q_1}
\longrightarrow
C(\mathbb C)\cap\Lpqa{\beta}{p_2}{q_2}
\]
are bounded.
Since \(\Fpqa{\beta}{p_2}{q_2}\) is a closed subspace of
\(\Lpqa{\beta}{p_2}{q_2}\), compactness of \(P_t\) as an operator into
\(\Lpqa{\beta}{p_2}{q_2}\) is equivalent to compactness as an operator
into \(\Fpqa{\beta}{p_2}{q_2}\).

\subsection{Necessary conditions for \(P_t\)}
\label{subsec:compactness-necessity}

Compactness implies boundedness. Hence the conditions in
Proposition~\ref{prop:boundedness-necessary} are available throughout
this subsection. It remains to show that both boundary inequalities
must be strict.

\begin{proposition}
\label{prop:compactness-necessary}
Suppose that
\(
P_t:
\Lpqa{\alpha}{p_1}{q_1}
\longrightarrow
\Fpqa{\beta}{p_2}{q_2}
\)
is compact. Then
\(
p_1,q_1\geq1
\)
and either
\[
\Delta(t;\alpha,\beta)<0,
\]
or
\[
\Delta(t;\alpha,\beta)=0,
\qquad
q_1<q_2,
\qquad
d(p_2,q_2)<d(p_1,q_1).
\]
\end{proposition}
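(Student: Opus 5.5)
Compactness implies boundedness, so Proposition~\ref{prop:boundedness-necessary} already gives \(p_1,q_1\geq1\) and \(\Delta(t;\alpha,\beta)\leq0\). It therefore suffices to assume \(\Delta(t;\alpha,\beta)=0\) and rule out the two equality cases \(q_1=q_2\) and \(d(p_2,q_2)=d(p_1,q_1)\). In each case the plan is to produce a bounded family in \(\Lpqa{\alpha}{p_1}{q_1}\) whose images converge locally uniformly to zero but keep target quasi-norms bounded below. This contradicts Corollary~\ref{cor:ambient-operator-compactness}\textup{(b)}, which applies because \(P_t\) maps into \(\Fpqa{\beta}{p_2}{q_2}\subset C(\C)\cap\Lpqa{\beta}{p_2}{q_2}\).

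\emph{Case \(q_1=q_2\).} Take \(\gamma=\alpha-t\), so that \(\alpha-2\gamma=t-\gamma=2t-\alpha>0\). Normalize the monomial tests:
\[
h_n:=\frac{f_{\gamma,n}}{\norm{f_{\gamma,n}}_{p_1,q_1,\alpha}},\qquad f_{\gamma,n}(w)=w^n\e^{\gamma\abs{w}^2}.
\]
From the computation leading to \eqref{eq:monomial-base-obstruction}, with \(\Delta=0\) the exponential factor equals one and the polynomial factor is \((1+n)^{\frac1{2q_2}-\frac1{2q_1}}=1\). Hence \(\norm{P_th_n}_{p_2,q_2,\beta}\asymp1\). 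For local uniform convergence, note that
\[
P_th_n(z)=c_n z^n,\qquad \abs{c_n}\asymp \left(\frac{t}{t-\gamma}\right)^{n}\left(\frac{(2t-\alpha)^n}{n!}\right)^{1/2}(1+n)^{\frac14-\frac1{2q_1}}.
\]
Thus \(\abs{c_n}R^n\lesssim C^nR^n/\sqrt{n!}\,\mathrm{poly}(n)\to0\) for each fixed \(R\). (Alternatively, \(c_nz^n\) has target norm \(\asymp1\), and normalized monomials tend to zero locally uniformly because \(\norm{z^n}_{p,q,\beta}\) grows like \(\sqrt{n!}\).)

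\emph{Case \(d(p_2,q_2)=d(p_1,q_1)\).} Use the normalized exponential packets \(g_w:=f_w/\norm{f_w}_{p_1,q_1,\alpha}\) with \(\abs{w}\to\infty\), where \(f_w(z)=\e^{z\overline w}\e^{(\alpha-t)\abs z^2}\). By Step~3 of Proposition~\ref{prop:boundedness-necessary}, the Gaussian factors match and the polynomial factors are equal, so \(\norm{P_tg_w}_{p_2,q_2,\beta}\asymp1\). The images are
\[
P_tg_w(z)\asymp \exp\Bigl(\frac{t}{2t-\alpha}z\overline w\Bigr)\exp\Bigl(-\frac{\abs w^2}{2(2t-\alpha)}\Bigr)(1+\abs w)^{-d(p_1,q_1)}.
\]
For \(\abs z\leq R\), the modulus is at most \(\exp\bigl(CR\abs w-c\abs w^2\bigr)\mathrm{poly}(\abs w)\to0\). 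So this is also an escaping family with non-vanishing images, again contradicting Corollary~\ref{cor:ambient-operator-compactness}\textup{(b)}.

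The main point needing care is that the \(\asymp\) estimates \eqref{eq:monomial-norm} and \eqref{eq:exponential-kernel-norm} hold uniformly for the parameters used, including the endpoint values \(q=\infty\) and quasi-Banach \(p_2,q_2\). Also, the target quasi-norm lower bound must be genuinely two-sided. This follows directly from those equivalences, since the images are exactly scalar multiples of monomials and of kernels \(\e^{\lambda z\overline\zeta}\). The local uniform decay estimates are routine.
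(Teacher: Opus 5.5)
Your proposal is correct and follows essentially the same route as the paper. You use the same normalized Gaussian-weighted monomials with \(\gamma=\alpha-t\) and the same normalized exponential packets \(g_w\), and you combine them with Corollary~\ref{cor:ambient-operator-compactness}\textup{(b)}. The only cosmetic difference is that you take the non-strict inequalities from Proposition~\ref{prop:boundedness-necessary} and rule out the equality cases by contradiction, whereas the paper reads off the strict inequalities directly from \(\norm{P_tg_n}_{p_2,q_2,\beta}\to0\) together with the asymptotics \eqref{eq:norm-ptg} and \eqref{eq:norm-ptg-packets}.
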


\begin{proof}
By Proposition~\ref{prop:boundedness-necessary},
\(
p_1,q_1\geq1\) and \(
\Delta(t;\alpha,\beta)\leq0\).
Suppose that
\(
\Delta(t;\alpha,\beta)=0.
\)
Then
\[
2t-\alpha=\frac{t^2}{\beta}>0.
\]
We use the same Gaussian-weighted monomial and exponential
families as in the proof of Proposition~\ref{prop:boundedness-necessary}. The corresponding calculations will therefore be recalled only briefly.

\medskip
\noindent
\emph{Step 1. Strict ordering of the outer exponents.} We show that \(q_1 < q_2\). For \(n\in\mathbb N\), set
\[
f_n(w):=w^n\e^{(\alpha-t)\abs{w}^2},
\qquad
g_n:=\frac{f_n}{\norm{f_n}_{p_1,q_1,\alpha}}.
\]
The monomial calculation from the proof of
Proposition~\ref{prop:boundedness-necessary} gives
\[
\norm{f_n}_{p_1,q_1,\alpha}
\asymp
\left(\frac{n!}{(2t - \alpha)^n}\right)^{\frac{1}{2}}
(1+n)^{\frac1{2q_1}-\frac14},
\qquad 
P_tf_n(z)
=
\left(\frac{t}{2t - \alpha}\right)^{n+1}z^n
\]
and
\[
\norm{P_tf_{n}}_{p_2,q_2,\beta}
\asymp
\left(\frac{t}{2t-\alpha}\right)^{n+1}
\left(\frac{n!}{\beta^n}\right)^{\frac{1}{2}}
(1+n)^{\frac1{2q_2}-\frac14}.
\]
Since \(t^2=\beta (2t - \alpha)\), it follows that
\begin{equation}\label{eq:norm-ptg}
\norm{P_t g_n}_{p_2,q_2,\beta}
\asymp
(1+n)^{\frac1{2q_2}-\frac1{2q_1}}.
\end{equation}
On the other hand, for every \(R>0\), Stirling's formula gives
\[
\sup_{\abs z\leq R}\abs{P_tg_n(z)}
\lesssim 
\frac{C_R^n (1+n)^{\frac{1}{4} - \frac{1}{2q_1}}}{\sqrt{n!}}
\longrightarrow0, \qquad \text{where} \qquad C_R := \frac{tR}{(2t - \alpha)^{\frac{1}{2}}}.
\]
Thus \(P_tg_n\to0\) locally uniformly on \(\C\). Since
\(\norm{g_n}_{p_1,q_1,\alpha}=1\), compactness of \(P_t\) and
Corollary~\ref{cor:ambient-operator-compactness}\textup{(b)} imply
\[
\norm{P_tg_n}_{p_2,q_2,\beta}\longrightarrow0.
\]
Together with \eqref{eq:norm-ptg}, this gives \(q_1<q_2\).

\medskip
\noindent
\emph{Step 2. Strict mixed-norm defect.} We show that \(d(p_2, q_2) < d(p_1, q_1)\).
For \(w \in\C\), define
\[
f_w(z)
:=
\e^{z\overline{w}}\e^{(\alpha-t)\abs{z}^2},
\qquad
g_w
:=
\frac{f_w}{\norm{f_w}_{p_1,q_1,\alpha}}.
\]
Repeating the calculation for the Gaussian-weighted exponential
packets from the proof of Proposition~\ref{prop:boundedness-necessary}, we obtain
\[
\norm{f_w}_{p_1,q_1,\alpha}
\asymp
\exp\left(\frac{\abs{w}^2}{2(2t - \alpha)}\right)
(1+\abs{w})^{d(p_1,q_1)},
\qquad
P_tf_w(z)
=
\frac{t}{2t - \alpha}
\exp\left(\frac{t}{2t - \alpha}z \overline{w}\right)
\]
and
\[
\norm{P_tf_w}_{p_2,q_2,\beta}
\asymp
\exp\left(
\frac{t^2\abs{w}^2}
     {2\beta(2t-\alpha)^2}
\right)
(1+\abs{w})^{d(p_2,q_2)}.
\]
Since \(t^2=\beta(2t-\alpha)\), the Gaussian factors coincide. Hence,
\begin{equation}\label{eq:norm-ptg-packets}
\norm{P_t g_w}_{p_2,q_2,\beta}
\asymp
(1+\abs{w})^{d(p_2,q_2)-d(p_1,q_1)}.
\end{equation}
On the other hand, for every \(R>0\),
\[
\sup_{\abs z\leq R}\abs{P_t g_w(z)}
\lesssim
\exp\left(
-\frac{\abs{w}^2}{2(2t - \alpha)}
+\frac{tR\abs{w}}{2t - \alpha}
\right)
(1+\abs{w})^{-d(p_1,q_1)} \longrightarrow0
\]
as \(\abs{w}\to\infty\). Applying compactness and
Corollary~\ref{cor:ambient-operator-compactness}\textup{(b)} along
any sequence \(\abs{w_n}\to\infty\), we obtain
\[
\norm{P_tg_{w_n}}_{p_2,q_2,\beta}\longrightarrow0.
\]
It follows from \eqref{eq:norm-ptg-packets} that
\[
d(p_2,q_2)<d(p_1,q_1).
\]
\end{proof}

\subsection{Source truncations and compactness transfer}
\label{subsec:compactness-transfer}
%------------------------------------------------
We first isolate the contribution of a bounded source region. The
corresponding source truncations are compact, and they will be used to
transfer compactness from $P_t^+$ to $P_t$ by showing that the source
tails vanish in operator quasi-norm.

\medskip
\noindent
For $N\in\mathbb N$, define
\[
Q_Nf:=f\chi_{\{|w|\geq N\}},
\]
and the source truncations
\[
P_{t,N}f(z)
:=
\frac{t}{\pi}
\int_{\abs w<N}
f(w)\e^{tz\overline w}\e^{-t\abs w^2}\,dA(w),
\]
and
\[
P_{t,N}^+f(z)
:=
\frac{t}{\pi}
\int_{\abs w<N}
f(w)\e^{t\operatorname{Re}(z\overline w)}
\e^{-t\abs w^2}\,dA(w).
\]
Then
\[
P_t=P_{t,N}+P_tQ_N,
\qquad
P_t^+=P_{t,N}^++P_t^+Q_N.
\]

\begin{lemma}
\label{lem:compact-source-truncations}
Let
\(
1\leq p_1,q_1\leq\infty\),
\(
0<p_2,q_2\leq\infty\).
Then, for every $N\in\mathbb N$, the operators
\(
P_{t,N}:
\Lpqa{\alpha}{p_1}{q_1}
\longrightarrow
\Fpqa{\beta}{p_2}{q_2}
\)
and
\(
P_{t,N}^+:
\Lpqa{\alpha}{p_1}{q_1}
\longrightarrow
\Lpqa{\beta}{p_2}{q_2}
\)
are compact.
\end{lemma}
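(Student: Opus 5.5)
The plan is to show that \(P_{t,N}^+\) and \(P_{t,N}\) map the unit ball of \(\Lpqa{\alpha}{p_1}{q_1}\) into a set that is relatively compact for locally uniform convergence and has uniformly small tails. Then Corollary~\ref{cor:ambient-operator-compactness}\textup{(a)} applies; note that it gives sufficiency for every \(0<q_2\leq\infty\).

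\emph{Step 1: a pointwise kernel bound.} For \(\abs w<N\) we have \(\abs{\e^{tz\overline w}}\le \e^{tN\abs z}\). Hölder's inequality, used exactly as in the absolute-convergence part of the proof of Proposition~\ref{prop:boundedness-sufficient} but now over the bounded set \(\{\abs w<N\}\), with \(p_1,q_1\ge1\), gives
\[
\int_{\abs w<N}\abs{f(w)}\e^{-t\abs w^2}\,dA(w)\lesssim_N \norm{f}_{p_1,q_1,\alpha}.
\]
Hence
\[
\abs{P_{t,N}f(z)}\le P_{t,N}^+\abs f(z)\le C_N\,\e^{tN\abs z}\norm{f}_{p_1,q_1,\alpha},\qquad z\in\C.
\]
This bound requires no condition on \(\Delta\).

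\emph{Step 2: tails and boundedness.} Since \(M_{p_2}(F,r)\le C_N\e^{tNr}\) for every image \(F\) of the unit ball, the target tail satisfies
\[
\norm{F}_{p_2,q_2,\beta;[R,\infty)}\lesssim_N \Bigl(\int_R^\infty \e^{q_2 tNr-\frac{q_2\beta}{2}r^2}r\,dr\Bigr)^{1/q_2},
\]
with the supremum analogue when \(q_2=\infty\). This tends to \(0\) as \(R\to\infty\), uniformly over the unit ball, because the Gaussian factor beats the linear exponential for any \(\beta>0\). The same computation with \(R=0\) shows that both operators are bounded into \(\Lpqa{\beta}{p_2}{q_2}\). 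Moreover, \(P_{t,N}f\) is entire by differentiation under the integral, which is justified by Step 1 with locally uniform bounds.

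\emph{Step 3: local compactness.} For \(P_{t,N}\) this is the last assertion of Corollary~\ref{cor:ambient-operator-compactness}, since the operator is bounded into \(\Fpqa{\beta}{p_2}{q_2}\). For \(P_{t,N}^+\), Montel's theorem is not available, so I would instead show equicontinuity on compact sets via Arzelà--Ascoli. For \(\abs z,\abs{z'}\le R\) and \(\abs w<N\),
\[
\bigl|\e^{t\operatorname{Re}(z\overline w)}-\e^{t\operatorname{Re}(z'\overline w)}\bigr|\le tN\e^{tNR}\abs{z-z'}.
\]
Combined with the integral bound of Step 1, this gives a uniform Lipschitz estimate on \(\{\abs z\le R\}\). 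Step 1 also supplies uniform local boundedness.

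\emph{Conclusion and main obstacle.} Corollary~\ref{cor:ambient-operator-compactness}\textup{(a)} now yields compactness of both truncations. The only delicate point is that \(P_{t,N}^+f\) is not holomorphic, so the equicontinuity in Step 3 must replace the Montel argument used for \(P_{t,N}\). The Lipschitz kernel estimate handles this directly.
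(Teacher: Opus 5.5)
Your proposal is correct and follows essentially the same route as the paper: the local $L^1$ bound from H\"older's inequality (using $p_1,q_1\geq1$) gives the growth bound $C_N\e^{tN\abs z}\norm{f}_{p_1,q_1,\alpha}$, hence boundedness and uniformly vanishing target tails. Local relative compactness comes from Montel's theorem for $P_{t,N}$ and from the Lipschitz kernel estimate with Arzel\`a--Ascoli for $P_{t,N}^+$, after which Corollary~\ref{cor:ambient-operator-compactness}\textup{(a)} gives compactness of both truncations.
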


\begin{proof}
Since $p_1,q_1\geq1$, H\"older's inequality in the angular and radial
variables gives
\begin{equation}
\label{eq:local-L1-truncation}
\int_{\abs w<N}\abs{f(w)}\,dA(w)
\leq C_N\norm{f}_{p_1,q_1,\alpha}.
\end{equation}
Consequently, after increasing $C_N$ if necessary,
\[
P_{t,N}^+\abs f(z)
\leq
C_N\e^{tN\abs z}\norm{f}_{p_1,q_1,\alpha}.
\]
The same pointwise domination as in
\eqref{eq:pointwise-domination} gives
\[
\abs{P_{t,N}f}
\leq P_{t,N}^+\abs f
\qquad \text{and} \qquad
\abs{P_{t,N}^+f}
\leq P_{t,N}^+\abs f.
\]
Thus the defining integrals converge absolutely and locally uniformly
in $z$. In particular, $P_{t,N}f$ is entire and $P_{t,N}^+f$ is
continuous.
Moreover,
\[
M_{p_2}\bigl(P_{t,N}^+\abs f,r\bigr)
\e^{-\frac{\beta}{2}r^2}
\leq
C_N\e^{tNr-\frac{\beta}{2}r^2}
\norm{f}_{p_1,q_1,\alpha}.
\]
It follows that both operators in the statement are bounded.

\smallskip
\noindent
Next we verify relative compactness with respect to locally
uniform convergence for the maximal truncation. Fix $R>0$. If $\abs z,\abs\zeta\leq R$
and $\abs w<N$, then
\[
\left|
\e^{t\operatorname{Re}(z\overline w)}
-
\e^{t\operatorname{Re}(\zeta\overline w)}
\right|
\leq
t\abs{z-\zeta}\abs w\,\e^{tNR}  \leq
tN\e^{tNR}\abs{z-\zeta}.
\]
Therefore, by \eqref{eq:local-L1-truncation},
\[
\abs{P_{t,N}^+f(z)-P_{t,N}^+f(\zeta)}
\leq
C_{N,R}\abs{z-\zeta}
\norm{f}_{p_1,q_1,\alpha}.
\]
Thus the image under $P_{t,N}^+$ of the source unit ball is locally
uniformly bounded and locally equicontinuous. The Arzel\`a--Ascoli
theorem and a diagonal argument show that it is relatively compact
with respect to locally uniform convergence.

\smallskip
\noindent
Finally, the pointwise domination above gives
\[
\sup_{\norm{f}_{p_1,q_1,\alpha}\leq1}
\norm{P_{t,N}f}_{p_2,q_2,\beta;[R,\infty)}
\leq
\sup_{\norm{f}_{p_1,q_1,\alpha}\leq1}
\norm{P_{t,N}^+\abs f}_{p_2,q_2,\beta;[R,\infty)}
\]
and
\[
\sup_{\norm{f}_{p_1,q_1,\alpha}\leq1}
\norm{P_{t,N}^+f}_{p_2,q_2,\beta;[R,\infty)}
\leq
\sup_{\norm{f}_{p_1,q_1,\alpha}\leq1}
\norm{P_{t,N}^+\abs f}_{p_2,q_2,\beta;[R,\infty)}.
\]
Moreover,
\[
\sup_{\norm{f}_{p_1,q_1,\alpha}\leq1}
\norm{P_{t,N}^+\abs f}_{p_2,q_2,\beta;[R,\infty)}
\leq
C_N
\begin{cases}
\displaystyle
\left(
q_2\beta\int_R^\infty
\e^{q_2tNr-\frac{q_2\beta}{2}r^2}r\,dr
\right)^{\frac1{q_2}},
& 0<q_2<\infty,\\[4mm]
\displaystyle
\sup_{r\geq R}
\e^{tNr-\frac{\beta}{2}r^2},
& q_2=\infty.
\end{cases}
\]
Both tail bounds tend to zero as $R\to\infty$. Hence Corollary~\ref{cor:ambient-operator-compactness}\textup{(a)}, together with its analytic
specialization, shows that $P_{t,N}$ is compact, while the general
case shows that $P_{t,N}^+$ is compact.
\end{proof}

\begin{proposition}[Compactness transfer]
\label{prop:maximal-implies-fock-compact}
Suppose that
\(
P_t^+:
\Lpqa{\alpha}{p_1}{q_1}
\longrightarrow
\Lpqa{\beta}{p_2}{q_2}
\)
is compact. Then
\(
P_t:
\Lpqa{\alpha}{p_1}{q_1}
\longrightarrow
\Fpqa{\beta}{p_2}{q_2}
\)
is compact.
\end{proposition}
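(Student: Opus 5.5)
Compactness of \(P_t^+\) implies boundedness of \(P_t^+\), hence of \(P_t\). By Theorem~\ref{thm:main-boundedness} we therefore have \(p_1,q_1\geq1\), and \(P_t\) maps into \(\Fpqa{\beta}{p_2}{q_2}\). The plan is to write \(P_t=P_{t,N}+P_tQ_N\). By Lemma~\ref{lem:compact-source-truncations}, \(P_{t,N}\) is compact, so it suffices to show
\[
\norm{P_tQ_N}_{\Lpqa{\alpha}{p_1}{q_1}\to\Lpqa{\beta}{p_2}{q_2}}\longrightarrow0 .
\]
Compact operators between quasi-Banach spaces form a closed set in the operator quasi-norm: approximate the image of the unit ball by finite nets and use the \(s\)-triangle inequality with \(s=s_{p_2,q_2}\), where the target is complete. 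Hence \(P_t\) is compact. By \eqref{eq:pointwise-domination},
\[
\abs{P_tQ_Nf}\leq P_t^+(Q_N\abs f)=P_t^+Q_N\abs f ,
\]
and the quasi-norms are lattice-monotone. The task thus reduces to showing that \(\norm{P_t^+Q_N}\to0\), i.e.\ \(\sup_{\norm f\leq1}\norm{P_t^+Q_N\abs f}_{p_2,q_2,\beta}\to0\).

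\textbf{Tail decay from compactness of \(P_t^+\).} Suppose not. Then there are \(\varepsilon>0\), \(N_j\to\infty\), and \(f_j\geq0\) with \(\norm{f_j}\leq1\), \(f_j\) supported in \(\{\abs w\geq N_j\}\), and \(\norm{P_t^+f_j}_{p_2,q_2,\beta}\geq\varepsilon\). I would show \(P_t^+f_j\to0\) locally uniformly. For \(\abs z\leq R\), the absolute-convergence estimate from the proof of Proposition~\ref{prop:boundedness-sufficient} gives
\[
P_t^+f_j(z)\lesssim \int_{N_j}^\infty M_{p_1}(f_j,s)\,\e^{-\frac{\alpha}{2}s^2}\,\e^{tRs-\mu s^2}\,s\,ds,
\qquad \mu=t-\tfrac{\alpha}{2}>0 .
\]
By Hölder, this is at most \(\norm{f_j}_{p_1,q_1,\alpha}\) times the \(L^{q_1'}((N_j,\infty),s\,ds)\) norm of \(\e^{tRs-\mu s^2}\), and that norm tends to \(0\). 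Corollary~\ref{cor:ambient-operator-compactness}(b), applied to the compact operator \(T=P_t^+\) (which maps into \(C(\mathbb C)\cap\Lpqa{\beta}{p_2}{q_2}\)), then gives \(\norm{P_t^+f_j}\to0\). This is a contradiction.

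\textbf{Main obstacle.} Corollary~\ref{cor:ambient-operator-compactness}(b) must apply as stated. Its proof only uses that a subsequence of \(Tf_n\) converges in \(L^{p,q}_\beta\) and that the local pieces vanish, so it works for \(P_t^+\) with values in continuous functions. The other delicate point is the closedness of compact operators in the quasi-Banach setting. I would handle this directly by the standard \(\varepsilon\)-net argument, with the constant \(2^{1/s}\) coming from the \(s\)-triangle inequality.
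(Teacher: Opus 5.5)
Your proposal is correct and follows essentially the same route as the paper. Both arguments split $P_t=P_{t,N}+P_tQ_N$, dominate the tail by $P_t^+Q_N\abs{f}$, use Hölder with $\mu=t-\frac{\alpha}{2}>0$ to get locally uniform vanishing of the source tails, and reach a contradiction via Corollary~\ref{cor:ambient-operator-compactness}(b). Note that $\mu>0$ relies on $\Delta(t;\alpha,\beta)\leq0$, which Theorem~\ref{thm:main-boundedness} supplies along with $p_1,q_1\geq1$; you should state this explicitly.
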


\begin{proof}
Compactness implies boundedness. Hence
Theorem~\ref{thm:main-boundedness} gives
\[
p_1,q_1\geq1
\qquad\text{and}\qquad
\Delta(t;\alpha,\beta)\leq0.
\]
In particular,
\[
2t-\alpha\geq\frac{t^2}{\beta}>0.
\]

Since
\(
P_t-P_{t,N}=P_tQ_N,
\)
the pointwise domination \eqref{eq:pointwise-domination} gives
\[
\abs{P_tQ_Nf(z)}
\leq
P_t^+Q_N(\abs f)(z).
\]
Therefore,
\[
\norm{(P_t-P_{t,N})f}_{p_2,q_2,\beta}
\leq
\norm{P_t^+Q_N(\abs f)}_{p_2,q_2,\beta}
\leq
\norm{P_t^+Q_N}
\norm{f}_{p_1,q_1,\alpha},
\]
and hence
\[
\norm{P_t-P_{t,N}}_{
\Lpqa{\alpha}{p_1}{q_1}
\to
\Fpqa{\beta}{p_2}{q_2}}
\leq
\norm{P_t^+Q_N}_{
\Lpqa{\alpha}{p_1}{q_1}
\to
\Lpqa{\beta}{p_2}{q_2}}.
\]
By Lemma~\ref{lem:compact-source-truncations}, each
\(
P_{t,N}:
\Lpqa{\alpha}{p_1}{q_1}
\longrightarrow
\Fpqa{\beta}{p_2}{q_2}
\)
is compact. It therefore remains to prove that
\[
\norm{P_t^+Q_N}_{
\Lpqa{\alpha}{p_1}{q_1}
\to
\Lpqa{\beta}{p_2}{q_2}}
\longrightarrow0.
\]
Indeed, this would show that $P_{t,N}\to P_t$ in operator quasi-norm. Since $\Fpqa{\beta}{p_2}{q_2}$ is complete, $P_t$
would then be compact.

\smallskip
\noindent
Fix $R>0$. For $\abs z\leq R$, polar coordinates and
$M_1(f,r)\leq M_{p_1}(f,r)$ give
\[
\begin{aligned}
\abs{P_t^+Q_Nf(z)}
&\leq
\frac{t}{\pi}
\int_{\abs w\geq N}
\abs{f(w)}
\e^{tR\abs w}
\e^{-t\abs w^2}\,dA(w)
\\
&\leq
2t\int_N^\infty
M_{p_1}(f,r)
\e^{-\frac{\alpha}{2}r^2}
\e^{-\frac{2t-\alpha}{2}r^2+tRr}r\,dr.
\end{aligned}
\]
H\"older's inequality in the radial variable, with the usual endpoint
interpretations, yields
\[
\sup_{\abs z\leq R}\abs{P_t^+Q_Nf(z)}
\lesssim
\norm{f}_{p_1,q_1,\alpha}\Lambda_R(N),
\]
where
\[
\Lambda_R(N)
:=
\norm{
\e^{-\frac{2t-\alpha}{2}r^2+tRr}
\chi_{[N,\infty)}(r)
}_{L^{q_1'}((0,\infty),r\,dr)}.
\]
Since $2t-\alpha>0$,
\(
\Lambda_R(N)\longrightarrow0\) as 
 \(N\to\infty\).
Consequently,
\begin{equation}
\label{eq:maximal-source-tail-uniform-local}
\lim_{N\to\infty}
\sup_{\norm{f}_{p_1,q_1,\alpha}\leq1}
\sup_{\abs z\leq R}
\abs{P_t^+Q_Nf(z)}
=0.
\end{equation}
Suppose that the asserted convergence in operator quasi-norm fails. Then
there exist $\varepsilon>0$, integers $N_j\to\infty$, and functions
$f_j$ such that
\[
\norm{f_j}_{p_1,q_1,\alpha}\leq1
\qquad\text{and}\qquad
\norm{P_t^+Q_{N_j}f_j}_{p_2,q_2,\beta}\geq\varepsilon.
\]
Since each $Q_{N_j}$ is a contraction, the sequence
\[
g_j:=Q_{N_j}f_j
\]
is bounded in $\Lpqa{\alpha}{p_1}{q_1}$. Moreover,
\eqref{eq:maximal-source-tail-uniform-local} gives
\[
P_t^+g_j\longrightarrow0
\qquad\text{locally uniformly on }\mathbb C.
\]
Since $P_t^+$ is compact and takes values in
$C(\mathbb C)$,
Corollary~\ref{cor:ambient-operator-compactness}\textup{(b)} yields
\[
\norm{P_t^+g_j}_{p_2,q_2,\beta}\longrightarrow0,
\]
contrary to the choice of $(f_j)_j$. This proves the required
convergence in operator quasi-norm and hence the compactness of $P_t$.
\end{proof}

\subsection{Sufficient conditions for \(P_t^+\)}
\label{subsec:compactness-sufficiency}

It remains to prove compactness of $P_t^+$ under the strict parameter
conditions. We approximate $P_t^+$ by the compact source truncations
$P_{t,N}^+$ and estimate the source tails by the annular discretization
and the tail bounds from
Proposition~\ref{prop:drifted-gaussian-operator}.

\begin{proposition}
\label{prop:compactness-sufficient}
Let
\(
1\leq p_1,q_1\leq\infty\) and \(
0<p_2,q_2\leq\infty\).
Suppose that either
\[
\Delta(t;\alpha,\beta)<0,
\]
or
\[
\Delta(t;\alpha,\beta)=0,
\qquad
q_1<q_2,
\qquad
d(p_2,q_2)<d(p_1,q_1).
\]
Then
\(
P_t^+:
\Lpqa{\alpha}{p_1}{q_1}
\longrightarrow
\Lpqa{\beta}{p_2}{q_2}
\)
is compact.
\end{proposition}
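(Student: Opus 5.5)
The plan is to write $P_t^+=P_{t,N}^++P_t^+Q_N$ and show that $\norm{P_t^+Q_N}\to0$ in operator quasi-norm from $\Lpqa{\alpha}{p_1}{q_1}$ to $\Lpqa{\beta}{p_2}{q_2}$. By Lemma~\ref{lem:compact-source-truncations}, each $P_{t,N}^+$ is compact. Compact operators into the quasi-Banach space $\Lpqa{\beta}{p_2}{q_2}$ are closed under operator quasi-norm limits. Indeed, if $\norm{T-T_N}\to0$, then a finite $\varepsilon$-net for $T_N$ of the unit ball gives, via the $s$-triangle inequality, a finite net for $T$, so the image is totally bounded in a complete space. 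Hence $P_t^+$ will be compact.

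\medskip
\noindent
For the tail estimate, fix $f$ with $\norm{f}_{p_1,q_1,\alpha}\le1$ and set $g:=\abs{Q_Nf}$. Then $\abs{P_t^+Q_Nf}\le P_t^+g$, and $g$ vanishes on $\{\abs w<N\}$. With $Y_k:=X_{p_1,q_1,\alpha}(g;k)$ we get $Y_k=0$ for $k<N$, hence for $k\le N-1$, and $\norm{Y}_{\ell^{q_1}}\le1$. Lemma~\ref{lem:unified-shell-estimate} and Proposition~\ref{prop:annular-discretization} (or the supremum form when $q_2=\infty$) then give, exactly as in \eqref{eq:positive-projection-sequence-bound},
\[
\norm{P_t^+Q_Nf}_{p_2,q_2,\beta}
\lesssim
\norm{\mathcal T Y}_{\ell_{\mathrm{sh}}^{q_2}}
=
\norm{\mathcal T_{N-1} Y}_{\ell_{\mathrm{sh}}^{q_2}}
\le
\norm{\mathcal T_{N-1}}_{\ell^{q_1}\to\ell_{\mathrm{sh}}^{q_2}}.
\]
The operator $\mathcal T$ here has parameters $\gamma_1=\beta/4$, $\gamma_0=\eta$, drift $\tau=t/\beta$, and $\sigma=\frac1{q_1'}-\frac1{\rho_{12}}$. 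The implicit constant is independent of $N$ and $f$.

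\medskip
\noindent
It remains to see that $\norm{\mathcal T_{N-1}}\to0$ by Proposition~\ref{prop:drifted-gaussian-operator}. When $\Delta<0$ we have $\eta>0$, and part (a) gives decay like $\e^{-\eta(N-1)^2/2}$. When $\Delta=0$ we have $\eta=0$ and $q_1<q_2$, so part (b) applies with decay $(1+N)^{\sigma+1/q_2}$. We therefore need $\sigma+\frac1{q_2}<0$. If $p_1<p_2^\sharp$ (so $p_2^\sharp=p_2$), the computation in Proposition~\ref{prop:boundedness-sufficient} gives $\sigma+\frac1{q_2}=d(p_2,q_2)-d(p_1,q_1)<0$. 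If $p_2^\sharp\le p_1$, then $\sigma+\frac1{q_2}=\frac1{q_2}-\frac1{q_1}<0$ since $q_1<q_2$.

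\medskip
\noindent
The main point to check is this last case split. In the branch $p_2^\sharp\le p_1$ only strictness of $q_1<q_2$ is used, and in the other branch only the strict defect inequality. One should also confirm that the shell estimate uses $\sigma$ built from $\rho_{12}$ rather than from $p_2$, so that quasi-Banach $p_2<1$ is covered.
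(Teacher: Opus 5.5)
Your proposal is correct and follows essentially the same route as the paper: you split $P_t^+=P_{t,N}^++P_t^+Q_N$ and bound the source tail, via the unified shell estimate and annular discretization, by the tail operator $\mathcal T_{N-1}$ of Proposition~\ref{prop:drifted-gaussian-operator} with $\gamma_1=\beta/4$ and $\gamma_0=\eta$. You then conclude from parts (a) and (b), using the sign computation $\sigma+\frac1{q_2}<0$ in the two branches $p_2^\sharp\le p_1$ and $p_1<p_2^\sharp$, and your choice of $Y$ built from $\abs{Q_Nf}$ rather than $\abs f$ is just slightly cleaner bookkeeping of the same estimate.
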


\begin{proof}
Retain the notation from Subsection~\ref{subsec:unified-shell-estimate}:
\[
p_2^\sharp:=\max\{1,p_2\},
\qquad
\rho_{12}:=\rho(p_1,p_2^\sharp),
\qquad
\sigma:=\frac1{q_1'}-\frac1{\rho_{12}},
\]
and
\[
\tau:=\frac{t}{\beta},
\qquad
\eta:=-\frac{\Delta(t;\alpha,\beta)}{2\beta}.
\]
Let \(f\in\Lpqa{\alpha}{p_1}{q_1}\), and put
\[
Y_k:=X_{p_1,q_1,\alpha}(\abs f;k),
\qquad k\geq0.
\]
Then
\[
\norm{Y}_{\ell^{q_1}}
=
\norm{f}_{p_1,q_1,\alpha}.
\]
Since
\[
X_{p_1,q_1,\alpha}(\abs{Q_Nf};k)=0,
\qquad k<N,
\]
Lemma~\ref{lem:unified-shell-estimate} and
Proposition~\ref{prop:annular-discretization}, with the defining
supremum used when \(q_2=\infty\), give
\[
\norm{P_t^+Q_Nf}_{p_2,q_2,\beta}
\lesssim
\norm{\mathcal T^{[N]}Y}_{\ell_{\mathrm{sh}}^{q_2}},
\]
where
\[
(\mathcal T^{[N]}Y)_\ell
:=
\sum_{k\geq N}
\e^{-\frac{\beta}{4}\abs{\ell-\tau k}^2}
\e^{-\eta k^2}
(1+k)^\sigma Y_k.
\]
Here and below, the implicit constants are independent of \(N\) and \(f\).  Therefore, 
\[
\norm{P_t^+-P_{t,N}^+}_{
\Lpqa{\alpha}{p_1}{q_1}
\to
\Lpqa{\beta}{p_2}{q_2}} = \norm{P_t^+Q_N}_{\Lpqa{\alpha}{p_1}{q_1}
\to
\Lpqa{\beta}{p_2}{q_2}}
\lesssim
\norm{\mathcal T^{[N]}}_{
\ell^{q_1}\to\ell_{\mathrm{sh}}^{q_2}}.
\]
The operator \(\mathcal T^{[N]}\) is precisely the
tail operator \(\mathcal T_{N-1}\) from
Proposition~\ref{prop:drifted-gaussian-operator} with
\[
\gamma_1=\frac{\beta}{4},
\qquad
\gamma_0=\eta.
\]

\smallskip
\noindent
\emph{The strict Gaussian region.}
If \(\Delta(t;\alpha,\beta)<0\), then \(\eta>0\).
Proposition~\ref{prop:drifted-gaussian-operator}\textup{(a)} gives
\[
\norm{\mathcal T^{[N]}}_{
\ell^{q_1}\to\ell_{\mathrm{sh}}^{q_2}}
\lesssim
\e^{-\frac{\eta}{2}(N-1)^2}
\lesssim
\e^{-\frac{\eta}{8}N^2},
\qquad N\geq2.
\]

\smallskip
\noindent
\emph{The Gaussian boundary.}
Suppose that
\(
\Delta(t;\alpha,\beta)=0.
\)
Then
\(\eta=0\).
The same computation as in the proof of
Proposition~\ref{prop:boundedness-sufficient}, now using the strict
conditions
\(
q_1<q_2\) and \(
d(p_2,q_2)<d(p_1,q_1)\),
gives
\[
\sigma+\frac1{q_2}<0.
\]
Hence, with
\(
\delta:=-\left(\sigma+\frac1{q_2}\right)>0,
\)
Proposition~\ref{prop:drifted-gaussian-operator}\textup{(b)} yields
\[
\norm{\mathcal T^{[N]}}_{
\ell^{q_1}\to\ell_{\mathrm{sh}}^{q_2}}
\lesssim
(1+N)^{-\delta}.
\]

\smallskip
\noindent
In either case,
\[
\norm{P_t^+-P_{t,N}^+}_{
\Lpqa{\alpha}{p_1}{q_1}
\to
\Lpqa{\beta}{p_2}{q_2}}
\longrightarrow0.
\]
Since \(\Lpqa{\beta}{p_2}{q_2}\) is complete and each \(P_{t,N}^+\) is compact by
Lemma~\ref{lem:compact-source-truncations}, the operator quasi-norm convergence 
\(
P_{t,N}^+\longrightarrow P_t^+
\)
implies that \(P_t^+\) is compact.
\end{proof}

\medskip
\noindent
We record two immediate consequences of
Theorem~\ref{thm:main-compactness}.

\begin{corollary}
\label{cor:equal-gaussian-compactness}
Let \(\alpha,t>0\) and
\(
0<p_1,q_1,p_2,q_2\leq\infty.
\)
Then
\(
P_t, P_t^+:
\Lpqa{\alpha}{p_1}{q_1}
\longrightarrow
\Lpqa{\alpha}{p_2}{q_2}
\)
are compact if and only if
\[
t=\alpha,
\qquad
p_1,q_1\geq1,
\qquad
q_1<q_2,
\qquad
d(p_2,q_2)<d(p_1,q_1).
\]
\end{corollary}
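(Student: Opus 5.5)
The corollary is the specialization \(\beta=\alpha\) of Theorem~\ref{thm:main-compactness}, so the plan is to apply that theorem and simplify the Gaussian condition. The one computation needed is the sign of
\[
\Delta(t;\alpha,\alpha)=t^2-2\alpha t+\alpha^2=(t-\alpha)^2 .
\]
This quantity is always nonnegative. Hence the strict Gaussian region \(\Delta<0\) is empty, and the Gaussian boundary \(\Delta=0\) occurs exactly when \(t=\alpha\).

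For the forward direction, suppose \(P_t\) or \(P_t^+\) is compact from \(\Lpqa{\alpha}{p_1}{q_1}\) to \(\Lpqa{\alpha}{p_2}{q_2}\).
\begin{enumerate}[label=\textup{(\arabic*)}]
\item Theorem~\ref{thm:main-compactness}, implication (i)\(\Rightarrow\)(iii) or (ii)\(\Rightarrow\)(iii), gives \(p_1,q_1\ge1\).
\item It also gives either \(\Delta<0\), or \(\Delta=0\) together with the two strict boundary inequalities.
\item The first alternative is impossible because \((t-\alpha)^2\ge0\).
\item So the second holds, which yields \(t=\alpha\), \(q_1<q_2\), and \(d(p_2,q_2)<d(p_1,q_1)\).
\end{enumerate}

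For the converse, assume \(t=\alpha\), \(p_1,q_1\ge1\), \(q_1<q_2\), and \(d(p_2,q_2)<d(p_1,q_1)\). Then \(\Delta=0\) and all the hypotheses in (iii) of Theorem~\ref{thm:main-compactness} hold. That theorem then gives compactness of both \(P_t\) and \(P_t^+\). Equivalently, one can cite Proposition~\ref{prop:compactness-sufficient} for \(P_t^+\) and then Proposition~\ref{prop:maximal-implies-fock-compact} for \(P_t\).

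No real obstacle arises. The only subtlety is bookkeeping: compactness of \(P_t\) into \(\Lpqa{\alpha}{p_2}{q_2}\) is equivalent to compactness into the closed subspace \(\Fpqa{\alpha}{p_2}{q_2}\), as noted at the start of Section~\ref{sec:compactness}. The statement is therefore unambiguous whichever target is meant.
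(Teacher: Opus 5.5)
Your proposal is correct and matches the paper, which records this corollary as an immediate consequence of Theorem~\ref{thm:main-compactness} with \(\beta=\alpha\). The key observation is the one you use: \(\Delta(t;\alpha,\alpha)=(t-\alpha)^2\ge0\), so the strict Gaussian region is empty and the boundary case is exactly \(t=\alpha\).
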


\begin{corollary}
\label{cor:equal-mixed-exponents-compactness}
Let \(\alpha,\beta,t>0\) and \(0<p,q\leq\infty\). Then
\(
P_t, P_t^+:
\Lpqa{\alpha}{p}{q}
\longrightarrow
\Lpqa{\beta}{p}{q}
\)
are compact if and only if
\[
p,q\geq1
\qquad\text{and}\qquad
\Delta(t;\alpha,\beta)<0.
\]
\end{corollary}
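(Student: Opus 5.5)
This corollary follows by specializing Theorem~\ref{thm:main-compactness} to $p_1=p_2=p$ and $q_1=q_2=q$. The plan is to check that condition (iii) of that theorem reduces to the stated one. The source requirement $p,q\geq1$ carries over verbatim. In the strict Gaussian region $\Delta(t;\alpha,\beta)<0$, condition (iii) imposes nothing more. So the whole task is to show that the boundary alternative can never occur when the exponents coincide.

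At the Gaussian boundary $\Delta(t;\alpha,\beta)=0$, condition (iii) requires $q_1<q_2$, which here reads $q<q$. This fails. It also requires $d(p_2,q_2)<d(p_1,q_1)$, which here reads $d(p,q)<d(p,q)$ and fails as well; either failure suffices. Hence (iii) holds exactly when $p,q\geq1$ and $\Delta(t;\alpha,\beta)<0$.

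The equivalence (i)$\Leftrightarrow$(ii)$\Leftrightarrow$(iii) of Theorem~\ref{thm:main-compactness} then gives the claim for both $P_t$ and $P_t^+$ simultaneously. Compactness of $P_t$ into $\Lpqa{\beta}{p}{q}$ and into $\Fpqa{\beta}{p}{q}$ coincide, as noted at the start of Section~\ref{sec:compactness}, so the target may be read either way. There is no real obstacle here; the only point needing care is the observation that the boundary case is vacuous when $q_1=q_2$. This is consistent with the general principle that the strict outer-exponent gain is necessary at $\Delta=0$, and with Corollary~\ref{cor:equal-mixed-exponents}, since the boundedness criterion differs only by allowing equality $\Delta=0$.
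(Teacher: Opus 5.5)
Your proposal is correct and matches the paper, which records this corollary as an immediate consequence of Theorem~\ref{thm:main-compactness}. With $p_1=p_2=p$ and $q_1=q_2=q$, the strict boundary conditions $q_1<q_2$ and $d(p_2,q_2)<d(p_1,q_1)$ cannot hold, so only the case $p,q\geq1$ with $\Delta(t;\alpha,\beta)<0$ remains.
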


\begin{remark}
\label{rem:hou-liu-boundary}
We compare the classical Gaussian $L^p$-to-$L^q$ specialization of
Theorem~\ref{thm:main-compactness} with
\cite[Theorem~3.4]{HouLiu2023} in the case $n=1$. The maximal Fock
projection denoted by $Q_t$ in \cite{HouLiu2023} is precisely our
operator $P_t^+$. In the notation of that paper, let $a,b>0$ denote the source and target
Gaussian parameters and let $1\leq p,q<\infty$. For $c>0$, write
\[
dv_c(z):=\frac{c}{\pi}\e^{-c\abs z^2}\,dA(z).
\]
Thus the source and target spaces are
$L^p(\mathbb C,dv_a)$ and $L^q(\mathbb C,dv_b)$, respectively.
With our normalization,
\[
L^p(\mathbb C,dv_a)
=
L^{p,p}_{2a/p} 
\qquad \text{and} \qquad
L^q(\mathbb C,dv_b)
=
L^{q,q}_{2b/q}
\]
isometrically. Accordingly,
\[
pq\,
\Delta\left(t;\frac{2a}{p},\frac{2b}{q}\right)
=
pqt^2-4bpt+4ab.
\]
Thus the strict Gaussian condition in
\cite[Theorem~3.4]{HouLiu2023} agrees with the strict Gaussian region
in Theorem~\ref{thm:main-compactness}.

\smallskip
\noindent
At the Gaussian boundary, however, the corresponding diagonal
mixed-norm spaces satisfy
\[
d(p,p)=d(q,q)=0.
\]
Hence the strict defect condition in
Theorem~\ref{thm:main-compactness} cannot hold, and compactness is
excluded at the Gaussian boundary. This differs from the second
alternative in \cite[Theorem~3.4]{HouLiu2023}. The normalized exponential packets used in
Proposition~\ref{prop:compactness-necessary} give a direct
obstruction to compactness of $P_t$ at the Gaussian boundary.

\smallskip
\noindent
We also note an issue in the proof of the boundary case of
\cite[Theorem~3.4]{HouLiu2023}. There, boundedness of the maximal projection
$Q_t=P_t^+$ on the source space,
\[
Q_t:
L^p(\mathbb C,dv_a)
\longrightarrow
L^p(\mathbb C,dv_a)
\]
is invoked. By \cite[Theorem~2.6]{HouLiu2023}, applied with equal
source and target parameters and exponents, this boundedness is
equivalent to
\[
(pt-2a)^2=0,
\qquad\text{that is,}\qquad
pt=2a.
\]
Combining this identity with the Gaussian-boundary relation
\(
pqt^2-4bpt+4ab=0
\)
gives
\(
q=\frac{pb}{a},
\)
which contradicts the strict condition
\(
q<\frac{pb}{a}
\)
assumed in the second alternative of
\cite[Theorem~3.4]{HouLiu2023}. Thus the same-space boundedness used
in that argument is unavailable throughout the stated boundary
regime. This appears to account for the discrepancy.
\end{remark}

%================================================
\section{A transfer principle for Gaussian integral operators}
\label{sec:gaussian-twisted}
%================================================

The classifications proved above are not confined to the canonical
normalization of the Fock projection. The decisive observation is
that dilation and rotation of the output variable, combined with
multiplication by a real Gaussian factor, reduce a substantially
broader class of Gaussian integral operators to \(P_t\) and \(P_t^+\).
Under this reduction, the mixed-norm structure is preserved and the
effect of the transformation is absorbed into a modified target
Gaussian parameter. Theorem~\ref{thm:main-boundedness} therefore yields the corresponding
boundedness classification for this broader family, while
Theorem~\ref{thm:main-compactness} yields the compactness
classification.

\smallskip
\noindent
Integral operators generated by Gaussian and Fock kernels were
studied on classical Gaussian $L^p$-spaces by Dostani\'c and Zhu
\cite{DostanicZhu2008} and Furdui \cite{Furdui2008}. Hou and Liu subsequently studied the $L^p$-to-$L^q$ boundedness
and compactness of the canonical Fock and maximal Fock projections,
and obtained boundedness results for generalized Fock-kernel operators;
see \cite{HouLiu2023}. Liu, Shi, and Wei obtained the corresponding
boundedness classification in the Banach mixed-norm range
\cite[Section~5]{LiuShiWei2027}.
 The theorem below removes the Banach restriction, determines all
admissible quasi-Banach target cases, and gives the corresponding
compactness classification.
The reduction also shows that the full classification is governed by a
single transformed Gaussian balance condition, and it yields sharp
consequences for Berezin transforms and more general Fock-kernel
operators.

%------------------------------------------------
\medskip
\noindent
We begin with the elementary dilation--multiplier reduction underlying
the results of this section.

\begin{definition}
Let \(\lambda>0\), \(a\in\mathbb R\), and \(\omega\in\mathbb T\).
Define
\[
(U_{\lambda,a,\omega}f)(z)
:=
\e^{a\abs z^2}f(\lambda\omega z).
\]
\end{definition}

\begin{lemma}[Dilation--multiplier isomorphism]
\label{lem:dilation-multiplier}
Let \(\beta>0\), \(0<p,q\leq\infty\), and suppose that
\(
\beta>2a.
\)
Set
\[
\beta_{\lambda,a}
:=
\frac{\beta-2a}{\lambda^2}.
\]
Then
\(
U_{\lambda,a,\omega}:
L_{\beta_{\lambda,a}}^{p,q}
\longrightarrow
L_\beta^{p,q}\)
is a topological isomorphism, with
\[
U_{\lambda,a,\omega}^{-1}
=
U_{\lambda^{-1},-a/\lambda^2,\overline{\omega}}.
\]
When \(a=0\), it restricts to a topological isomorphism
\(
U_{\lambda,0,\omega}:
\mathcal F_{\beta/\lambda^2}^{p,q}
\longrightarrow
\mathcal F_\beta^{p,q}\).
\end{lemma}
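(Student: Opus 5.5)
The plan is to compute the integral means of \(U_{\lambda,a,\omega}f\) directly and match the Gaussian weights by a change of radial variable. Fix \(f\) measurable and \(r>0\). Rotation by \(\omega\) preserves normalized Lebesgue measure on \(\T\), so
\[
M_p(U_{\lambda,a,\omega}f,r)=\e^{ar^2}M_p(f,\lambda r),
\]
for \(p<\infty\) by the substitution \(\theta\mapsto\theta+\arg\omega\), and for \(p=\infty\) by the same invariance of the essential supremum.

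The key identity is then
\[
M_p(U_{\lambda,a,\omega}f,r)\,\e^{-\frac{\beta}{2}r^2}
=M_p(f,\lambda r)\,\e^{-\frac{\beta-2a}{2}r^2}
=M_p(f,u)\,\e^{-\frac{\beta_{\lambda,a}}{2}u^2},\qquad u=\lambda r,
\]
because \(\frac{\beta-2a}{2}r^2=\frac{\beta_{\lambda,a}}{2}\lambda^2r^2\).

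For \(q=\infty\), taking the essential supremum over \(r>0\), equivalently over \(u>0\), gives \(\norm{U_{\lambda,a,\omega}f}_{p,\infty,\beta}=\norm{f}_{p,\infty,\beta_{\lambda,a}}\).

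For \(q<\infty\), substituting \(u=\lambda r\) (so \(r\,dr=\lambda^{-2}u\,du\)) in the defining integral gives
\[
\norm{U_{\lambda,a,\omega}f}_{p,q,\beta}^q=\frac{q\beta}{\lambda^2}\int_0^\infty M_p(f,u)^q\e^{-\frac{q\beta_{\lambda,a}}{2}u^2}u\,du=\frac{\beta}{\beta-2a}\,\norm{f}_{p,q,\beta_{\lambda,a}}^q.
\]
So \(U_{\lambda,a,\omega}\) is, up to the constant \(\left(\frac{\beta}{\beta-2a}\right)^{1/q}\), an isometry \(L^{p,q}_{\beta_{\lambda,a}}\to L^{p,q}_\beta\). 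The hypothesis \(\beta>2a\) is used exactly here: it makes \(\beta_{\lambda,a}>0\), so the source space is well defined and the constant is finite and positive.

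For the inverse, the plan is to verify the composition pointwise:
\[
U_{\lambda,a,\omega}U_{\lambda^{-1},-a/\lambda^2,\overline\omega}g(z)=\e^{a\abs z^2}\e^{-\frac{a}{\lambda^2}\abs{\lambda\omega z}^2}g(\lambda^{-1}\overline\omega\lambda\omega z)=g(z),
\]
and symmetrically in the other order. Both maps are linear and carry measurable functions to measurable functions, so \(U_{\lambda,a,\omega}\) is a bijection on measurable functions. Applying the norm identity with \(g=U_{\lambda,a,\omega}f\) shows that the inverse is bounded, so \(U_{\lambda,a,\omega}\) is a topological isomorphism.

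When \(a=0\), \(U_{\lambda,0,\omega}f(z)=f(\lambda\omega z)\) is entire exactly when \(f\) is, since the inverse is again a linear change of variable. In this case \(\beta_{\lambda,0}=\beta/\lambda^2\), and intersecting with \(H(\C)\) gives the restricted isomorphism \(\mathcal F^{p,q}_{\beta/\lambda^2}\to\mathcal F^{p,q}_\beta\).

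No step is a real obstacle. The only points needing care are the bookkeeping of the normalizing factor \(q\alpha\) in the mixed quasi-norm, which yields the harmless constant \(\beta/(\beta-2a)\), and the essential supremum conventions when \(p=\infty\) or \(q=\infty\). The latter are preserved because rotations and dilations map null sets to null sets.
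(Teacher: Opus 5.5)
Your proposal is correct and follows essentially the same route as the paper: rotation invariance gives $M_p(U_{\lambda,a,\omega}f,r)=\e^{ar^2}M_p(f,\lambda r)$, and the substitution $u=\lambda r$ yields $\norm{U_{\lambda,a,\omega}f}_{p,q,\beta}^q=\frac{\beta}{\beta-2a}\norm{f}_{p,q,\beta_{\lambda,a}}^q$ for $q<\infty$ (with equality of norms for $q=\infty$), while the inverse formula and the preservation of holomorphy when $a=0$ are checked directly. Your explicit composition check, and your derivation of surjectivity and of the boundedness of the inverse from the norm identity, simply spell out what the paper calls immediate.
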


\begin{proof}
Rotation invariance of the angular means gives
\[
M_p(U_{\lambda,a,\omega}f,r)
=
\e^{a \, r^2}M_p(f,\lambda r).
\]
If \(0<q<\infty\), the change of variables \(s=\lambda r\) yields
\[
\norm{U_{\lambda,a,\omega}f}_{p,q,\beta}^q
=
q\beta
\int_0^\infty
M_p(f,\lambda r)^q
\e^{-\frac{q(\beta - 2a)}2r^2}
r\,dr=
\frac{\beta}{\beta - 2a}
\norm{f}_{p,q,\beta_{\lambda,a}}^q.
\]
In the case \(q = \infty\), \( \norm{U_{\lambda,a,\omega}f}_{p, \infty,\beta} = \norm{f}_{p,\infty,\beta_{\lambda,a}}  \) follows directly from the definition. The
formula for the inverse is immediate. Finally, when \(a=0\), dilation
and rotation preserve holomorphy.
\end{proof}

%------------------------------------------------
\medskip
\noindent
We now apply this reduction to Gaussian integral operators in which
the target multiplier, the kernel coefficient, and the source Gaussian
parameter vary independently.

\smallskip
\noindent
Let
\(
a\in\mathbb R\), \(
b\in\mathbb C\setminus\{0\}\), and \(
c>0\). Initially on bounded compactly supported measurable functions, define 
\[
(T_{a,b,c}f)(z)
:=
\e^{a\abs z^2}
\int_{\C}
\e^{\operatorname{Re}(bz\overline w)-c\abs w^2}
f(w)\,dA(w)
\]
and
\[
(S_{a,b,c}f)(z)
:=
\e^{a\abs z^2}
\int_{\C}
\e^{bz\overline{w} -c\abs w^2}f(w) \,dA(w).
\]
Then
\[
\abs{S_{a,b,c}f}
\leq
T_{a,b,c}|f|.
\]
Write
\(
\lambda:=\frac{\abs b}{c}\) and \(
\omega:=\frac{b}{\abs b}\).
The definitions of \(P_c\), \(P_c^+\), and
\(U_{\lambda,a,\omega}\) give
\begin{equation}
\label{eq:gaussian-twisted-factorization}
S_{a,b,c}
=
\frac{\pi}{c}\,
U_{\lambda,  a,\omega}P_c,
\qquad
T_{a,b,c}
=
\frac{\pi}{c}\,
U_{\lambda, a,\omega}P_c^+.
\end{equation}

Let \(\alpha,\beta>0\) and assume that
\(
a<\frac{\beta}{2}.
\)
The transformed target Gaussian parameter is
\[
\beta_*
:=
\frac{\beta-2a}{\lambda^2}
=
\frac{c^2(\beta-2a)}{\abs b^2}.
\]
Define
\[
\Theta(a,b,c;\alpha,\beta)
:=
\abs b^2+(\alpha-2c)(\beta-2a).
\]
A direct calculation gives
\begin{equation}
\label{eq:gaussian-twisted-barrier}
\Delta(c;\alpha,\beta_*)
=
\frac{c^2}{\abs b^2}
\Theta(a,b,c;\alpha,\beta).
\end{equation}

\begin{theorem}[Gaussian-twisted Fock operators]
\label{thm:gaussian-twisted}
Let
\(
\alpha,\beta,c>0\), \(a \in \mathbb R\) such that \(
a<\frac{\beta}{2}\), \(
b\in\C\setminus\{0\}\),
and let
\(
0<p_1,q_1,p_2,q_2\leq\infty.
\)

\textup{(a)} The following assertions are equivalent:
\begin{enumerate}
\item[\textup{(i)}] The operator
\(
T_{a,b,c}:
L_\alpha^{p_1,q_1}
\longrightarrow
L_\beta^{p_2,q_2}
\)
is bounded.

\item[\textup{(ii)}]
The operator \(
S_{a,b,c}:
L_\alpha^{p_1,q_1}
\longrightarrow
L_\beta^{p_2,q_2}
\)
is bounded.

\item[\textup{(iii)}]
One has \(p_1,q_1\geq1\), and either
\[
\Theta(a,b,c;\alpha,\beta)<0,
\]
or
\[
\Theta(a,b,c;\alpha,\beta)=0,
\qquad
q_1\leq q_2,
\qquad
d(p_2,q_2)\leq d(p_1,q_1).
\]
\end{enumerate}

\item[\textup{(b)}] The following assertions are equivalent:
\begin{enumerate}[label=\textup{(\roman*)}]
\item The operator
\(
T_{a,b,c}:
\Lpqa{\alpha}{p_1}{q_1}
\longrightarrow
\Lpqa{\beta}{p_2}{q_2}
\)
is compact.
\item The operator
\(
S_{a,b,c}:
\Lpqa{\alpha}{p_1}{q_1}
\longrightarrow
\Lpqa{\beta}{p_2}{q_2}
\)
is compact.
\item One has $p_1,q_1\geq1$, and either
\[
\Theta(a,b,c;\alpha,\beta)<0,
\]
or
\[
\Theta(a,b,c;\alpha,\beta)=0,\qquad
q_1<q_2,\qquad
d(p_2,q_2)<d(p_1,q_1).
\]
\end{enumerate}
\end{theorem}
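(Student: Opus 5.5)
The plan is to reduce everything to Theorems~\ref{thm:main-boundedness} and~\ref{thm:main-compactness} through the factorization \eqref{eq:gaussian-twisted-factorization} and Lemma~\ref{lem:dilation-multiplier}. Put \(\lambda=\abs b/c\) and \(\omega=b/\abs b\). Since \(a<\beta/2\), Lemma~\ref{lem:dilation-multiplier} shows that
\[
U:=U_{\lambda,a,\omega}:L^{p_2,q_2}_{\beta_*}\longrightarrow L^{p_2,q_2}_{\beta}
\]
is a topological isomorphism for every \(0<p_2,q_2\le\infty\). Its inverse is again of the same type.

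\textbf{Step 1: translate mapping properties.} The first thing to record is that the identities \(S_{a,b,c}=\frac{\pi}{c}UP_c\) and \(T_{a,b,c}=\frac{\pi}{c}UP_c^+\) hold on bounded compactly supported functions. From this we read off the transfer.

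\begin{itemize}
\item \(S_{a,b,c}\) is bounded from \(L^{p_1,q_1}_\alpha\) into \(L^{p_2,q_2}_\beta\) if and only if \(P_c=\frac{c}{\pi}U^{-1}S_{a,b,c}\) is bounded from \(L^{p_1,q_1}_\alpha\) into \(L^{p_2,q_2}_{\beta_*}\).
\item The same equivalence holds for \(T_{a,b,c}\) and \(P_c^+\).
\item Because composition with an isomorphism preserves compactness in both directions, the same equivalences hold with ``compact'' in place of ``bounded''.
\end{itemize}

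One technical point needs care. The operators are initially defined only on bounded compactly supported functions, and in the quasi-Banach setting boundedness means the unique continuous extension from that class. I will check that this class is dense in \(L^{p_1,q_1}_\alpha\) whenever \(p_1,q_1<\infty\). When \(p_1\) or \(q_1\) is infinite, I will instead use the absolute-convergence argument from the proof of Proposition~\ref{prop:boundedness-sufficient}. In both situations the extensions are then intertwined by \(U\).

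\textbf{Step 2: apply the main theorems with target parameter \(\beta_*\).} Theorem~\ref{thm:main-boundedness}, applied with \(t=c\) and target parameter \(\beta_*\), shows that (i), (ii) and the condition below are equivalent:
\[
p_1,q_1\ge1,\qquad \Delta(c;\alpha,\beta_*)<0,
\]
or else
\[
\Delta(c;\alpha,\beta_*)=0,\qquad q_1\le q_2,\qquad d(p_2,q_2)\le d(p_1,q_1).
\]
By \eqref{eq:gaussian-twisted-barrier}, \(\Delta(c;\alpha,\beta_*)\) equals \(\Theta(a,b,c;\alpha,\beta)\) times the positive factor \(c^2/\abs b^2\), so the two have the same sign. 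To confirm the identity, expand
\[
c^2-2\beta_*c+\alpha\beta_*=\frac{c^2}{\abs b^2}\bigl(\abs b^2-2c(\beta-2a)+\alpha(\beta-2a)\bigr),
\]
which is exactly \(\frac{c^2}{\abs b^2}\Theta\). This gives (iii) in part (a). Part (b) follows identically from Theorem~\ref{thm:main-compactness}.

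\textbf{Main obstacle.} The only delicate point is making rigorous that the factorization and its inverse respect the initial domains and the resulting extensions. I expect to handle this with the locally uniform absolute convergence established earlier, together with \(\abs{S_{a,b,c}f}\le T_{a,b,c}\abs f\).
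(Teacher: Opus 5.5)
Your proposal is correct and follows the paper's proof. Both arguments use the factorizations $S_{a,b,c}=\frac{\pi}{c}U_{\lambda,a,\omega}P_c$ and $T_{a,b,c}=\frac{\pi}{c}U_{\lambda,a,\omega}P_c^+$ together with the isomorphism of Lemma~\ref{lem:dilation-multiplier} to reduce both parts to Theorems~\ref{thm:main-boundedness} and~\ref{thm:main-compactness} with target parameter $\beta_*$, and then use the identity $\Delta(c;\alpha,\beta_*)=\frac{c^2}{\abs{b}^2}\Theta(a,b,c;\alpha,\beta)$ to convert the conditions. Your extra care about initial domains and extensions goes beyond what the paper writes, but it is harmless: the factorization is a pointwise identity wherever the defining integrals converge absolutely, so boundedness on the initial class transfers in both directions without any density argument.
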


\begin{proof}
By Lemma~\ref{lem:dilation-multiplier},
\[
U_{\lambda,a,\omega}:
L_{\beta_*}^{p_2,q_2}
\longrightarrow
L_\beta^{p_2,q_2}
\]
is a topological isomorphism. Hence
\eqref{eq:gaussian-twisted-factorization} reduces the boundedness and
compactness of $T_{a,b,c}$ and $S_{a,b,c}$ to those of $P_c^+$ and
$P_c$, respectively, as operators
\[
P_c^+,\ P_c:
L_\alpha^{p_1,q_1}
\longrightarrow
L_{\beta_*}^{p_2,q_2}.
\]
Parts~\textup{(a)} and~\textup{(b)} therefore follow from
Theorems~\ref{thm:main-boundedness} and
\ref{thm:main-compactness}, respectively, together with
\eqref{eq:gaussian-twisted-barrier}.
\end{proof}

\begin{remark}
The phase of \(b\) is absorbed by the rotation
\(\omega= \frac{b}{\abs b}\), so the mapping conditions depend only on
\(\abs b\). When \(a\neq0\), the Gaussian multiplier
\(\e^{a\abs z^2}\) generally destroys holomorphy; thus
\(S_{a,b,c}\) is naturally regarded as an operator into the ambient
mixed-norm space.
\end{remark}

%------------------------------------------------

\medskip
\noindent
As a first consequence, consider the Berezin transform, a classical
tool in Fock-space function and operator theory; see
\cite{Zhu2012}.
For \(t>0\), define the Berezin transform by
\[
(B_tf)(z)
:=
\frac{t}{\pi}
\int_{\C}
\e^{-t\abs{z-w}^2}f(w)\,dA(w).
\]
Since
\[
B_t=\frac{t}{\pi}T_{-t,2t,t},
\qquad
\Theta(-t,2t,t;\alpha,\beta)
=
\alpha\beta+2\alpha t-2\beta t,
\]
Theorem~\ref{thm:gaussian-twisted} yields the following boundedness
and compactness classification. In the Banach range, the boundedness
part recovers \cite[Corollary~5.2]{LiuShiWei2027}.

\begin{corollary}[Berezin transforms]
\label{cor:berezin-transform}
Let $t,\alpha,\beta>0$ and
\(
0<p_1,q_1,p_2,q_2\leq\infty.
\)
Then the following assertions hold.

\begin{enumerate}[label=\textup{(\alph*)}]
\item The Berezin transform
\(
B_t:
\Lpqa{\alpha}{p_1}{q_1}
\longrightarrow
\Lpqa{\beta}{p_2}{q_2}
\)
is bounded if and only if $p_1,q_1\geq1$ and either
\[
\alpha\beta+2\alpha t-2\beta t<0,
\]
or
\[
\alpha\beta+2\alpha t-2\beta t=0,
\qquad
q_1\leq q_2,
\qquad
d(p_2,q_2)\leq d(p_1,q_1).
\]

\item The Berezin transform
\(
B_t:
\Lpqa{\alpha}{p_1}{q_1}
\longrightarrow
\Lpqa{\beta}{p_2}{q_2}
\)
is compact if and only if $p_1,q_1\geq1$ and either
\[
\alpha\beta+2\alpha t-2\beta t<0,
\]
or
\[
\alpha\beta+2\alpha t-2\beta t=0,
\qquad
q_1<q_2,
\qquad
d(p_2,q_2)<d(p_1,q_1).
\]
\end{enumerate}
\end{corollary}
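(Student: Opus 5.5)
The plan is to deduce Corollary~\ref{cor:berezin-transform} directly from Theorem~\ref{thm:gaussian-twisted} by identifying \(B_t\) as a positive multiple of one of the Gaussian-twisted operators \(T_{a,b,c}\).

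\emph{Step 1: the identity \(B_t=\frac{t}{\pi}T_{-t,2t,t}\).} Expand the Gaussian:
\[
\abs{z-w}^2=\abs z^2-2\operatorname{Re}(z\overline w)+\abs w^2 .
\]
This gives
\[
\e^{-t\abs{z-w}^2}=\e^{-t\abs z^2}\e^{\operatorname{Re}(2tz\overline w)-t\abs w^2}.
\]
Comparing with the definition of \(T_{a,b,c}\) yields the identity with \(a=-t\), \(b=2t\neq0\) and \(c=t>0\). Since \(\frac{t}{\pi}>0\), boundedness and compactness of \(B_t\) coincide with those of \(T_{-t,2t,t}\).

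\emph{Step 2: the hypotheses of Theorem~\ref{thm:gaussian-twisted} hold.} We need \(a<\frac{\beta}{2}\), and here \(a=-t<0<\frac{\beta}{2}\) for every \(\beta>0\). So no restriction on \(\beta\) arises.

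\emph{Step 3: compute \(\Theta\).}
\[
\Theta(-t,2t,t;\alpha,\beta)=4t^2+(\alpha-2t)(\beta+2t)=\alpha\beta+2\alpha t-2\beta t .
\]
Substituting this into parts (a)(iii) and (b)(iii) of Theorem~\ref{thm:gaussian-twisted}, through the equivalences (i)\(\Leftrightarrow\)(iii), gives assertions (a) and (b) verbatim.

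There is no genuine obstacle. The only points to check are the sign conventions in the expansion of \(\abs{z-w}^2\), which must produce \(+\operatorname{Re}(2tz\overline w)\), and the hypothesis \(a<\beta/2\), which holds automatically since \(a<0\).
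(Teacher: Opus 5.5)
Your proposal is correct and is essentially the paper's own argument: identify $B_t=\frac{t}{\pi}T_{-t,2t,t}$, compute $\Theta(-t,2t,t;\alpha,\beta)=\alpha\beta+2\alpha t-2\beta t$, and read off both parts from Theorem~\ref{thm:gaussian-twisted}. Your explicit check that the hypothesis $a=-t<\beta/2$ always holds is a detail the paper leaves implicit.
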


\medskip
\noindent
Setting \(a=0\) gives the following Fock-kernel operators. For
\(c>0\) and \(b\in\C\setminus\{0\}\), define
\[
(P_{c,b}f)(z)
:=
\frac{c}{\pi}
\int_{\C}
\e^{bz\overline w-c\abs w^2}f(w)\,dA(w)
\quad \text{and} \quad
(P_{c,b}^+f)(z)
:=
\frac{c}{\pi}
\int_{\C}
\e^{\operatorname{Re}(bz\overline w)-c\abs w^2}
f(w)\,dA(w).
\]
Then
\[
P_{c,b}
=
\frac{c}{\pi}S_{0,b,c},
\qquad
P_{c,b}^+
=
\frac{c}{\pi}T_{0,b,c},
\qquad 
\Theta(0,b,c;\alpha,\beta)
=
\abs b^2-\beta(2c-\alpha).
\]
Theorem~\ref{thm:gaussian-twisted} gives the following result.

\begin{corollary}[Fock-kernel operators]
\label{cor:fock-kernel-operators}
Let
\(
\alpha,\beta,c>0\),
\(
b\in\C\setminus\{0\},
\)
and let
\(
0<p_1,q_1,p_2,q_2\leq\infty.
\)

\textup{(a)} The following assertions are equivalent:
\begin{enumerate}
\item[\textup{(i)}]
The operator
\(
P_{c,b}^+:
L_\alpha^{p_1,q_1}
\longrightarrow
L_\beta^{p_2,q_2}
\)
is bounded.

\item[\textup{(ii)}]
The operator
\(
P_{c,b}:
L_\alpha^{p_1,q_1}
\longrightarrow
L_\beta^{p_2,q_2}
\)
is bounded.

\item[\textup{(iii)}]
One has \(p_1,q_1\geq1\), and either
\[
\abs b^2<\beta(2c-\alpha),
\]
or
\[
\abs b^2=\beta(2c-\alpha),
\qquad
q_1\leq q_2,
\qquad
d(p_2,q_2)\leq d(p_1,q_1).
\]
\end{enumerate}

\textup{(b)} 
The following assertions are equivalent:
\begin{enumerate}[label=\textup{(\roman*)}]
\item The operator \(P_{c,b}^+: \Lpqa{\alpha}{p_1}{q_1} \to \Lpqa{\beta}{p_2}{q_2} \) is compact.
\item The operator \(P_{c,b}: \Lpqa{\alpha}{p_1}{q_1} \to \Lpqa{\beta}{p_2}{q_2}\) is compact.
\item One has \(p_1,q_1\geq1\), and either
\[
\abs b^2<\beta(2c-\alpha),
\]
or
\[
\abs b^2=\beta(2c-\alpha),
\qquad
q_1 < q_2,
\qquad
d(p_2,q_2) < d(p_1,q_1).
\]
\end{enumerate}
\end{corollary}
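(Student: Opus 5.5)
This corollary should follow by specializing Theorem~\ref{thm:gaussian-twisted} to \(a=0\); no new analysis is needed. First I will record the identities \(P_{c,b}=\frac{c}{\pi}S_{0,b,c}\) and \(P_{c,b}^+=\frac{c}{\pi}T_{0,b,c}\), which come straight from the definitions of \(S_{a,b,c}\) and \(T_{a,b,c}\). The factor \(\frac{c}{\pi}\) is a positive constant, so multiplying by it changes neither boundedness nor compactness. Hence assertions (i) and (ii) in parts (a) and (b) are equivalent to the corresponding assertions for \(T_{0,b,c}\) and \(S_{0,b,c}\).

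Next I will check the hypotheses of Theorem~\ref{thm:gaussian-twisted}. The standing assumption there is \(a<\frac{\beta}{2}\), which becomes \(0<\frac{\beta}{2}\) and holds because \(\beta>0\). The other hypotheses are \(c>0\), \(b\neq0\) and \(\alpha>0\), all of which are assumed in the corollary.

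Then I will compute the balance quantity: \(\Theta(0,b,c;\alpha,\beta)=\abs b^2+(\alpha-2c)\beta=\abs b^2-\beta(2c-\alpha)\). The condition \(\Theta<0\) is therefore exactly \(\abs b^2<\beta(2c-\alpha)\), and \(\Theta=0\) is exactly \(\abs b^2=\beta(2c-\alpha)\). The endpoint inequalities on \(q_1,q_2\) and on \(d(p_j,q_j)\) carry over unchanged: non-strict for boundedness in part (a), strict for compactness in part (b). Substituting these into parts (a)(iii) and (b)(iii) of Theorem~\ref{thm:gaussian-twisted} gives the stated conditions.

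There is no real obstacle here. The only points to handle are that the scalar normalization \(\frac{c}{\pi}\) is harmless and that the hypothesis \(a<\beta/2\) is automatic when \(a=0\).
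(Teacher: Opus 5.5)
Your proposal is correct and follows the paper's route exactly. The paper obtains this corollary by setting $a=0$ in Theorem~\ref{thm:gaussian-twisted}, using $P_{c,b}=\frac{c}{\pi}S_{0,b,c}$, $P_{c,b}^+=\frac{c}{\pi}T_{0,b,c}$ and $\Theta(0,b,c;\alpha,\beta)=\abs b^2-\beta(2c-\alpha)$, with the standing condition $a<\beta/2$ holding automatically.
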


\medskip
\noindent
When \(b=c\), one has
\[
P_{c,b}=P_c,
\qquad
P_{c,b}^+=P_c^+,
\qquad
\abs b^2-\beta(2c-\alpha)
=
\Delta(c;\alpha,\beta).
\]
Thus Corollary~\ref{cor:fock-kernel-operators} recovers
Theorems~\ref{thm:main-boundedness} and
\ref{thm:main-compactness}.

\bigskip
\noindent 
\textbf{Funding.} X.~F. is supported by a grant from the National Science and Technology Council,
Taiwan (NSTC 114-2115-M-A49-003-MY3).

\bigskip
\noindent
\textbf{Declaration of competing interests.} The authors declare that they have no competing interests.

\bigskip
\noindent
\textbf{Data availability.}
No data were used for the research described in this article.

\bigskip
\noindent
\textbf{Use of artificial intelligence.}
The authors used artificial-intelligence tools for language editing,
\LaTeX\ formatting, and limited assistance with local mathematical
reasoning.  The proof strategy and mathematical development are the
authors' own; all mathematical content was independently written and checked by the authors, who take full responsibility for it.

%================================================

%================================================
\end{document}